\numberwithin{equation}{section}
\providecommand{\C}[1]{\mathcal{#1}}
\providecommand{\D}[1]{\mathbb{#1}}
\theoremstyle{plain}
\newtheorem{thm}{Theorem}
\newtheorem{lem}[thm]{Lemma}
\newtheorem{prop}[thm]{Proposition}
\theoremstyle{definition}
\newtheorem{rem}[thm]{Remark}
\newtheorem*{assumps*}{Assumptions}
\newtheorem*{nots*}{Notations}
\newtheorem*{rem*}{Замечание}
\def \ve{\varepsilon}
\begin{document}
\title
{Homogenization of boundary value problems for monotone operators
in perforated domains with rapidly oscillating  boundary
conditions of Fourier type}
\author
{ A. Piatnitski$^\flat$, V. Rybalko$^\sharp$}

\maketitle

\parskip=0.1truein
{\footnotesize
\begin{center}
$^\flat$
$^\sharp$ Narvik University College\\
Postboks 385, 8505 Narvik, Norway\\
and\\
P.N.Lebedev Physical Institute of RAS,\\
Leninski pr., 53, Moscow 117924, Russia \\
{\tt andrey@sci.lebedev.ru}
\end{center}
\begin{center}
Department of Mathematics,\\
B. Verkin Institute
for Low Temperature Physics
and Engineering (FTINT)\\
47 Lenin Ave., Kharkov 61103, Ukraine\\
{\tt  vrybalko@ilt.kharkov.ua}
\end{center}
}

\bigskip
\begin{abstract}
The paper deals with homogenization problem for nonlinear elliptic
and parabolic equations in a periodically perforated domain, a nonlinear Fourier boundary conditions being imposed on the perforation border. Under the assumptions that the studied differential equation satisfies monotonicity  and 2-growth conditions and that   the coefficient of the boundary operator is centered at each level set of unknown function, we show that the problem under consideration admits homogenization and derive the effective model.
\end{abstract}

\section{Introduction}

This paper addresses the homogenization of the boundary value
problem
\begin{equation}
\begin{cases}
-{\rm div}\, a(Du_\ve, x/\ve)+\lambda u_\ve=f\ \text{in}\ \Omega_\ve\\
a(Du_\ve, x/\ve)\cdot\nu=0\ \text{on}\ \partial \Omega\\
a(Du_\ve, x/\ve)\cdot\nu=g(u_\ve,x/\ve)\ \text{on}\ S_\ve,
\end{cases}
\label{eq1}
\end{equation}
where $\Omega_\ve$ is a bounded periodically perforated domain in
$\D{R}^N$ ($N\geq 2$), $\ve>0$ is a small parameter referred to
the perforation period. The boundary of $\Omega_\ve$ consists
of two parts, namely, the fixed outer boundary $\partial\Omega$,
and the boundary of perforations $S_\ve$. We assume that the domain is not perforated in a small (of order $\ve$) neighbourhood of $\partial\Omega$
so that the perforation boundary $S_\ve$ and $\partial\Omega$ are disjoint.
The coefficients
$a=(a_1,\dots,a_N)$ in the equation and the function $g$ in the
boundary condition on $S_\ve$ are strongly oscillating (with the
period $\ve$) functions. The boundary condition on $S_\ve$
includes, as a particular case, the inhomogeneous Neumann boundary
condition of the form $a(Du_\ve, x/\ve)\cdot\nu=\alpha(x/\ve)$ and
the Fourier one, $a(Du_\ve,
x/\ve)\cdot\nu=\beta(u_\ve,x/\ve)u_\ve$. Along with the
stationary problem (\ref{eq1}) we also consider the parabolic
problem
\begin{equation}
\begin{cases}
\partial_t u_\ve -{\rm div}\, a(Du_\ve, x/\ve)=f\ \text{in}\
\Omega_\ve\times\{t>0\}\\
a(Du_\ve, x/\ve)\cdot\nu=0\ \text{on}\ \partial \Omega\\
a(Du_\ve, x/\ve)\cdot\nu=g(u_\ve,x/\ve)\ \text{on}\ S_\ve\\
u_\ve=\tilde u\ \text{for}\ t=0.
\end{cases}
\label{eq2}
\end{equation}

The linear elliptic equations in perforated domains with the
Fourier boundary condition on the boundary of perforations were
considered, e.g., in \cite{CD}, \cite{CS}, \cite{ADH}, \cite{BCP},
\cite{OS}, \cite{OS1}, \cite{P}. It was shown that if the
coefficient in the Fourier boundary condition is small (of order
$\ve$), or the volume fraction of the holes vanishes at a certain
rate, as $\ve\to0$, then the asymptotic behaviour of solutions to
these equations is described in terms of a homogenized problem with
an additional potential. By contrast, if the volume fraction of the
holes does not vanish as the period of the structure tends to zero, then the
dissipative Fourier boundary condition forces solutions vanish.

In the problem studied in the present work the surface
measure $|S_\ve|$ tends to infinity as $\ve\to 0$. To compensate
this measure grows we assume that the average of the function
$g(u,x/\ve)$ (appearing in the boundary condition on $S_\ve$) over
the boundary of each hole is zero for any $u\in\D{R}$.

Previously,  linear
problems with the same assumptions on the coefficient in the Fourier boundary
condition were considered in \cite{BCP1}; related spectral problems
were  studied in \cite{P1},\cite{P2}. The corresponding
homogenized operator is shown to contain an additional potential, this potential is always negative.

A variational problem closely related to (\ref{eq1}) for a
functional with a bulk energy and a surface term on the
perforation boundary was studied in \cite{CP} by means of
$\Gamma$-convergence technique.

In contrast to \cite{CP} we do not assume that the problem under consideration can be written in variational form. Instead, we assume
the monotonicity of $a(\xi,y)$ and apply here the celebrated
two-scale convergence method (see, e.g. \cite{N}, \cite{A},
\cite{LNW}). This allows us to treat boundary value problems that can not be reduced to the minimization of an energy functional; for
instance, such a reduction is not possible in the case of linear function $a(\xi,y)$, $a(\xi,y)=A(y)\xi$,  with nonsymmetric matrix $A$.

Since, in general, the monotonicity assumption on $a(\xi,y)$ does
not imply the monotonicity of the problem (\ref{eq1}) (even for
large $\lambda$) we are not able to show the uniqueness result for
(\ref{eq1}). Moreover, the existence of a solution of (\ref{eq1})
holds only for sufficiently large $\lambda$ (see the discussion in
\cite{CP}), while the parabolic problem (\ref{eq2}) does have a
unique solution under certain assumptions on $a(\xi,y)$ and
$g(u,y)$.

The key difficulty in applying the two-scale convergence
theory to the homogenization of (\ref{eq1}) and (\ref{eq2}) is due
to the presence of a highly perturbed surface integral in the weak
formulations of the said problems. To pass to the limit in the
surface integral we establish a new result related to the
two-scale convergence of traces, see Proposition
\ref{convergtraces}.

The main result of this work shows that solutions $u_\ve$ of
problem (\ref{eq1}) converge as $\ve\to 0$ to a solution $U_0$ of
the homogenized problem
\begin{equation}
\begin{cases}
{\rm div}\, a^*(DU_0,U_0)+
b^*(DU_0,U_0)+|Y^*|(f-\lambda U_0)=0\ \text{in}\ \Omega\\
a^*(DU_0,U_0)\cdot\nu=g^*(U_0)\cdot\nu\ \text{on}\ \partial
\Omega.
\end{cases}
\label{homogenizedeq1}
\end{equation}
The coefficients $a^*$, $b^*$ are defined in terms of a cell
problem (see problem (\ref{celleq})) and depend both on the
coefficients $a=(a_1,\dots,a_N)$ in the equation in (\ref{eq1})
and on the function $g$ in the boundary condition on $S_\ve$. It
is interesting to observe also that
the homogenization of (\ref{eq1}) leads to the change of the
boundary condition on $\partial \Omega$ from the homogeneous
Neumann condition to a Fourier type one.

In what concerns the parabolic problem (\ref{eq2}), we show that
solutions $u_\ve$ of (\ref{eq2}) converge as $\ve\to 0$ to a
solution $U_0$ of the homogenized problem
\begin{equation}
\begin{cases}
|Y^*|\partial_t U_0-{\rm div}\, a^*(DU_0,U_0)-b^*(DU_0,U_0)
=|Y^*|f\ \text{in}\ \Omega\times \{t>0\}\\
a^*(DU_0,U_0)\cdot\nu=g^*(U_0)\cdot\nu\ \text{on}\ \partial \Omega
\\U_0=\tilde u\ \text{when}\ t=0.
\end{cases}
\label{homogenizedeq2}
\end{equation}
The analysis of (\ref{eq2}) involves the same ideas as that of
(\ref{eq1}) combined with a lower semicontinuity trick already
used in the parabolic problems in \cite{CP1}, \cite{CP2},
\cite{ClS}, \cite{PPR}.

An interesting issue in both parabolic and elliptic frameworks is
the uniqueness of a solution of the limit problem. The limit operator, although admits a priory estimates, need not be monotone even for large values of $\lambda$. The main difficulty is due to the fact that the first order term $b^*(D u,u)$ in the limit equation couples the unknown function $u$ and its gradient.\\
The uniqueness is proved only for small space dimensions and in the case when
either $a(\xi,y)$ is linear in $\xi$ or $g(u,y)$ is linear in $u$.
Without these additional assumptions it remains an open problem.

The paper is organized as follows. Section \ref{prasentationofmainresults}
is devored to problem setup and formulation of the main results.

Sections \ref{s_stationary}--\ref{section5} deal with the elliptic case.
In Section \ref{s_stationary} we prove the two-scale convergence result which relies on several technical statements. These technical statements are then justified in Sections  \ref{section4} and \ref{section5}.

Section \ref{section6} considers the parabolic case.

Finally, in Section \ref{section7} we study the properties of the homogenized
problems.

\section{Presentation of main results}
\label{prasentationofmainresults}

Let $Y$ be the unit cube $Y=[-1/2,1/2)^N$ ($N\geq 2$), and let $G$
be an open subset of $Y$ such that $\overline{G}\subset
(-1/2,1/2)^N$, with Lipschitz boundary. Set $Y^*=Y\setminus G$
and
$S=\bigcup_{m\in\D{Z}}(\partial G+m)$.

Given a bounded connected open set $\Omega\subset\D{R}^N$ with Lipschitz
boundary $\partial \Omega$, we consider the perforated domain
$\Omega_\ve$ defined by
$$
\Omega_\ve=\Omega\setminus \bigcup_{m\in I_\ve}(\ve G+m\ve),
\  I_\ve=\{m\in\D{Z}^N; Y_\ve^{(m)}\subset \Omega\},
$$
where $Y_\ve^{(m)}=(Y+m)\ve$. We have
$\partial\Omega_\ve=\partial\Omega\cup S_\ve$, where $S_\ve$ is
the boundary of perforations.

We assume that $a:\D{R}^N\times Y\to \D{R}^N$ and $g:\D{R}\times
S\to\D{R}$ satisfy
\begin{itemize}
\item[(i)\ \ ] $a(\xi,y)$ (resp. $g(u,y)$) is continuous in $\xi$
(resp. $u$), i.e. $a\in C(\D{R}^N; L^\infty(Y))$, $g\in C(\D{R};
L^\infty(S))$, and $Y$-periodic in $y$; \item[(ii)\ ] there is
$\kappa>0$ such that
\begin{equation}
(a(\xi,y)-a(\zeta,y))\cdot(\xi-\zeta)\geq \kappa |\xi-\zeta|^2\
\forall \xi,\zeta\in\D{R}^N; \label{monoton}
\end{equation}
\item[(iii)] there are constants $C_1,\dots,C_8>0$ such that
\begin{equation}
-C_1 +C_2 |\xi|^2\leq a(\xi,y)\cdot \xi,\ |a(\xi,y)|\leq C_3
|\xi|+C_4, \label{3}
\end{equation}
\begin{equation}
|g(u,y)|\leq C_5 |u| +C_6, \label{4}
\end{equation}
\begin{equation}
|g(u,y)-g(v,y)|\leq C_7 |u-v|, \label{5}
\end{equation}
\begin{equation}
|g^\prime_u(u,y)-g^\prime_u(v,y)|\leq C_8 |u-v|(1+|u|+|v|)^{-1};
\label{6}
\end{equation}
\item[(iv)\ ]
\begin{equation}
\int_{S\cap Y}g(u,y) \, {\rm d}\sigma_y=0,\ \forall u\in \D{R}.
\label{7}
\end{equation}
\end{itemize}

Let us rewrite (\ref{eq1}) in an abstract form. To this end
consider the space $X_\ve=W^{1,2}(\Omega_\ve)$ and its dual
$X_\ve^*$ with respect to the duality pairing
$\langle\,\cdot\,,\,\cdot\,\rangle_\ve$ induced by the standard
inner product in $L^2(\Omega_\ve)$.
Define the operators ${\C A}_\ve,\, {\C G}_\ve:X_\ve\to X_\ve^*$
by
\begin{multline}
\langle{\C A}_\ve(u),v \rangle_{\ve} =\int_{\Omega_\ve}a(Du,
x/\ve)\cdot D v {\rm d}x,\ \langle
{\C G}_\ve(u),v\rangle_{\ve}=
\int_{S_\ve}g(u,x/\ve)v{\rm d}\sigma, \\
\forall v\in X^\ve(=W^{1,2}(\Omega_\ve))\label{defAG}.
\end{multline}
In terms of these operators (\ref{eq1}) reads
$$
{\C A}_\ve(u_\ve)+\lambda u_\ve-{\C G}_\ve(u_\ve)=f.
$$
According to the assumptions (i)-(iii) the operator ${\C A}_\ve$
is monotone and continuous while ${\C G}_\ve$ is a compact
operator. It follows that ${\C F}_\ve(u)={\C A}_\ve(u)+\lambda
u-{\C G}_\ve(u)$ ($\lambda>0$) is a bounded continuous and
pseudo-monotone operator (recall that  ${\C F}_\ve:\,X_\ve\to
X_\ve^*$ is pseudo-monotone  if $u^{(i)}\to u$ weakly in $X_\ve$
and $\limsup_{i\to\infty}\langle{\C
F}_\ve(u^{(i)}),u^{(i)}-u\rangle_\ve\leq 0$ imply $\langle{\C
F}_\ve(u),u-v\rangle_\ve\leq \liminf_{i\to\infty}\langle{\C
F}_\ve(u^{(i)}),u^{(i)}-v\rangle_\ve$ for all $v\in X_\ve$). Then
for any $f\in L^2(\Omega)$ problem (\ref{eq1}) has a (possibly not
unique) solution $u_\ve\in X_\ve$ when $\ve\leq\ve_0$,
$\lambda\geq\lambda_0$ (where $\lambda_0,\ve_0>0$ are specified in
Theorem \ref{aprioriestimate} below) by Brezis' theorem (see,
e.g., \cite{S}, Chapter II), thanks to the following coercivity
result
\begin{thm}
\label{th0}
 \label{aprioriestimate} Under
assumptions {\rm (i)}-{\rm (iv)} there are $\lambda_0,\ve_0>0$
such that
\begin{equation}
\langle{\C A}_\ve u+\lambda u-{\C G}_\ve(u),u\rangle_\ve\geq
\kappa_1 \|u\|_{X_\ve}^2-\kappa_2, \label{coercive}
\end{equation}
when $\|u\|_{X_\ve}\geq R$, for some $\kappa_1>0$, $\kappa_2>0$
and $R>0$ independent of $\ve\leq\ve_0$ and
$\lambda\geq\lambda_0$.
\end{thm}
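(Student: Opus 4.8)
The plan is to establish the coercivity estimate \eqref{coercive} by controlling the three terms $\langle\C A_\ve u,u\rangle_\ve$, $\lambda\|u\|_{L^2(\Omega_\ve)}^2$, and $-\langle\C G_\ve(u),u\rangle_\ve$ separately, the first two being ``good'' (they give the positive $\|u\|_{X_\ve}^2$ contribution via the growth condition \eqref{3}) and the third being the only dangerous one that must be absorbed. Specifically, from \eqref{3} we immediately get
\[
\langle\C A_\ve u,u\rangle_\ve+\lambda\|u\|_{L^2(\Omega_\ve)}^2\ \geq\ C_2\|Du\|_{L^2(\Omega_\ve)}^2+\lambda\|u\|_{L^2(\Omega_\ve)}^2-C_1|\Omega|,
\]
so, taking $\lambda\ge\max(C_2,1)$ say, the left-hand side of \eqref{coercive} dominates $\min(C_2,1)\|u\|_{X_\ve}^2 - C_1|\Omega| - \langle\C G_\ve(u),u\rangle_\ve$. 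Everything therefore reduces to showing that the surface term satisfies a bound of the form $|\langle\C G_\ve(u),u\rangle_\ve|\le \tfrac12\min(C_2,1)\|u\|_{X_\ve}^2 + \kappa_2'$ for all $\ve\le\ve_0$, with constants independent of $\ve$.

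The heart of the argument is exploiting the zero-mean assumption \eqref{7}, which is exactly what compensates the blow-up $|S_\ve|\sim\ve^{-1}$. First I would split $\langle\C G_\ve(u),u\rangle_\ve=\int_{S_\ve}g(u,x/\ve)u\dd\sigma$ over the individual scaled holes $\ve(\partial G+m)$, $m\in I_\ve$. On each hole, because $\int_{S\cap Y}g(v,y)\dd\sigma_y=0$ for every fixed $v$, we may write $\int_{\ve(\partial G+m)}g(u,x/\ve)u\dd\sigma=\int_{\ve(\partial G+m)}\big(g(u,x/\ve)-g(\bar u_m,x/\ve)\big)u\dd\sigma + \int_{\ve(\partial G+m)}g(\bar u_m,x/\ve)(u-\bar u_m)\dd\sigma$, where $\bar u_m$ is the average of $u$ over the corresponding cell (or over $\ve(\partial G+m)$). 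The Lipschitz bound \eqref{5} handles the first piece by $C_7\int|u-\bar u_m|\,|u|$, and the bound \eqref{4} together with cancellation of the mean handles the second by $(C_5|\bar u_m|+C_6)\int|u-\bar u_m|$. In both pieces the key quantity is the oscillation $\int_{\ve(\partial G+m)}|u-\bar u_m|\dd\sigma$, which by a rescaled trace/Poincaré inequality on the reference cell $Y^*$ is bounded by $C\ve\|Du\|_{L^2(\ve Y^{(m)})}$ (the one power of $\ve$ on the right comes from scaling of the $L^2$ norms and the surface measure, and exactly cancels the $\ve^{-1}$ worth of holes). Summing over $m\in I_\ve$ with Cauchy--Schwarz and using $\sum_m|\bar u_m|^2\le C\|u\|_{L^2(\Omega_\ve)}^2$ and $\sum_m\|Du\|_{L^2(\ve Y^{(m)})}^2\le\|Du\|_{L^2(\Omega_\ve)}^2$ then yields
\[
|\langle\C G_\ve(u),u\rangle_\ve|\ \le\ C\,\|Du\|_{L^2(\Omega_\ve)}\,\big(\|u\|_{L^2(\Omega_\ve)}+1\big)\ \le\ C\|Du\|_{L^2(\Omega_\ve)}^2+C\|u\|_{L^2(\Omega_\ve)}^2+C,
\]
but with a constant $C$ that does \emph{not} shrink — so a final refinement is needed: repeating the estimate of the oscillation term with an extra small factor (e.g. Young's inequality $\ve\|Du\|\cdot\|u\|\le\delta\|Du\|^2+\delta^{-1}\ve^2\|u\|^2$, or more carefully tracking that the trace constant times $\ve$ can be made $\le\delta$ for $\ve\le\ve_0(\delta)$) gives the required $\delta$-smallness in front of $\|Du\|^2$ and $\|u\|^2$ at the cost of enlarging $\lambda_0$ and $\kappa_2$.

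The main obstacle I anticipate is precisely making the surface term's constant small rather than merely bounded: a naive trace inequality only gives $\int_{S_\ve}|v|^2\dd\sigma\le C(\|v\|^2_{L^2(\Omega_\ve)}+\ve^2\|Dv\|^2_{L^2(\Omega_\ve)})$ with a fixed $C$, which is not enough to beat an order-one coefficient $C_5$ in \eqref{4} for all $u$ — without the mean-zero condition \eqref{7} the estimate would genuinely fail (and indeed in the classical Fourier-condition literature cited in the introduction this is exactly where the nontrivial effective potential, of a definite sign, appears). So the whole weight of the proof sits on carrying out the per-cell splitting above so that $g$ is always evaluated against a \emph{difference} $u-\bar u_m$, never against $u$ itself at full size, and on the rescaled Poincaré inequality on $Y^*$ that converts this difference into $\ve\|Du\|$. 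Once those two ingredients are in place, choosing $\delta$ small (hence $\lambda_0$ large, $\ve_0$ small) and collecting terms gives \eqref{coercive} with $\kappa_1=\tfrac12\min(C_2,1)$, $\kappa_2$ depending only on $C_1,\dots,C_6$, $|\Omega|$ and the geometry of $Y^*$, and $R$ chosen so that the additive constants are dominated by $\tfrac{\kappa_1}{2}\|u\|_{X_\ve}^2$ once $\|u\|_{X_\ve}\ge R$. All constants are manifestly independent of $\ve\le\ve_0$ and $\lambda\ge\lambda_0$, as claimed.
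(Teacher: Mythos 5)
Your proposal is correct in substance, and its central estimate of the surface term is the same as the paper's: splitting $\int_{S_\ve}g(u,x/\ve)\,u\,{\rm d}\sigma$, cell by cell, into the Lipschitz part $(g(u,\cdot)-g(\bar u,\cdot))u$ and the mean-zero part $g(\bar u,\cdot)(u-\bar u)$ --- which is exactly where \eqref{7} enters --- and estimating these via \eqref{5}, \eqref{4} and the rescaled Poincar\'e/trace inequalities \eqref{rescaledpoincare}--\eqref{rescaledtracemean} is precisely the decomposition $I_1+I_2$ in \eqref{bp1}, with the bounds \eqref{bp2}, \eqref{bp3}. Where you genuinely differ is in how the conclusion is drawn: you absorb the resulting bound $|\langle{\C G}_\ve(u),u\rangle_\ve|\le C\|Du\|_{L^2}(\|u\|_{L^2}+1)+C\ve\|Du\|_{L^2}^2$ directly by Young's inequality, taking $\ve_0$ small to control the $\ve\|Du\|^2$ term and $\lambda_0$ large to swallow the $L^2$ contribution, whereas the paper argues by contradiction: it normalizes $w_k=v_k/\|v_k\|_{W^{1,2}(\Omega)}$, lets $\lambda_k\to\infty$, arrives at \eqref{aprioriestimate1}, and uses the compact embedding $W^{1,2}(\Omega)\subset L^2(\Omega)$ plus the structure of the perforation to force $\|w_k\|_{W^{1,2}(\Omega)}\to 0$, contradicting $\|w_k\|_{W^{1,2}(\Omega)}=1$; it then upgrades \eqref{coercive} to all $u$ via the byproduct bound \eqref{boundforG1}. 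Your direct route is the quantitative version of that byproduct and yields explicit constants, at the price of doing the bookkeeping yourself; the paper's compactness argument avoids tracking constants but needs the extension operator and the limit identity $\int_{\Omega_{\ve_k}}w_kv\to|Y^*|\int_\Omega wv$.

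Two points should be tightened. First, the per-cell scaling: the oscillation bound is $\int_{\ve(\partial G+m)}|u-\bar u_m|\,{\rm d}\sigma\le C\ve^{N/2}\|Du\|_{L^2(\ve(Y^*+m))}$, not $C\ve\|Du\|$, and the number of cells is of order $\ve^{-N}$ (it is the total surface measure, not the number of holes, that is of order $\ve^{-1}$); with these exponents the Cauchy--Schwarz summation reproduces exactly \eqref{rescaledpoincare}, \eqref{rescaledtracemean} and your stated aggregate bound, so the slip is harmless but should be fixed (likewise $\sum_m|\bar u_m|^2$ is of order $\ve^{-N}\|u\|^2_{L^2}$, and the weights must be kept). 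Second, be precise about where largeness of $\lambda$, as opposed to smallness of $\ve$, is indispensable: the dangerous cross term $C\|Du\|_{L^2}\|u\|_{L^2}$, produced by $C_5|\bar u_m|$ against the oscillation, carries no factor of $\ve$ and cannot be given a small coefficient in front of both $\|Du\|^2$ and $\|u\|^2$; Young gives $\delta\|Du\|^2+C\delta^{-1}\|u\|^2_{L^2}$, and the large $\|u\|^2_{L^2}$ coefficient is beaten only by $\lambda\ge\lambda_0$. Your phrase ``at the cost of enlarging $\lambda_0$'' carries this, but the suggestion that the trace constant times $\ve$ provides the smallness does not apply to this term; once stated this way, the argument closes and gives \eqref{coercive} for all $u$, with $R$ not even needed.
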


Under the above assumptions on the perforated domain $\Omega_\ve$
there is a bounded linear extension operator
$P_\ve:W^{1,2}(\Omega_\ve)\to W^{1,2}(\Omega)$ ($P_\ve v=v$ in
$\Omega_\ve$ for any $v\in W^{1,2}(\Omega_\ve)$) and $\|P_\ve
v\|_{W^{1,2}(\Omega)}\leq C\|v\|_{W^{1,2}(\Omega_\ve)}$, $\|P_\ve
v\|_{L^2(\Omega)}\leq C\|v\|_{L^2(\Omega_\ve)}$
 with $C$ independent of $\ve$ (see,e.g. \cite{ACPDMP}). We keep the
notation $u_\ve$ for the solution of (\ref{eq1}) extended to
$\Omega_\ve$ ($u_\ve=P_\ve u_\ve$) and study the asymptotic
behavior of $u_\ve$ as $\ve\to 0$.

 The first main result of this work is
\begin{thm}
\label{th1}
Assume that conditions {\rm (i)}-{\rm (iv)} are
satisfied and $f$ in (\ref{eq1}) belongs to $L^2(\Omega)$. Let
$\lambda_0>0$ be as in Theorem \ref{aprioriestimate}. Then for any
$\lambda\geq \lambda_0$, solutions $u_\ve$ of (\ref{eq1}) and
their derivatives $Du_\ve$ two-scale converge as $\ve\to 0$ (up to
extracting a subsequence) to $U_0(x)$ and $DU_0(x)+D_yU_1(x,y)$,
where the pair $U_0(x)$, $U_1(x,y)$ is a solution of the two-scale
homogenized problem: find $U_0(x)\in W^{1,2}(\Omega)$,
$U_1(x,y)\in L^2(\Omega; W^{1,2}_{per}(Y))$ such that
\begin{multline}
\int_\Omega\int_{Y^*} (a(DU_0+D_yU_1,y)\cdot
(D\Phi_0+D_y\Phi_1){\rm d}y{\rm d}x
\\
-\int_\Omega\int_{S\cap Y} (g(U_0,y)\Phi_1(x,y)+
g^\prime_u(U_0,y)\Phi_0U_1(x,y)){\rm d}\sigma_y{\rm d}x
\\
-\int_\Omega\int_{S\cap Y} D_x (g(U_0,y)\Phi_0)\cdot y {\rm
d}\sigma_y{\rm d}x
 -\int_\Omega
|{Y^*}|(f-\lambda U_0)\Phi_0{\rm d}x= 0, \label{varequality}
\end{multline}
for any $\Phi_0(x)\in W^{1,2}(\Omega)$, $\Phi_1(x,y)\in
L^2(\Omega; W^{1,2}_{per}(Y))$. In particular, $u_\ve$ converge
weakly in $W^{1,2}(\Omega)$ to a solution $U_0$ of the homogenized
problem (\ref{homogenizedeq1}), where $a^*(\xi,u)$, $b^*(\xi,u)$,
$g^*(u)$ are defined by
\begin{equation}
a^*(\xi,u)=\int_{Y^*}a(\xi+D_y w,y){\rm d} y,  \label{astar}
\end{equation}
\begin{equation}
b^*(\xi,u)=\int_{S\cap Y}g^\prime_u(u,y)w{\rm d} \sigma_y,
\label{bstar}
\end{equation}
\begin{equation}
g^*(u)=\int_{Y^*}g(u,y)y{\rm d}\sigma_y,  \label{gstar}
\end{equation}
and $w=w(y;\xi,u)$ is a unique (up to an additive constant)
solution of the cell problem
\begin{equation}
\begin{cases}
{\rm div}\, a(\xi+D_y w,y)=0\ \text{in}\ Y^*\\
a(\xi+D_y w,y)\cdot\nu=g(u,y)\ \text{on}\ S\cap Y\\
w\ \text{is $Y$-periodic}.
\end{cases}
\label{celleq}
\end{equation}
\end{thm}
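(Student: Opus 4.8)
\emph{A priori estimates and two-scale compactness.} The proof combines the coercivity bound of Theorem~\ref{aprioriestimate} with the two-scale convergence method, the new ingredient being the passage to the limit in the rapidly oscillating surface integral, effected by means of Proposition~\ref{convergtraces}. Testing \eqref{eq1} with $u_\ve$ and using \eqref{coercive} gives $\|u_\ve\|_{W^{1,2}(\Omega_\ve)}\le C$ uniformly in $\ve\le\ve_0$; the extension operator $P_\ve$ then yields $\|u_\ve\|_{W^{1,2}(\Omega)}\le C$, where $u_\ve$ now denotes $P_\ve u_\ve$. Passing to a subsequence, $u_\ve\rightharpoonup U_0$ weakly in $W^{1,2}(\Omega)$ and, by Rellich's theorem, strongly in $L^2(\Omega)$; hence $u_\ve$ two-scale converges to $U_0(x)$, and, by the structure theorem for two-scale limits of gradients, $Du_\ve$ two-scale converges to $DU_0(x)+D_yU_1(x,y)$ for some $U_1\in L^2(\Omega;W^{1,2}_{per}(Y))$. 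By the growth bound in \eqref{3}, the fluxes $a(Du_\ve,x/\ve)$ are bounded in $L^2(\Omega_\ve;\D{R}^N)$; extending them by zero into the holes we extract a two-scale limit $\chi(x,y)$ that vanishes for $y\in G$, i.e.\ is supported on $\Omega\times Y^*$.

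\emph{Passage to the limit in the equation.} Insert into the weak form of \eqref{eq1} the oscillating test functions $v_\ve(x)=\phi_0(x)+\ve\phi_1(x,x/\ve)$, $\phi_0\in C^\infty(\overline\Omega)$, $\phi_1\in C^\infty_0(\Omega;C^\infty_{per}(Y))$, so that $Dv_\ve=D\phi_0(x)+(D_y\phi_1)(x,x/\ve)+\ord(\ve)$. The bulk term converges to $\int_\Omega\int_{Y^*}\chi\cdot(D\phi_0+D_y\phi_1)\dd y\dd x$, the term $\lambda\int_{\Omega_\ve}u_\ve v_\ve$ to $\lambda|Y^*|\int_\Omega U_0\phi_0\dd x$, and the right-hand side to $|Y^*|\int_\Omega f\phi_0\dd x$ (using the strong $L^2$-convergence of $u_\ve$ and the weak-$*$ convergence $\mathbf{1}_{\Omega_\ve}\rightharpoonup|Y^*|$). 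The delicate term is $\int_{S_\ve}g(u_\ve,x/\ve)v_\ve\dd\sigma$. Since $|S_\ve|$ is of order $\ve^{-1}|\Omega|\,|S\cap Y|$, the part carrying the factor $\ve$ converges, by Proposition~\ref{convergtraces} applied to the traces of $u_\ve$, to $\int_\Omega\int_{S\cap Y}g(U_0,y)\phi_1(x,y)\dd\sigma_y\dd x$; the remaining part $\int_{S_\ve}g(u_\ve,x/\ve)\phi_0\dd\sigma$, a priori only of order $\ve^{-1}$, is handled by observing that the zero-mean hypothesis \eqref{7} annihilates its leading term, while a cell-by-cell expansion of $g(u_\ve,x/\ve)\phi_0(x)$ retaining the $\ord(\ve)$ contributions from the variation of $\phi_0$, from $DU_0\cdot y$ and from the corrector $U_1$ — with the nonlinear remainder controlled through \eqref{5}--\eqref{6} and Proposition~\ref{convergtraces} — yields in the limit $\int_\Omega\int_{S\cap Y}\bigl(g^\prime_u(U_0,y)U_1\phi_0+D_x(g(U_0,y)\phi_0)\cdot y\bigr)\dd\sigma_y\dd x$. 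Collecting all contributions and letting $\phi_0,\phi_1$ range over dense subsets of $W^{1,2}(\Omega)$ and $L^2(\Omega;W^{1,2}_{per}(Y))$, we obtain \eqref{varequality} with $\chi$ in place of $a(DU_0+D_yU_1,y)$. This step, and especially the rigorous justification of the $\ve^{-1}\cdot\ve$ cancellation via the two-scale convergence of traces, is the principal obstacle.

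\emph{Identification of the limit flux.} Testing \eqref{eq1} with $u_\ve$ and passing to the limit — the surface integral $\int_{S_\ve}g(u_\ve,x/\ve)u_\ve\dd\sigma$ again treated with Proposition~\ref{convergtraces}, its limit coinciding with the boundary terms of \eqref{varequality} evaluated at $\phi_0=U_0$, $\phi_1=U_1$ — yields the energy convergence $\int_{\Omega_\ve}a(Du_\ve,x/\ve)\cdot Du_\ve\dd x\to\int_\Omega\int_{Y^*}\chi\cdot(DU_0+D_yU_1)\dd y\dd x$. Then, for every $\eta=D\psi_0+D_y\psi_1$ with $\psi_0,\psi_1$ smooth, monotonicity \eqref{monoton} gives $0\le\liminf_{\ve\to0}\int_{\Omega_\ve}\bigl(a(Du_\ve,x/\ve)-a(\eta,x/\ve)\bigr)\cdot(Du_\ve-\eta)\dd x=\int_\Omega\int_{Y^*}\bigl(\chi-a(\eta,y)\bigr)\cdot(DU_0+D_yU_1-\eta)\dd y\dd x$, and varying $\eta$ over a dense set, Minty's argument forces $\chi=a(DU_0+D_yU_1,y)$ a.e. Substituting this identity into \eqref{varequality} yields the two-scale homogenized problem.

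\emph{Recovering the homogenized equations.} Taking $\phi_0=0$ in \eqref{varequality} shows, after localizing in $x$, that $U_1(x,\cdot)$ weakly solves the cell problem \eqref{celleq} with $\xi=DU_0(x)$ and $u=U_0(x)$, whence $U_1(x,\cdot)=w(\cdot;DU_0(x),U_0(x))$ by the uniqueness (up to an additive constant) asserted for \eqref{celleq}; by \eqref{7} the coefficient $b^*$ of \eqref{bstar} does not depend on that constant. Taking $\phi_1=0$, using $D_x(g(U_0,y)\phi_0)\cdot y=g^\prime_u(U_0,y)(DU_0\cdot y)\phi_0+g(U_0,y)(D\phi_0\cdot y)$ together with $\int_{S\cap Y}g^\prime_u(u,y)y\dd\sigma_y=\frac{\mathrm{d}}{\mathrm{d}u}g^*(u)$, and integrating by parts in $x$, one turns \eqref{varequality} into the weak form of \eqref{homogenizedeq1}: the flux $\int_{Y^*}a(DU_0+D_yU_1,y)\dd y$ equals $a^*(DU_0,U_0)$ by \eqref{astar}, the term $\int_{S\cap Y}g^\prime_u(U_0,y)U_1\dd\sigma_y$ equals $b^*(DU_0,U_0)$ by \eqref{bstar}, and the $g$-term produces, through $g^*$ in \eqref{gstar}, the Fourier-type boundary condition $a^*\cdot\nu=g^*\cdot\nu$ on $\partial\Omega$ in place of the homogeneous Neumann condition of \eqref{eq1}. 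Since the weak $W^{1,2}(\Omega)$-limit $U_0$ is thus characterized as a solution of \eqref{homogenizedeq1}, the proof is complete.
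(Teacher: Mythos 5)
Your proposal is correct in substance, but it organizes the monotonicity argument differently from the paper. The paper never introduces the two-scale limit of the flux: it inserts $w_\ve=u_\ve-v_\ve$ with $v_\ve=V_0+\ve V_1(x,x/\ve)$ into the weak formulation and uses \eqref{monoton} at the $\ve$-level to replace $a(Du_\ve,x/\ve)$ by $a(Dv_\ve,x/\ve)$ (which converges strongly two-scale), passes to the limit in the resulting inequality \eqref{basicinequality}, extends it to nonsmooth $(V_0,V_1)$ (Nemytskii plus Proposition \ref{continuityofM}), and recovers \eqref{varequality} by the perturbation $V_0=U_0\pm\tau\Phi_0$, $V_1=U_1\pm\tau\Phi_1$; no energy convergence and no identification of the limit flux are needed. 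You instead pass to the limit in the equation itself with the unidentified flux $\chi$, prove energy convergence by testing with $u_\ve$, and then identify $\chi$ by Minty's trick. This works, but it costs you the surface-term limit twice (once against the test function, once against $u_\ve$ itself for the energy), and the substitution of the nonsmooth pair $(U_0,U_1)$ into the $\chi$-form of \eqref{varequality} requires exactly the continuity argument the paper supplies via Proposition \ref{continuityofM}; also, varying $\eta$ only over fields $D\psi_0+D_y\psi_1$ does not give $\chi=a(DU_0+D_yU_1,y)$ a.e.\ (these fields are not dense in $L^2(\Omega\times Y^*)^N$), but only that the difference annihilates such fields --- which is all you need for \eqref{varequality}; pointwise identification would require arbitrary oscillating test fields $\zeta(x,x/\ve)$. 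The genuinely hard ingredient is the same in both routes: the limit of the oscillating surface integral, which you, like the paper, reduce to Proposition \ref{convergtraces} together with the cell-average expansion exploiting \eqref{7}, \eqref{5}, \eqref{6} (the content of Lemmas \ref{techlem1}--\ref{techlem2} and the argument for \eqref{middleterm}); your sketch of that expansion matches the paper's. In exchange for the extra steps, your scheme yields the identification of the two-scale flux limit as a by-product and follows the more standard two-scale/Minty template, whereas the paper's inequality-plus-perturbation route is leaner.
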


\begin{rem}
Note that (\ref{varequality}) defines $U_1(x,y)$ modulo an
arbitrary function $\tilde U_1(x,y)\in
L^2(\Omega,W^{1,2}_{per}(Y)$ such that $U_1(x,y)=0$ for $y\in
Y^*$. This is due to the freedom in the particular choice of the
extension operators $P_\ve$.
\end{rem}

\begin{rem} The third term in (\ref{varequality}) is reduced by
integrating by parts to the boundary integral
$$
\int_\Omega\int_{S\cap Y} D_x (g(U_0,y)\Phi_0)\cdot y {\rm
d}\sigma_y{\rm d}x=\int_{\partial
\Omega}\Phi_0g^*(U_0)\cdot\nu{\rm d}\sigma_x,
$$
that leads to the boundary condition in (\ref{homogenizedeq1}).
\end{rem}

\begin{rem} In the linear case, that is when $a$ and $g$
are given by $a(\xi,y)=A(y)\xi$, $g(u,y)=\alpha(y)+u\beta(y)$, the
cell problem (\ref{celleq}) for $w$ splits into three cell
problems for $w^{(1)}$,
\begin{equation}
\begin{cases}
{\rm div}\, (A(y)(\xi+D_y w^{(1)}))=0\ \text{in}\ Y^*\\
A(y)D_y w^{(1)}\cdot\nu=-A(y)\xi\cdot\nu\ \text{on}\ S\cap Y\\
w^{(1)}\ \text{is $Y$-periodic},
\end{cases}
\label{celleq1}
\end{equation}
and $w^{(k)}$ ($k=2,3$),
\begin{equation}
\begin{cases}
{\rm div}\, (A(y)D_y w^{(k)})=0\ \text{in}\ Y^*\\
A(y)D_y w^{(k)}\cdot\nu=\delta_{2k}\beta(y)+\delta_{3k}\alpha(y)\ \text{on}\ S\cap Y\\
w^{(2)}\ \text{is $Y$-periodic},
\end{cases}
\label{celleq2}
\end{equation}
($\delta_{ij}$ is the Kronecker delta) so that $w=w^{(1)}+u
w^{(2)}+w^{(3)}$. Then the homogenized equation takes form
$$
{\rm div}A^{\rm hom} DU_0+B^{\rm hom}\cdot DU_0 +C^{\rm
hom}U_0+D^{\rm hom}+|Y^*|(f-\lambda U_0)=0,
$$
where the homogenized matrix $A^{\rm hom}$ coincides with the
classical effective matrix for the Neumann problem in perforated
domains,
$$A^{\rm hom}\xi=\int_{Y^*}A(y)(\xi+D_y w^{(1)}){\rm d}
y,
$$
and
$$
B^{\rm hom}\cdot \xi=\int_{Y^*}A(y)D_y w^{(2)}\cdot(\xi+D_y
w^{(1)}){\rm d} y,
$$
$$
C^{\rm hom}=\int_{Y^*}A(y)D_y w^{(2)}\cdot D_y w^{(2)}{\rm d} y,\
D^{\rm hom}=\int_{Y^*}A(y)D_y w^{(2)}\cdot D_y w^{(3)}{\rm d} y.
$$
Note, that $B^{\rm hom}=0$ in the selfadjoint case (when $A=A^T$).
\end{rem}

In the case of the parabolic problem (\ref{eq2}) we prove that
there is a unique solution $u_\ve$ and its asymptotic behavior in
the leading term is described by the homogenized problem
(\ref{homogenizedeq2}). Formulating the convergence result we
assume as before $u_\ve$ extended onto the whole domain $\Omega$
by means of the extension operator $P_\ve$


\begin{thm}
\label{th2} Assume that conditions {\rm (i)} - {\rm (iv)} are
satisfied. Then, if $f\in L^2((0,T)\times\Omega)$ and $\tilde u\in
L^2(\Omega)$, there is a unique solution of problem (\ref{eq2})
and it converges weakly in $L^2(0,T;W^{1,2}(\Omega))$ as $\ve\to
0$ (up to extracting a subsequence) to a solution $U_0$ of the
homogenized problem (\ref{homogenizedeq2}), where $a^*$, $b^*$,
$g^*$ are defined by (\ref{astar}), (\ref{bstar}), (\ref{gstar}),
(\ref{celleq}).
\end{thm}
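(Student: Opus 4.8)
The plan is to follow the scheme of the proof of Theorem~\ref{th1}, with two modifications dictated by the parabolic structure: the term $\partial_t u_\ve$ now plays the role of the coercive term $\lambda u$ of the elliptic a priori estimates, so that --- in contrast to (\ref{eq1}) --- no parameter needs to be large and $u_\ve$ exists and is unique for all data; and, following \cite{CP1,CP2,ClS,PPR}, a lower semicontinuity argument replaces the direct passage to the limit in the monotone term.

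\emph{Existence, uniqueness, and uniform estimates for $u_\ve$.} In weak form (\ref{eq2}) reads: find $u_\ve\in L^2(0,T;X_\ve)$ with $\partial_t u_\ve\in L^2(0,T;X_\ve^*)$, $u_\ve(0)=\tilde u$, and $\partial_t u_\ve+{\C A}_\ve(u_\ve)-{\C G}_\ve(u_\ve)=f$ in $X_\ve^*$ for a.e.\ $t$. By (i)--(iii) ${\C A}_\ve$ is bounded, continuous and monotone and, by (i), (\ref{4}), (\ref{5}), ${\C G}_\ve$ is a bounded Lipschitz --- hence compact and pseudo-monotone --- perturbation, so existence of a solution follows from the standard theory of evolution equations with pseudo-monotone operators (cf.\ \cite{S}) once one has an a priori bound. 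Testing with $u_\ve$ and arguing as in the proof of Theorem~\ref{aprioriestimate} --- using (\ref{monoton}), (\ref{3}), the zero-mean condition (\ref{7}), the Lipschitz bound (\ref{5}) and the scaled trace/Poincar\'e inequality on the holes to absorb $\langle{\C G}_\ve(u_\ve),u_\ve\rangle_\ve$ into $\delta\|Du_\ve\|^2_{L^2(\Omega_\ve)}+C_\delta(1+\|u_\ve\|^2_{L^2(\Omega_\ve)})$ --- one gets $\tfrac12\tfrac{d}{dt}\|u_\ve\|^2_{L^2(\Omega_\ve)}+c\|u_\ve\|^2_{X_\ve}\le C(1+\|u_\ve\|^2_{L^2(\Omega_\ve)}+\|f\|^2_{L^2(\Omega_\ve)})$, whence Gronwall's lemma gives $u_\ve$ bounded in $L^\infty(0,T;L^2(\Omega_\ve))\cap L^2(0,T;X_\ve)$, uniformly in $\ve$. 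Uniqueness follows by subtracting the equations for two solutions, testing with their difference, bounding ${\C A}_\ve$ from below by (\ref{monoton}) and ${\C G}_\ve$ from above by (\ref{5}) and the trace inequality (an $\ve$-dependent constant being harmless here), and applying Gronwall.

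\emph{Compactness and the two-scale homogenized problem.} Extend $u_\ve$ to $\Omega$ by $P_\ve$. The above bounds, together with the control of $\partial_t u_\ve$ in $L^2(0,T;X_\ve^*)$ obtained from the equation (again invoking (\ref{7}) for the surface part), allow a standard compactness argument (uniform control of the time-translates and the Rellich compactness theorem), yielding, along a subsequence, $P_\ve u_\ve\to U_0$ strongly in $L^2((0,T)\times\Omega)$ and weakly in $L^2(0,T;W^{1,2}(\Omega))$, with $Du_\ve$ two-scale converging to $DU_0+D_yU_1$, $U_1\in L^2((0,T)\times\Omega;W^{1,2}_{per}(Y))$; let $\overline a$ denote the two-scale limit of $a(Du_\ve,x/\ve)$. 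Testing (\ref{eq2}) with $\phi(t)\bigl(\Phi_0(x)+\ve\Phi_1(x,x/\ve)\bigr)$, $\phi\in C_c^\infty(0,T)$, $\Phi_0\in C^\infty(\overline\Omega)$, $\Phi_1\in C_c^\infty(\Omega;C^\infty_{per}(Y))$, the time term becomes $-\int_0^T\phi'\int_{\Omega_\ve}u_\ve(\Phi_0+\ve\Phi_1)$ and converges to $-\int_0^T\phi'\int_\Omega|Y^*|U_0\Phi_0$ by strong $L^2$-convergence and $\mathbf 1_{\Omega_\ve}\rightharpoonup|Y^*|$; the bulk term and the surface term are passed to the limit exactly as in the elliptic case, the latter via Proposition~\ref{convergtraces}. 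This gives the parabolic two-scale problem: for all admissible $\phi,\Phi_0,\Phi_1$,
\begin{multline*}
-\int_0^T\!\phi'\!\int_\Omega|Y^*|U_0\Phi_0+\int_0^T\!\phi\!\int_\Omega\!\int_{Y^*}\!\overline a\cdot(D\Phi_0+D_y\Phi_1)-\int_0^T\!\phi\!\int_\Omega|Y^*|f\Phi_0\\
-\int_0^T\!\phi\!\int_\Omega\!\int_{S\cap Y}\!\bigl(g(U_0,y)\Phi_1+g^\prime_u(U_0,y)\Phi_0U_1+D_x(g(U_0,y)\Phi_0)\cdot y\bigr)=0,
\end{multline*}
which extends by density to time-dependent test functions $\Psi_0,\Psi_1$ (with $\phi'\Phi_0$ replaced by $\partial_t\Psi_0$), and from it one reads off, as usual, that $\partial_tU_0\in L^2(0,T;(W^{1,2}(\Omega))^*)$.

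\emph{Flux identification --- the main obstacle --- and conclusion.} Since $\partial_t u_\ve$ lies only in a dual space and $Du_\ve$ converges merely weakly, one cannot pass directly to the limit in the energy $\int_{\Omega_\ve}a(Du_\ve,x/\ve)\cdot Du_\ve$; this is the crux. Fix a cut-off $\psi\in C_c^\infty(0,T)$, $\psi\ge0$, and test (\ref{eq2}) with $\psi u_\ve$: this gives $\int_0^T\psi\langle{\C A}_\ve(u_\ve),u_\ve\rangle_\ve=\tfrac12\int_0^T\psi'\|u_\ve\|^2_{L^2(\Omega_\ve)}+\int_0^T\psi\langle{\C G}_\ve(u_\ve),u_\ve\rangle_\ve+\int_0^T\psi\langle f,u_\ve\rangle_\ve$, where the first term on the right is an \emph{honest} limit (because $|u_\ve|^2\to U_0^2$ strongly in $L^1$ while $\psi'\mathbf 1_{\Omega_\ve}\rightharpoonup\psi'|Y^*|$ weak-$*$, and $\partial_tU_0$ is in the dual), and the last two converge by strong $L^2$-convergence and Proposition~\ref{convergtraces}; comparing the result with the two-scale problem tested with $\psi U_0,\psi U_1$ yields $\int_0^T\psi\langle{\C A}_\ve(u_\ve),u_\ve\rangle_\ve\to\int_0^T\psi\int_\Omega\int_{Y^*}\overline a\cdot(DU_0+D_yU_1)$. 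With this at hand, the classical Minty--Browder monotonicity argument --- starting from $0\le\int_0^T\psi\int_{\Omega_\ve}\bigl(a(Du_\ve,x/\ve)-a(Dv_\ve,x/\ve)\bigr)\cdot(Du_\ve-Dv_\ve)$ with $v_\ve=\Phi_0+\ve\Phi_1(x,x/\ve)$, passing to the two-scale limit, and letting $(\Phi_0,\Phi_1)$ approach $(U_0,U_1)$ --- identifies $\overline a$ with $a(DU_0+D_yU_1,y)$ against all potential two-scale fields; consequently $U_1=w(\cdot\,;DU_0,U_0)$ solves the cell problem (\ref{celleq}) and $\int_{Y^*}\overline a\,dy=a^*(DU_0,U_0)$ in the weak formulation. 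Finally, decoupling the two-scale problem as in the Remarks after Theorem~\ref{th1} --- the part with $\Phi_0=0$ being the weak cell problem, and the part with $\Phi_1=0$, combined with the integration by parts $\int_\Omega\int_{S\cap Y}D_x(g(U_0,y)\Phi_0)\cdot y=\int_{\partial\Omega}\Phi_0\,g^*(U_0)\cdot\nu$, being the weak form of (\ref{homogenizedeq2}) with $a^*,b^*,g^*$ as in (\ref{astar})--(\ref{gstar}) --- shows that $U_0$ solves (\ref{homogenizedeq2}). Since uniqueness of the limit problem is not asserted, it suffices that every subsequence has a further subsequence with this behaviour, which is what has been established; the main difficulty throughout is precisely the flux identification, i.e.\ controlling $\int_{\Omega_\ve}a(Du_\ve)\cdot Du_\ve$ with neither a variational structure nor strong convergence of $Du_\ve$ at hand, which is why both (\ref{monoton}) and the lower-semicontinuity trick are essential.
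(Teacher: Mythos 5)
Your strategy is viable but genuinely different from the paper's at the decisive step. The paper never identifies the limit of the energy $\int_0^T\langle{\C A}_\ve(u_\ve),u_\ve\rangle_\ve\,{\rm d}t$: it tests with $u_\ve-v_\ve$, keeps the endpoint terms $\tfrac12\langle u_\ve(T),u_\ve(T)\rangle_\ve-\tfrac12\langle\tilde u,\tilde u\rangle_\ve-\dots$, uses monotonicity of $a$ at the $\ve$-level to replace ${\C A}_\ve(u_\ve)$ by ${\C A}_\ve(v_\ve)$, and the lower semicontinuity $\liminf_{\ve\to0}\langle u_\ve(T),u_\ve(T)\rangle_\ve\ge\langle U_0(T),U_0(T)\rangle$ (after proving $\hat u_\ve(t)\rightharpoonup|Y^*|U_0(t)$ in $L^2(\Omega)$ for \emph{every} $t$), arriving at the variational inequality (\ref{varinequalityparab}); Minty's trick is applied only to that limit inequality. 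You instead derive the two-scale system with an unidentified flux $\overline a$, prove convergence of energies by testing with $\psi u_\ve$ (legitimate: the time term localizes with $\psi\in C_c^\infty(0,T)$ and is handled by the strong $L^2$ convergence), and then run Minty--Browder in the limit. This is a more classical presentation and avoids the endpoint lower-semicontinuity argument; its price is that you must insert $(\psi U_0,\psi U_1)$ as test functions in the limit system, which requires the density/continuity machinery (Nemytskii plus the analogue of Proposition \ref{continuityofM}) that the paper also invokes. Your well-posedness and a priori estimates are essentially the paper's (it monotonizes ${\C A}_\ve-{\C G}_\ve$ by adding $\tilde\lambda u$ with $\tilde\lambda$ depending on $\ve$ and an exponential substitution); your compactness via time-translates and a Simon-type theorem replaces the paper's spectral-decomposition proof of Lemma \ref{lemstrongL2conver}, and works provided the translate estimates, obtained through the $\ve$-dependent duality, are transferred through $P_\ve$.

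Two concrete gaps remain. First, the initial condition: all your test functions vanish near $t=0$, so your argument yields only the equation and boundary condition in (\ref{homogenizedeq2}), never $U_0=\tilde u$ at $t=0$, which is part of the statement. You need either test functions with $\phi(0)\neq0$ (picking up $-\phi(0)\langle\tilde u,\Phi_0\rangle_\ve$ at the $\ve$ level) together with the pointwise-in-time weak convergence $\hat u_\ve(t)\rightharpoonup|Y^*|U_0(t)$ (which the paper gets from the equicontinuity $|\langle u_\ve(t)-u_\ve(t'),\phi\rangle_\ve|\le C|t-t'|^{1/2}\|\phi\|_X$), or the paper's route in which $\langle\tilde u,v_\ve(0)\rangle_\ve$ is retained in the inequality. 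Second, you invoke Proposition \ref{convergtraces} for the surface terms, but the two-scale convergence available here is only the time-parametrized one (\ref{parametertwoscale}), which does not give two-scale convergence at a.e.\ fixed $t$; you need its time-dependent version, Proposition \ref{convergtraces1}, obtained from Proposition \ref{convergtraces} by a Riemann-sum-in-$t$ argument, combined with Lemmas \ref{techlem1}, \ref{techlem2} and the strong $L^2(\Omega\times[0,T])$ convergence, exactly as in the proof of (\ref{thusconvergenceint}). Both gaps are repairable with material already in the paper, but as written your proof is incomplete on these two points.
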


\section{Proof of the convergence result for the stationary problem}
\label{s_stationary}

It follows from Theorem \ref{aprioriestimate} that
$\|u_\ve\|_{W^{1,2}(\Omega)}\leq C$, where $C$ is independent of
$\ve$. Therefore, up to extracting a subsequence,
\begin{equation}
u_\ve\to U_0(x)\ \text{two-scale}, \label{9}
\end{equation}
\begin{equation}
Du_\ve\to DU_0(x)+D_yU_{1}(x,y)\ \text{two-scale}. \label{10}
\end{equation}

Show that the pair $(U_0,\ U_1)$ solves (\ref{varequality}). To
this end we chose arbitrary functions $V_0(x)\in
C^{\infty}(\overline{\Omega})$, $V_1(x,y)\in
C^{\infty}(\overline{\Omega}\times \overline{Y})$ with $V_1(x,y)$
being $Y$-periodic in $y$, set $v_\ve =V_0(x)+\ve V_{1}(x,x/\ve)$,
and substitute the test function $w_\ve =u_\ve-v_\ve$ in the weak
formulation of (\ref{eq1}),
\begin{equation}
\int_{\Omega_\ve}(a(Du_\ve, x/\ve)\cdot D w_\ve +\lambda u_\ve
w_\ve){\rm d}x-\int_{S_\ve}g(u_\ve,x/\ve)w_\ve{\rm
d}\sigma=\int_{\Omega_\ve}fw_\ve{\rm d}x. \label{weakform}
\end{equation}
In view of the monotonicity assumption (\ref{monoton}) we then
have from (\ref{weakform}),
\begin{multline}
\int_{\Omega_\ve}(a(Dv_\ve, x/\ve)\cdot D(u_\ve-v_\ve) +\lambda
v_\ve (u_\ve-v_\ve)){\rm
d}x-\int_{S_\ve}g(u_\ve,x/\ve)(u_\ve-v_\ve){\rm
d}\sigma\\-\int_{\Omega_\ve}f(u_\ve-v_\ve){\rm d}x\leq 0.
\label{basicinequality}
\end{multline}
Since $Dv_\ve =DV_0(x)+D_y V_{1}(x,x/\ve)+\ve D_x V_{1}(x,x/\ve)$,
by using (i) and (\ref{3}) one easily shows that $\chi_\ve
a(Dv_\ve, x/\ve)\to \chi(y) a(DV_0(x)+D_yV_1(x,y),y)$ in the
strong two-scale sense, where $\chi_\ve$, $\chi$ are the
characteristic functions of $\Omega_\ve$ and $Y^*$, respectively.
This allows to pass to the limit in the first term of l.h.s. of
(\ref{basicinequality}) to get
\begin{multline}
\int_{\Omega_\ve}(a(Dv_\ve, x/\ve)\cdot D(u_\ve-v_\ve) +\lambda
v_\ve (u_\ve-v_\ve)){\rm d}x\to \\
\int_\Omega\left(\int_{Y^*} (a(DV_0+D_yV_1,y)\cdot
(DU_0+D_yU_1-DV_0-D_yV_1) + \lambda V_0 (U_0-V_0)){\rm
d}y\right){\rm d}x; \label{firstterm}
\end{multline}
also, the limit transition in the last term in l.h.s. of
(\ref{basicinequality}) yields
\begin{equation}
\int_{\Omega_\ve}f(u_\ve-v_\ve){\rm d}x\to
\int_\Omega\left(\int_{Y^*}f(U_0-V_0){\rm d}y\right){\rm d}x.
\label{lastterm}
\end{equation}
Finally, passing to the limit in the middle term we get
\begin{multline}
\int_{S_\ve}g(u_\ve, x/\ve)(u_\ve-v_\ve){\rm d}\sigma\to
\\
\int_\Omega\left(\int_{S\cap Y} g(U_0,y)(D(U_0-V_0)\cdot y+
U_1(x,y)-V_1(x,y)){\rm d}\sigma_y\right){\rm d}x\\
+\int_\Omega\left(\int_{S\cap Y}
g^\prime_u(U_0,y)(U_0-V_0)(DU_0\cdot y+ U_1(x,y)){\rm
d}\sigma_y\right){\rm d}x. \label{middleterm}
\end{multline}
The most nontrivial point is to obtain (\ref{middleterm}). The
proof of (\ref{middleterm}) is presented in full details through
Sections \ref{section4}, \ref{section5} and is based on the
following result, which is of an interest itself,

\begin{prop}
\label{convergtraces} Assume that $q(x,y)\in
C(\Omega;\L^\infty(S))$ satisfies
\begin{itemize}
\item[{\rm (a)}] $|q(x,y)-q(x^{\prime},y)|\leq C|x-x^\prime|$ with
$C>0$ independent of $x,x^\prime \in\Omega$
 and $y\in S$;
\item[{\rm (b)}] $q(x,y)$ is $Y$-periodic in $y\in S$; \item[{\rm
(c)}] $\int_{Y\cap S}q(x,y){\rm d}\sigma_y=0$ for all
$x\in\Omega$,
\end{itemize}
then for any sequence $w_\ve\in W^{1,2}(\Omega)$ such that
\begin{equation} w_\ve(x)\to W_0(x),\ Dw_\ve(x)\to
DW_0(x)+D_yW_1(x,y) \ \text{\rm two scale as}\ \ve\to 0.
\label{convergwve}
\end{equation}
we have
\begin{equation}
\int_{S_\ve}q(x,x/\ve)(w_\ve-\bar w_\ve){\rm d}\sigma\to
\int_\Omega \int_{Y\cap S}q(x,y)(DW_0\cdot y+W_1(x,y)){\rm
d}\sigma_y {\rm d}x. \label{traces}
\end{equation}
Here and in what follows we use the notation $\bar w_\ve$ for the
piecewise constant function obtained by averaging over the cells
$Y_\ve^{(m)}$,
\begin{equation}
\bar w_\ve(x)=\frac{1}{\ve^N}\int_{Y_\ve^{(m)}}w_\ve(y){\rm d}y, \
\text{for}\ x\in Y_\ve^{(m)}. \label{aver}
\end{equation}
\end{prop}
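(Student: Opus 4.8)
The plan is to reduce the surface integral over $S_\ve$ to a bulk quantity by exploiting assumption (c), the zero-average condition, which is the only way to control a quantity whose naive size is of order $|S_\ve|\sim\ve^{-1}$. The starting observation is that on each cell $Y_\ve^{(m)}$ one can write $q(x,x/\ve)=\div_y\,\Psi(x/\ve)$ in $Y^*$ with an appropriate $Y$-periodic vector field $\Psi$ solving $\div_y\Psi=q$ in $Y^*$ together with a compatibility prescription on $S\cap Y$; more precisely, since $\int_{S\cap Y}q\,\mathrm d\sigma_y=0$, the Neumann problem $\Delta_y\psi=0$ in $Y^*$, $\partial_\nu\psi=q$ on $S\cap Y$, $\psi$ $Y$-periodic is solvable, and taking $\Psi=D_y\psi$ we get a periodic field with $\Psi\cdot\nu=q$ on $S\cap Y$ and $\div_y\Psi=0$ in $Y^*$. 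The $x$-dependence of $q$ is Lipschitz by (a), so $\psi=\psi(x,y)$ inherits Lipschitz dependence on $x$ uniformly in $y$; rescaling, $\Psi_\ve(x):=\Psi(x,x/\ve)$ satisfies $\ve\,\Psi_\ve\cdot\nu=\ve\,q(x,x/\ve)$ on $S_\ve$ and $\div(\ve\Psi_\ve)=\ve\,\div_x\psi(x,x/\ve)$ in $\Omega_\ve$, the latter being $O(\ve)$ in $L^\infty$.

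With this field in hand I would integrate by parts: for $w_\ve\in W^{1,2}(\Omega)$,
\begin{multline*}
\int_{S_\ve}q(x,x/\ve)(w_\ve-\bar w_\ve)\,\mathrm d\sigma
=\int_{S_\ve}\ve\Psi_\ve\cdot\nu\,(w_\ve-\bar w_\ve)\,\mathrm d\sigma\\
=-\int_{\Omega_\ve}\ve\Psi_\ve\cdot D w_\ve\,\mathrm d x
-\int_{\Omega_\ve}\ve\,(\div_x\psi)(x,x/\ve)\,(w_\ve-\bar w_\ve)\,\mathrm d x,
\end{multline*}
where the outer boundary $\partial\Omega$ contributes nothing because the perforations stay away from it and $\Psi_\ve\cdot\nu=0$ there by construction (one extends $\psi$ trivially, or simply notes $S_\ve$ is the relevant piece). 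In the second bulk term, $\bar w_\ve$ appears only because on each cell $\int_{Y^{(m)}_\ve}\div_x\psi\,(w_\ve-\bar w_\ve)$ needs no correction — in fact $\bar w_\ve$ can be inserted or removed freely there since the extra piece is $O(\ve)\|w_\ve\|_{L^2}$; the honest role of $\bar w_\ve$ is in the \emph{first} term. There, on each cell $\int_{Y^{(m)}_\ve}\ve\Psi_\ve\cdot D w_\ve=\int_{Y^{(m)}_\ve}\ve\Psi_\ve\cdot D(w_\ve-\bar w_\ve)$ since $D\bar w_\ve=0$, so this is genuinely $O(\ve)$ times a gradient; passing to the two-scale limit using (\ref{convergwve}) and the strong two-scale convergence of $\ve\Psi_\ve$ (as an $O(\ve)$ oscillating field, $\ve\Psi_\ve\to 0$ strongly in $L^2(\Omega)$, but $\Psi_\ve$ itself two-scale converges to $\Psi(x,y)$), one identifies
\[
\int_{\Omega_\ve}\ve\Psi_\ve\cdot D w_\ve\,\mathrm d x
\;\longrightarrow\;
\int_\Omega\int_{Y^*}\Psi(x,y)\cdot\big(D_yW_1(x,y)+\text{corrector terms}\big)\,\mathrm d y\,\mathrm d x,
\]
and then an integration by parts in $y$ on $Y^*$, using $\div_y\Psi=0$ in $Y^*$ and $\Psi\cdot\nu=q$ on $S\cap Y$, converts the right-hand side into $-\int_\Omega\int_{S\cap Y}q(x,y)\big(DW_0(x)\cdot y+W_1(x,y)\big)\,\mathrm d\sigma_y\,\mathrm d x$; the term $DW_0\cdot y$ emerges precisely from the affine part of the two-scale limit of $w_\ve-\bar w_\ve$ on a cell (the difference $w_\ve(x)-\bar w_\ve$ behaves to leading order like $DW_0(x)\cdot(x-x_m)\sim\ve\,DW_0(x)\cdot y$). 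The second bulk term tends to $0$ because it is bounded by $\ve\|\div_x\psi\|_\infty\|w_\ve-\bar w_\ve\|_{L^2(\Omega)}$ and $\|w_\ve-\bar w_\ve\|_{L^2(\Omega)}=O(\ve)$ by the Poincaré–Wirtinger inequality on each cell combined with the uniform bound on $Dw_\ve$.

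The main obstacle, and the step requiring the most care, is making rigorous the identification of the two-scale limit of the product $\ve\,\Psi_\ve\cdot Dw_\ve$ on the perforated domain and, in parallel, the appearance of the affine term $DW_0\cdot y$ — that is, controlling $w_\ve-\bar w_\ve$ at the right scale and showing it two-scale converges (after rescaling by $\ve^{-1}$) to $DW_0(x)\cdot y+W_1(x,y)$ on $Y^*\times\Omega$. This is a quantitative refinement of the standard fact that $\ve^{-1}(w_\ve-\bar w_\ve)$ two-scale converges; one route is to test against $\ve V(x,x/\ve)$ for smooth $Y$-periodic $V$ vanishing near $S$ and integrate by parts, but handling the perforation boundary and the non-smoothness of $w_\ve$ forces an approximation argument. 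A clean way to package all of this is to prove the claim first for smooth $w_\ve$ of the special form $W_0(x)+\ve W_1(x,x/\ve)$, verify (\ref{traces}) by direct computation using (b) and (c), and then pass to general $w_\ve$ by density together with the uniform trace estimate $\big|\int_{S_\ve}q(x,x/\ve)(w_\ve-\bar w_\ve)\,\mathrm d\sigma\big|\le C\|w_\ve\|_{W^{1,2}(\Omega)}$, which itself follows from the integration-by-parts identity above — this uniform bound, proved separately, is the technical linchpin and is exactly the sort of statement deferred to Sections \ref{section4} and \ref{section5}.
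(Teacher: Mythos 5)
Your central device---the auxiliary periodic function $\psi$ with $\Delta_y\psi=0$ in $Y^*$, $\partial_\nu\psi=q$ on $S\cap Y$---is exactly the paper's auxiliary function $\theta$ of (\ref{theta}), but the way you use it has genuine gaps. The integration by parts over $\Omega_\ve$ is not legitimate as written: $\bar w_\ve$ is piecewise constant and jumps across the cell faces, so $w_\ve-\bar w_\ve\notin W^{1,2}(\Omega_\ve)$ and the divergence theorem must be applied cell by cell; the resulting integrals over the cube faces involving $\bar w_\ve$ do \emph{not} cancel between neighbouring cells and contribute at order one. Equivalently, your concluding integration by parts in $y$ over $Y^*$ is applied to the non-periodic function $DW_0(x)\cdot y$, so the $\partial Y$ boundary terms cannot be dropped: $\int_{Y^*}D_y\psi\cdot DW_0\,{\rm d}y$ differs from $\int_{S\cap Y}q\,DW_0\cdot y\,{\rm d}\sigma_y$ precisely by face-flux terms which are generically nonzero. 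This is exactly the part of the limit you say ``emerges from the affine part'', and your argument never actually produces it. (There is also a scaling slip: with $\Psi_\ve(x)=D_y\psi(x,x/\ve)$ one has $\Psi_\ve\cdot\nu=q(x,x/\ve)$ on $S_\ve$ with no factor $\ve$, and with your normalization $\int_{\Omega_\ve}\ve\Psi_\ve\cdot Dw_\ve\,{\rm d}x\to 0$ by Cauchy--Schwarz, so the term you keep would vanish; moreover the claimed $L^\infty$ bound on the chain-rule term is unavailable, since $D_y\psi(x,\cdot)$ is only in $L^2(Y^*)$.) The paper avoids all of this by splitting the surface integral: the $\bar w_\ve$-part is treated directly, using (c) to gain a factor $\ve^N$ on each cell, the strong $L^2$ convergence $\bar w_\ve\to W_0$ and an integration by parts in $x$---this is where $DW_0\cdot y$ really comes from, and it requires $W_0$ to vanish near the boundary of the localization domain; only the $w_\ve$-part is converted into a bulk integral against $(D\theta)(x/\ve)$, after freezing $q$ in $x$ by a partition of unity so that the auxiliary function does not depend on $x$. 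Your dismissal of the outer boundary is also unfounded: $\Psi_\ve\cdot\nu$ has no reason to vanish on $\partial\Omega$, and $\psi(x,\cdot)$ is not even defined in the unperforated boundary-layer cells where $x/\ve$ falls into $G$; the paper instead works in an interior subdomain $\Omega^\prime$ and bounds the contribution of $S_\ve\setminus S_\ve^\prime$ by $C\delta^{1/2}\|w_\ve\|_{W^{1,2}(\Omega)}$ via (\ref{rescaledpoincare}), cf.\ (\ref{vozlegranitsy}).

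The fallback you propose---prove (\ref{traces}) for $w_\ve=W_0+\ve W_1(x,x/\ve)$ and extend ``by density together with the uniform bound $|b_\ve w_\ve|\le C\|w_\ve\|_{W^{1,2}(\Omega)}$''---also fails. Uniform boundedness of the functionals transfers a limit to a \emph{fixed} element $w$, not to a varying sequence: you would need $w_\ve-(W_0+\ve W_1(x,x/\ve))$ to be small in the $W^{1,2}(\Omega)$-norm, and two-scale convergence gives nothing of the sort (the difference tends to zero only weakly/two-scale). Overcoming this is precisely the content of the paper's Step 3: one subtracts from $w_\ve$ the harmonic lifting $w_\ve^{(1)}$ of the boundary values $w_\ve-W_0$ on $\partial\Omega^\prime$, which converges to zero strongly in $W^{1,2}$ on compact subsets of $\Omega^\prime$, so that $b_\ve w_\ve^{(1)}\to 0$; the remaining piece vanishes on $\partial\Omega^\prime$ and falls under the localized case, while $b_\ve W_0$ is computed for the fixed function $W_0$ by a Taylor/Riemann-sum argument. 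Without the split of $w_\ve-\bar w_\ve$, the localization, and this harmonic correction, the steps as written do not yield (\ref{traces}).
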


Thus (\ref{basicinequality})-(\ref{middleterm}) yield
\begin{multline}
\int_\Omega\left(\int_{Y^*} (a(DV_0+D_yV_1,y)\cdot
(DU_0+D_yU_1-DV_0-D_yV_1) + \lambda V_0 (U_0-V_0)){\rm
d}y\right){\rm d}x
\\
-\int_\Omega\left(\int_{S\cap Y} g(U_0,y)(D(U_0-V_0)\cdot y+
U_1(x,y)-V_1(x,y)){\rm d}\sigma_y\right){\rm d}x\\
-\int_\Omega\left(\int_{S\cap Y}
g^\prime_u(U_0,y)(U_0-V_0)(DU_0\cdot y+ U_1(x,y)){\rm
d}\sigma_y\right){\rm d}x
\\
-\int_\Omega\left(\int_{Y^*}f(U_0-V_0){\rm d}y\right){\rm d}x\leq
0, \label{varinequality}
\end{multline}
By an approximation argument, using (i)-(iv) we see that
(\ref{varinequality}) holds for any $V_0\in W^{1,2}(\Omega)$ and
$V_1\in L^2(\Omega;W^{1,2}_{per}(Y))$. Now, choosing
$V_0=U_0\pm\tau\Phi_0$, $V_1=U_1\pm\tau\Phi_1$, ($\tau>0$),
dividing (\ref{varinequality}) by $\tau$ and passing to the limit
as $\tau\to 0$, we obtain the two-scale homogenization problem
(\ref{varequality}).\hfill$\square$

\bigskip

Let us clarify details in the final part of the above proof when
passing from smooth $V_0$ and $V_1$ to arbitrary functions $V_0\in
W^{1,2}(\Omega)$ and $V_1\in L^2(\Omega;W^{1,2}_{per}(Y))$ in
(\ref{varinequality}). For the for the first term in the l.h.s.
this transition is justified by Nemytskii's theorem (see, e.g.,
\cite{S}, Chapter II); and it is a trivial task for the last term.
The second and third terms, corresponding to the limiting
functional $M(U_0,U_1,V_0,V_1)$ in (\ref{middleterm}), require
more attention. Let us rewrite $M(U_0,U_1,V_0,V_1)$ as
\begin{multline}
M(U_0,U_1,V_0,V_1)= \int_\Omega (g^*(U_0)\cdot
D(U_0-V_0)+(U_0-V_0)(g^*)^\prime(U_0)\cdot DU_0) {\rm d}x \\
+ \int_\Omega\int_{Y^*} D_y \Theta(y;U_0)\cdot
D_y(U_1(x,y)-V_1(x,y)){\rm d}y{\rm d}x\\
 +
 \int_\Omega\int_{Y^*}(U_0-V_0)
 D_y \Theta^\prime_u(y;U_0)\cdot
D_yU_1(x,y){\rm d}y {\rm d}x, \label{middletermrepresentation}
\end{multline}
where $(g^*)^\prime$ denotes the derivative of $g^*$, and
$\Theta(y;u)$ is a solution of the problem
\begin{equation}
\begin{cases}
\Delta_y\Theta=0\ \text{in}\ Y^*\\
\frac{\partial\Theta}{\partial \nu}=g(u,y)\ \text{on}\ S\cap Y\\
\Theta\ \text{is $Y$-periodic}.
\end{cases}
\label{celltheta}
\end{equation}
It follows from the assumptions (iii), (iv) that (\ref{celltheta})
has a unique (modulo an additive constant) solution $\Theta(y;u)$,
and $\Theta$ depends regularly on the parameter $u$, more
precisely,
\begin{equation}
\|D_y\Theta(\,\cdot\,;u)\|_{L^2(Y^*)}\leq C(|u|+1), \label{Theta1}
\end{equation}
\begin{equation}
\|D_y\Theta(\,\cdot\,;u)-D_y\Theta(\,\cdot\,;v)\|_{L^2(Y^*)}\leq
C|u-v|,\label{Theta2}
\end{equation}
\begin{equation}
\|D_y\Theta^\prime_u(\,\cdot\,;u)-
D_y\Theta^\prime_u(\,\cdot\,;v)\|_{L^2(Y^*)}\leq
C|u-v|(1+|u|+|v|)^{-1}, \label{Theta3}
\end{equation}
where $C$ does not depend on $u$, $v$. All these properties are
demonstrated similarly, e.g., we show (\ref{Theta1}) by using
(\ref{4}), (\ref{7}) and the Poincar\'e inequality
(\ref{poincare_per}) in $W^{1,2}_{per}(Y^*)$ (see Sec.
\ref{section6} ),
$$
\Bigl|\int_{Y^*}D_y\Theta\cdot D_y \Theta{\rm d} y\Bigr|=
\Bigl|\int_{S\cap Y}g(u,y)
\Bigl(\Theta-\frac{1}{|Y^*|}\int_{Y^*}\Theta\,{\rm d}
y\Bigr)\,{\rm d} y\Bigr|\leq C(|u|+1)\|D_y\Theta\|_{L^2(Y^*)}.
$$
The bounds (\ref{Theta1}) -
(\ref{Theta3}) in
conjunction with assumptions (\ref{4}) - (\ref{6})  imply

\begin{prop}
\label{continuityofM}
 The functional $M(U_0,U_1,V_0,V_1)$ defined by
(\ref{middletermrepresentation}) (or, equivalently, by the r.h.s.
of (\ref{middleterm})) is continuous in $W^{1,2}(\Omega)\times
L^{2}(\Omega;W^{1,2}_{per}(Y^*))\times W^{1,2}(\Omega)\times
L^{2}(\Omega;W^{1,2}_{per}(Y^*))$.
\end{prop}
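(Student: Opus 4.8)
The plan is to verify continuity of each of the three integrals in the representation (\ref{middletermrepresentation}) separately, treating them as functionals on the indicated product space, and relying on the regularity estimates (\ref{Theta1})--(\ref{Theta3}) for the auxiliary cell function $\Theta$ together with the growth/Lipschitz bounds (\ref{4})--(\ref{6}) on $g$. Throughout I fix a sequence $(U_0^{(k)},U_1^{(k)},V_0^{(k)},V_1^{(k)})$ converging to $(U_0,U_1,V_0,V_1)$ in $W^{1,2}(\Omega)\times L^2(\Omega;W^{1,2}_{per}(Y^*))\times W^{1,2}(\Omega)\times L^2(\Omega;W^{1,2}_{per}(Y^*))$, and I want the corresponding values $M(U_0^{(k)},\dots)$ to converge to $M(U_0,\dots)$.

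First I would dispose of the bulk term $\int_\Omega(g^*(U_0)\cdot D(U_0-V_0)+(U_0-V_0)(g^*)'(U_0)\cdot DU_0)\dd x$. Since $g^*(u)=\int_{Y^*}g(u,y)\,y\,\dd\sigma_y$, the bounds (\ref{4})--(\ref{5}) give that $g^*$ is globally Lipschitz and of linear growth, so by Nemytskii's theorem the superposition $U_0\mapsto g^*(U_0)$ is continuous from $L^2(\Omega)$ (hence from $W^{1,2}(\Omega)$, using also $U_0^{(k)}\to U_0$ a.e.\ along a subsequence and dominated convergence) into $L^2(\Omega;\D R^N)$; similarly (\ref{6}) controls $(g^*)'$, which is bounded and continuous, so $(g^*)'(U_0^{(k)})\to(g^*)'(U_0)$ in, say, every $L^p(\Omega)$ with $p<\infty$, in particular strongly in a space that pairs against the $L^2$-convergent gradients $DU_0^{(k)}\to DU_0$. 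Combining strong convergence of the nonlinear coefficients with strong $L^2$-convergence of $DU_0^{(k)}-DV_0^{(k)}$ and of $DU_0^{(k)}$, and with $L^2$-convergence of $U_0^{(k)}-V_0^{(k)}$, passes the limit in this term. The same scheme handles the third term: estimate (\ref{Theta3}) says $u\mapsto D_y\Theta'_u(\cdot;u)$ is continuous from $\D R$ into $L^2(Y^*)$ with a uniform modulus, so $D_y\Theta'_u(\cdot;U_0^{(k)})\to D_y\Theta'_u(\cdot;U_0)$ strongly in $L^2(\Omega\times Y^*)$ by dominated convergence; pairing this strongly convergent factor and the strongly convergent scalar $U_0^{(k)}-V_0^{(k)}$ against $D_yU_1^{(k)}\to D_yU_1$ strongly in $L^2(\Omega\times Y^*)$ closes this term.

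For the second term, $\int_\Omega\int_{Y^*}D_y\Theta(y;U_0)\cdot D_y(U_1-V_1)\dd y\dd x$, I would use (\ref{Theta2}): $u\mapsto D_y\Theta(\cdot;u)$ is globally Lipschitz into $L^2(Y^*)$, hence $D_y\Theta(\cdot;U_0^{(k)})\to D_y\Theta(\cdot;U_0)$ strongly in $L^2(\Omega\times Y^*)$ (the a.e.\ convergence of $U_0^{(k)}$ plus the Lipschitz bound plus (\ref{Theta1}) for domination), while $D_yU_1^{(k)}-D_yV_1^{(k)}\to D_yU_1-D_yV_1$ strongly in $L^2(\Omega\times Y^*)$ by hypothesis; the product of two strongly $L^2$-convergent factors converges in $L^1(\Omega\times Y^*)$, which gives convergence of the integral. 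In all three terms the structure is identical: a nonlinear coefficient depending only on $U_0$, shown to converge strongly by a Nemytskii/dominated-convergence argument powered by (\ref{Theta1})--(\ref{Theta3}) and (\ref{4})--(\ref{6}), paired against linear-in-the-data factors that converge strongly by assumption.

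The main obstacle is the justification of the strong $L^2$-convergence of the $U_0$-dependent coefficients, i.e.\ making the Nemytskii-type arguments rigorous: one must pass to a subsequence along which $U_0^{(k)}\to U_0$ a.e.\ in $\Omega$, invoke continuity of $u\mapsto D_y\Theta(\cdot;u)$, etc., from (\ref{Theta2})--(\ref{Theta3}), and produce an integrable dominating function — here (\ref{Theta1}) and the linear growth (\ref{4}) give $\|D_y\Theta(\cdot;U_0^{(k)}(x))\|_{L^2(Y^*)}^2\le C(|U_0^{(k)}(x)|^2+1)$, which is dominated in $L^1(\Omega)$ once $U_0^{(k)}\to U_0$ in $L^2(\Omega)$ (e.g.\ after extracting a further subsequence with an $L^2$-dominating majorant). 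Since the limit $M(U_0,U_1,V_0,V_1)$ is independent of the subsequence, the subsequence-and-diagonal argument upgrades to convergence of the full sequence, proving continuity. $\hfill\square$
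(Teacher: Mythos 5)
Your argument is correct and follows exactly the route the paper intends: the paper merely asserts that the bounds (\ref{Theta1})--(\ref{Theta3}) together with assumptions (\ref{4})--(\ref{6}) imply the continuity of $M$, and your term-by-term Nemytskii/dominated-convergence verification on the representation (\ref{middletermrepresentation}) is precisely the omitted detail. (Only a cosmetic remark: the boundedness of $(g^*)'$ comes from the Lipschitz bound (\ref{5}) rather than from (\ref{6}), which supplies its continuity.)
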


\section{Auxiliary results and proof of Theorem \ref{aprioriestimate}}
\label{section4}

{\bf 1}({\it Some inequalities}). Recall the classical
inequalities in Sobolev spaces,
\begin{equation}
\int_{S\cap Y}\bigl|v-\int_{Y} v{\rm d}x\bigr|^2{\rm d}\sigma\leq
C \int_{Y}|Dv|^2 {\rm d}x,\ \forall\ v\in W^{1,2}(Y)\ \text{(the
Poincar\'e inequality)}, \label{poincare}
\end{equation}
\begin{equation}
\int_{S\cap Y}|v|^2{\rm d}\sigma\leq C\int_{Y}(|v|^2+|Dv|^2) {\rm
d}x,\ \forall\ v\in W^{1,2}(Y)\ \text{(the trace inequality)}.
 \label{trace}
\end{equation}
By an easy scaling argument (\ref{poincare}), (\ref{trace}) lead
to the inequalities
\begin{equation} \int_{S_\ve}|v_\ve-\bar
v_\ve|^2{\rm d}\sigma\leq C\ve \int_{\Omega}|Dv_\ve|^2 {\rm d}x,
\label{rescaledpoincare}
\end{equation}
\begin{equation}
\int_{S_\ve}|v_\ve|^2{\rm d}\sigma\leq C\ve^{-1}\biggl(
\int_{\Omega}|v_\ve|^2 {\rm d}x +\ve^2 \int_{\Omega}|Dv_\ve|^2
{\rm d}x\biggr) \label{rescaledtrace},
\end{equation}
for any $v_\ve\in W^{1,2}(\Omega)$, where $\bar v_\ve$ stands for
piecewise constant function obtained by averaging over each cell
$Y_\ve^{(m)}$ (cf. (\ref{aver})), and $C$ depends only on $S$. We
also will make use of the following inequality, which is a simple
consequence of Jensen's inequality, for any $r\geq 1$,
\begin{equation}
\int_{S_\ve}|\bar v_\ve|^r{\rm d}\sigma\leq C\ve^{-1}
\int_{\Omega}|v_\ve|^r {\rm d}x, \label{rescaledtracemean}
\end{equation}
where $C>0$ is independent of $r$ and $v_\ve$.

\bigskip
\noindent {\bf 2}({\it An asymptotic representation for surface
integral in} (\ref{basicinequality}).
To pass to the limit as $\ve \to 0$  in the surface
integral in (\ref{basicinequality}) we use

\begin{lem} Let $u_\ve, w_\ve\in {W^{1,2}(\Omega)}$, then
\begin{multline}
\int_{S_\ve}g(u_\ve,x/\ve)w_\ve{\rm d}x=
\int_{S_\ve}g(\bar
u_\ve,x/\ve)(w_\ve-\bar w_\ve){\rm d}\sigma \\
+ \int_{S_\ve}g^\prime_u(\bar u_\ve,x/\ve)\bar w_\ve (u_\ve-\bar
u_\ve){\rm d}\sigma +\varrho_\ve, \label{regularrepresentation}
\end{multline}
and
\begin{equation}
|\varrho_\ve|\leq C\bigl(\ve+
(\ve\|w_\ve\|_{L^2(\Omega)})^{2/(N+2)} \bigr)
(\|w_\ve\|_{W^{1,2}(\Omega)}^2 +\|u_\ve\|_{W^{1,2}(\Omega)}^2).
\label{regularrepresentationremainder}
\end{equation}
\label{techlem1}
\end{lem}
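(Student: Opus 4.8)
The plan is to obtain \eqref{regularrepresentation} by Taylor-expanding $g(u_\ve,x/\ve)$ around the cell-average $\bar u_\ve$ and $w_\ve$ around $\bar w_\ve$ on each piece $S_\ve\cap \partial(\ve G+m\ve)$, then collecting the error terms and estimating them with the rescaled trace and Poincar\'e inequalities \eqref{rescaledpoincare}--\eqref{rescaledtracemean}. Concretely, I would first write the pointwise identity
\begin{multline*}
g(u_\ve,x/\ve)w_\ve=g(\bar u_\ve,x/\ve)(w_\ve-\bar w_\ve)+g(\bar u_\ve,x/\ve)\bar w_\ve\\
+g'_u(\bar u_\ve,x/\ve)(u_\ve-\bar u_\ve)\bar w_\ve
+\bigl(g(u_\ve,x/\ve)-g(\bar u_\ve,x/\ve)\bigr)(w_\ve-\bar w_\ve)\\
+\bigl(g(u_\ve,x/\ve)-g(\bar u_\ve,x/\ve)-g'_u(\bar u_\ve,x/\ve)(u_\ve-\bar u_\ve)\bigr)\bar w_\ve .
\end{multline*}
The term $\int_{S_\ve}g(\bar u_\ve,x/\ve)\bar w_\ve\,{\rm d}\sigma$ vanishes: on each cell $Y_\ve^{(m)}$ the quantities $\bar u_\ve,\bar w_\ve$ are constants, $y\mapsto g(\bar u_\ve,y)$ is $Y$-periodic, and by assumption (iv), $\int_{S\cap Y}g(c,y)\,{\rm d}\sigma_y=0$ for every $c\in\D{R}$; summing over $m\in I_\ve$ and rescaling gives zero. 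Hence $\varrho_\ve$ is the integral over $S_\ve$ of the last two lines above.

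For the remainder estimate \eqref{regularrepresentationremainder} I would treat the two pieces of $\varrho_\ve$ separately. For the first piece, $|g(u_\ve,x/\ve)-g(\bar u_\ve,x/\ve)|\le C_7|u_\ve-\bar u_\ve|$ by \eqref{5}, and Cauchy--Schwarz together with \eqref{rescaledpoincare} applied to both $u_\ve$ and $w_\ve$ gives a bound $C\ve\,\|Du_\ve\|_{L^2(\Omega)}\|Dw_\ve\|_{L^2(\Omega)}\le C\ve(\|u_\ve\|_{W^{1,2}}^2+\|w_\ve\|_{W^{1,2}}^2)$. For the second piece, the first-order Taylor remainder is controlled by \eqref{6}: since $|g'_u(u,y)-g'_u(v,y)|\le C_8|u-v|(1+|u|+|v|)^{-1}$, one has
$$
\bigl|g(u_\ve,x/\ve)-g(\bar u_\ve,x/\ve)-g'_u(\bar u_\ve,x/\ve)(u_\ve-\bar u_\ve)\bigr|\le C\,\frac{|u_\ve-\bar u_\ve|^2}{1+|\bar u_\ve|+|u_\ve|}\le C|u_\ve-\bar u_\ve|^2,
$$
and also (trivially, from \eqref{5}) it is $\le C|u_\ve-\bar u_\ve|$, so interpolating it is $\le C|u_\ve-\bar u_\ve|^{1+\theta}$ for any $\theta\in[0,1]$. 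Multiplying by $|\bar w_\ve|$, integrating over $S_\ve$, and applying H\"older with exponents chosen so that the $u$-factor is matched to $L^2(S_\ve)$ via \eqref{rescaledpoincare} and the $\bar w_\ve$-factor to \eqref{rescaledtracemean} (with a suitable $r$), produces the two competing rates: the ``$\ve$'' rate when the Poincar\'e gain dominates, and the ``$(\ve\|w_\ve\|_{L^2})^{2/(N+2)}$'' rate when one instead uses the trace-of-mean bound \eqref{rescaledtracemean} to absorb $\bar w_\ve$ in $L^2(\Omega)$ at the cost of a Sobolev embedding exponent $2^*$ in dimension $N$ (whence the exponent $2/(N+2)$); taking the better of the two gives \eqref{regularrepresentationremainder}.

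The main obstacle is the bookkeeping in this last H\"older splitting: one must choose the interpolation parameter $\theta$ and the exponent $r$ in \eqref{rescaledtracemean} so that (a) the powers of $\ve$ coming from \eqref{rescaledpoincare} and \eqref{rescaledtracemean} combine to exactly the claimed $\min(\ve,(\ve\|w_\ve\|_{L^2})^{2/(N+2)})$ behaviour, and (b) the resulting $L^p(\Omega)$-norms of $u_\ve$ and $w_\ve$ are all controlled by the $W^{1,2}(\Omega)$-norms through the Sobolev embedding $W^{1,2}(\Omega)\hookrightarrow L^{2^*}(\Omega)$ (here the hypothesis $N\ge2$ and the fact that $\ve\|w_\ve\|_{L^2}$ is the natural small parameter are essential). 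The constant $C$ in \eqref{rescaledtracemean} being independent of $r$ is exactly what makes this dimension-dependent interpolation permissible. Everything else — the Taylor identity, the cancellation of the mean term, and the first error piece — is routine.
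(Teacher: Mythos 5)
Your proposal is correct and follows essentially the same route as the paper: the same decomposition with the mean term killed by assumption (iv), the same $C\ve\|Du_\ve\|_{L^2}\|Dw_\ve\|_{L^2}$ bound for the cross term, and for the Taylor remainder your pointwise interpolation $\min(s,s^2)\le s^{1+\theta}$ with $\theta=2/(N+2)$ is just a repackaging of the paper's trick of bounding the ratio $t^{q'}|u_\ve-\bar u_\ve|^{2q'-2}(1+t|u_\ve-\bar u_\ve|)^{-q'}$ by one before applying H\"older with $q=2(N+2)/N$, followed by \eqref{rescaledpoincare}, \eqref{rescaledtracemean} and Young. Just note that producing the factor $\|w_\ve\|_{L^2}^{2/(N+2)}$ requires the multiplicative Gagliardo--Nirenberg interpolation inequality \eqref{interpolinequality} rather than the mere embedding $W^{1,2}(\Omega)\subset L^{2^*}(\Omega)$, and that your intermediate bound with $1+|\bar u_\ve|+|u_\ve|$ in the denominator is not literally what \eqref{6} yields uniformly in $t$ (though only its consequence $\le C|u_\ve-\bar u_\ve|^2$ is actually used).
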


\begin{proof}
We have,
\begin{multline*}
g(u_\ve,x/\ve)w_\ve=
g(\bar u_\ve,x/\ve)(w_\ve-\bar w_\ve)
+
(g(u_\ve,x/\ve) -(g(\bar u_\ve,x/\ve))
(w_\ve-\bar w_\ve)\\
+(g(u_\ve,x/\ve)-g(\bar u_\ve,x/\ve)) \bar w_\ve + g(\bar
u_\ve,x/\ve)\bar w_\ve,
\end{multline*}
therefore (in view of  (\ref{7}))
\begin{multline*}
\int_{S_\ve}g(u_\ve,x/\ve)w_\ve{\rm d}\sigma = \int_{S_\ve} g(\bar
u_\ve,x/\ve)(w_\ve-\bar w_\ve)
{\rm d}\sigma \\
+ \int_{S_\ve}
(g(u_\ve,x/\ve) -(g(\bar
u_\ve,x/\ve))(w_\ve-\bar w_\ve){\rm d}\sigma \\
+ \int_{S_\ve}(g(u_\ve,x/\ve)-g(\bar u_\ve,x/\ve))\bar w_\ve {\rm
d}\sigma=I_1+I_2+I_3.
\end{multline*}
The term $I_2$ gives vanishing contribution when $\ve \to 0$.
 Really, by (\ref{5}) and (\ref{rescaledpoincare}),
\begin{equation}
\label{otsenkaI2}
|I_2|\leq C\int_{S_\ve}|u_\ve-\bar u_\ve||w_\ve
-\bar w_\ve | {\rm d}\sigma
\leq C\ve\|Du_\ve\|_{L^2(\Omega)} \|Dw_\ve\|_{L^2(\Omega)}.
\end{equation}
The term $I_3$ can be written as
\begin{multline*}
I_3=\int_0^1{\rm d} t \int_{S_\ve}(g^{\prime}_u(\bar
u_\ve+t(u_\ve-\bar u_\ve),x/\ve)-g^{\prime}_u(\bar u_\ve,x/\ve))
\bar w_\ve(u_\ve-\bar u_\ve){\rm d}\sigma \\
+ \int_{S_\ve}g^\prime_u(\bar u_\ve,x/\ve)\bar w_\ve (u_\ve-\bar
u_\ve) {\rm d}\sigma=\tilde I_3+ \int_{S_\ve}g^\prime_u(\bar
u_\ve,x/\ve)\bar w_\ve (u_\ve-\bar u_\ve) {\rm d}\sigma
\end{multline*}
By using (\ref{6}) we get
$$
|\tilde I_3|\leq C\sup_{0\leq t\leq 1}
\int_{S_\ve}\frac{t|u_\ve-\bar u_\ve|^2|\bar w_\ve|} {1+|\bar
u_\ve|+|\bar u_\ve+t(u_\ve-\bar u_\ve)|}{\rm d}\sigma,
$$
which yields after applying the Holder inequality,
\begin{multline*}
|\tilde I_3|\leq  C\sup_{0\leq t\leq 1}
\int_{S_\ve}\frac{t|u_\ve-\bar u_\ve|^2|\bar
w_\ve|}{1+t|u_\ve-\bar u_\ve|}{\rm d}\sigma \leq
C\biggl(\int_{S_\ve}|\bar w_\ve|^{q}{\rm d}\sigma\biggr)^{1/q}
\\
\times\sup_{0\leq t\leq 1} \biggl(\int_{S_\ve}|u_\ve-\bar
u_\ve|^2\frac{t^{q^\prime}|u_\ve-\bar
u_\ve|^{2q^\prime-2}}{(1+t|u_\ve-\bar u_\ve|)^{q^\prime}}{\rm
d}\sigma\biggr)^{1/q^\prime},
\end{multline*}
where $q^\prime=q/(q-1)$ and $q=2(N+2)/N$. Note that the embedding
$W^{1,2}(\Omega)\subset L^q(\Omega)$ is compact, moreover one has
(see, e.g., \cite{L})
\begin{equation}
\exists C>0\ \text{such that}\ \|u\|_{L^q(\Omega)}\leq C
\|u\|_{W^{1,2}(\Omega)}^{2/q}\|u\|_{L^2(\Omega)}^{4/(Nq)}\ \forall
u\in W^{1,2}(\Omega). \label{interpolinequality}
\end{equation}
Since $1<q^\prime<2$, we have
$$\frac{t^{q^\prime}|u_\ve-\bar
u_\ve|^{2q^\prime-2}}{(1+t|u_\ve-\bar u_\ve|)^{q^\prime}}\leq
\frac{t^{2q^\prime-2}|u_\ve-\bar
u_\ve|^{2q^\prime-2}}{(1+t|u_\ve-\bar u_\ve|)^{2q^\prime-2}}
\frac{t^{2-q^\prime}}{(1+t|u_\ve-\bar u_\ve|)^{2-q^\prime}}\leq
1
$$
for any $0\leq t\leq 1$.
Therefore, by using (\ref{rescaledpoincare}),
(\ref{rescaledtracemean}) and (\ref{interpolinequality}) we get
\begin{multline}
\label{otsenkaI3}
|\tilde I_3| \leq
C\ve^{-1/q-1/q^\prime+2/q^\prime}
\|w_\ve\|_{L^{q}(\Omega)}\,\|Du_\ve\|^{2/q^\prime}_{L^2(\Omega)}\\
\leq C \ve^{2/(N+2)}
\|w_\ve\|_{W^{1,2}(\Omega)}^{2/q}\|w_\ve\|_{L^2(\Omega)}^{4/(Nq)}
\|Du_\ve\|^{2/q^\prime}_{L^2(\Omega)}\\
\leq C(\ve\|w_\ve\|_{L^2(\Omega)})^{2/(N+2)}
(\|w_\ve\|_{W^{1,2}(\Omega)}^{2}+\|Du_\ve\|^{2}_{L^2(\Omega)}),
\end{multline}
where we have used also the Young inequality. Bounds
(\ref{otsenkaI3}) and (\ref{otsenkaI3}) yield
(\ref{regularrepresentationremainder}) (since $|\varrho_\ve|\leq
|I_2|+|\tilde I_3|$). Lemma is proved.\end{proof}

The proof of the next technical result is
similar to Lemma \ref{techlem1} (and left to the reader).

\begin{lem} If $u_\ve,\ u_\ve^{(1)}\in W^{1,2}(\Omega)$,
$v_\ve\in L^\infty(\Omega)\cap W^{1,2}(\Omega)$, then setting
$w_\ve=u_\ve-u_\ve^{(1)}$ we have
\begin{equation*}
\biggl| \int_{S_\ve}(g(\bar u_\ve,x/\ve)-g(\bar
u_\ve^{(1)},x/\ve))(u_\ve-v_\ve-\bar u_\ve+\bar v_\ve){\rm
d}\sigma\biggr|  \leq C  \|w_\ve\|_{L^2(\Omega)}
\|D(u_\ve-v_\ve)\|_{L^2(\Omega)},
\end{equation*}
\begin{equation*}
\biggl| \int_{S_\ve}(g^\prime_u(\bar u_\ve,x/\ve)\bar u_\ve-
g^\prime_u(\bar u_\ve^{(1)},x/\ve)\bar u_\ve^{(1)})(u_\ve-\bar
u_\ve){\rm d}\sigma\biggr|  \leq C \|w_\ve\|_{L^2(\Omega)}
\|Du_\ve\|_{L^2(\Omega)},
\end{equation*}
\begin{equation*}
\biggl| \int_{S_\ve}(g^\prime_u(\bar u_\ve,x/\ve)- g^\prime_u(\bar
u_\ve^{(1)},x/\ve)))\bar v_\ve(u_\ve-\bar u_\ve){\rm
d}\sigma\biggr| \leq C \|w_\ve\|_{L^2(\Omega)}
\|v\|_{L^\infty(\Omega)}\|Du_\ve\|_{L^2(\Omega)}.
\end{equation*}
\label{techlem2}
\end{lem}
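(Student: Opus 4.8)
The plan is to handle all three inequalities by a single three-step scheme, the same one underlying Lemma~\ref{techlem1}: (1) estimate the relevant $g$- or $g'_u$-difference pointwise by $C|\bar w_\ve|$; (2) split the surface integral over $S_\ve$ by Cauchy--Schwarz; (3) apply the rescaled trace-mean inequality (\ref{rescaledtracemean}) with $r=2$ to the factor carrying $\bar w_\ve$ and the rescaled Poincar\'e inequality (\ref{rescaledpoincare}) to the remaining zero-mean factor. The powers $\ve^{-1/2}$ and $\ve^{1/2}$ produced in step (3) cancel, which yields the asserted $\ve$-independent bounds.

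For the first inequality, note that $\bar u_\ve - \bar u_\ve^{(1)} = \bar w_\ve$ and $u_\ve - v_\ve - \bar u_\ve + \bar v_\ve = (u_\ve - v_\ve) - \overline{(u_\ve - v_\ve)}$ by linearity of cell averaging; then (\ref{5}) gives $|g(\bar u_\ve,x/\ve) - g(\bar u_\ve^{(1)},x/\ve)| \leq C|\bar w_\ve|$. Cauchy--Schwarz reduces the left-hand side to a product of $\bigl(\int_{S_\ve}|\bar w_\ve|^2\,{\rm d}\sigma\bigr)^{1/2} \leq C\ve^{-1/2}\|w_\ve\|_{L^2(\Omega)}$, by (\ref{rescaledtracemean}), and $\bigl(\int_{S_\ve}|(u_\ve-v_\ve)-\overline{(u_\ve-v_\ve)}|^2\,{\rm d}\sigma\bigr)^{1/2} \leq C\ve^{1/2}\|D(u_\ve-v_\ve)\|_{L^2(\Omega)}$, by (\ref{rescaledpoincare}) applied to $u_\ve-v_\ve$.

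For the second inequality the only additional ingredient is the algebraic splitting $g'_u(\bar u_\ve,y)\bar u_\ve - g'_u(\bar u_\ve^{(1)},y)\bar u_\ve^{(1)} = \bigl(g'_u(\bar u_\ve,y) - g'_u(\bar u_\ve^{(1)},y)\bigr)\bar u_\ve + g'_u(\bar u_\ve^{(1)},y)\bar w_\ve$. The second summand is $\leq C|\bar w_\ve|$ since (\ref{5}) forces $|g'_u| \leq C_7$ wherever $g$ is differentiable; the first is $\leq C|\bar w_\ve|$ because (\ref{6}) gives $|g'_u(\bar u_\ve,y) - g'_u(\bar u_\ve^{(1)},y)|\,|\bar u_\ve| \leq C|\bar w_\ve|\,|\bar u_\ve|\,(1+|\bar u_\ve|)^{-1} \leq C|\bar w_\ve|$, the weight absorbing the unbounded factor $|\bar u_\ve|$. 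Steps (2)--(3), now with $u_\ve-\bar u_\ve$ in the Poincar\'e estimate, finish it. The third inequality is the simplest: (\ref{6}) gives directly $|g'_u(\bar u_\ve,y) - g'_u(\bar u_\ve^{(1)},y)| \leq C|\bar w_\ve|$, one factors out $|\bar v_\ve| \leq \|v_\ve\|_{L^\infty(\Omega)}$ (averaging does not increase the supremum norm), and steps (2)--(3) apply verbatim.

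The point requiring the most care --- though it is mild --- is the bookkeeping in the second inequality, where one must invoke (\ref{6}) in exactly the form $|g'_u(a,y)-g'_u(b,y)|\,|a| \leq C|a-b|$ so that the growth of $\bar u_\ve$ is controlled. It is worth stressing that, unlike the term $\tilde I_3$ in the proof of Lemma~\ref{techlem1}, none of the integrands here involves the quadratic combination $|u_\ve-\bar u_\ve|^2$: each is a product of one $\bar w_\ve$-type factor and one zero-mean $W^{1,2}$-oscillation, so plain Cauchy--Schwarz is enough and no interpolation inequality or fractional power of $\ve$ enters.
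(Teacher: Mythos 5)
Your proof is correct, and it follows precisely the route the paper intends: the authors leave Lemma~\ref{techlem2} to the reader as ``similar to Lemma~\ref{techlem1}'', and your argument is exactly that scheme --- pointwise control of the $g$- and $g'_u$-differences by $C|\bar w_\ve|$ via (\ref{5}), (\ref{6}), then Cauchy--Schwarz with (\ref{rescaledtracemean}) ($r=2$) on the averaged factor and (\ref{rescaledpoincare}) on the zero-mean factor, the powers $\ve^{\mp1/2}$ cancelling. Your closing observation is also accurate: since no quadratic term $|u_\ve-\bar u_\ve|^2$ appears here, the interpolation step used for $\tilde I_3$ in Lemma~\ref{techlem1} is indeed unnecessary.
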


\bigskip
\noindent {\bf 3}({\it Proof of Theorem} \ref{aprioriestimate}).
Assume by contradiction that there are sequences $\ve_k\to 0$,
$\lambda_k\to+\infty$ and $u_k\in W^{1,2}(\Omega_{\ve_k})$ such
that $\|u_k\|_{X_{\ve_k}}\to\infty$,
$$
\langle {\C A}_{\ve_k} (u_k),u_k \rangle_{\ve_k} +\lambda_k
\langle u_k,u_k\rangle_{\ve_k}- \langle{\C
G}_\ve(u_k),u_k\rangle_{\ve_k}\leq \delta_k
\|u_k\|^2_{X_{\ve_k}}$$ and $\delta_k\to 0$.
In
view of the definition of ${\C A}_{\ve}$ and ${\C G}_{\ve}$ this
implies that
\begin{equation*}
\int_{\Omega_{\ve_k}}(a(Dv_k, x/\ve)\cdot D v_k +\lambda_k
|v_k|^2){\rm d}x\leq \int_{S_\ve}g(v_k,x/\ve)v_k {\rm d}\sigma
+\delta_k\|v_k\|^2_{W^{1,2}(\Omega)} {\rm d}x,
\end{equation*}
where $v_k=P_{\ve_k}u_k$ is the extension of $u_k$ onto $\Omega$.
By using
(\ref{3}) and the properties of the extension operator $P_\ve$ we
then get, setting $w_k=v_k/\|v_k\|_{W^{1,2}(\Omega)}$,
\begin{equation}
\gamma\int_{\Omega}|Dw_k|^2{\rm d}x + \lambda_k
\int_{\Omega_{\ve_k}} |w_k|^2{\rm d}x
\leq\frac{1}{\|v_k\|_{W^{1,2}(\Omega)}}
\int_{S_{\ve_k}}g(v_k,x/\ve)w_k {\rm d}\sigma+\tilde \delta_k,
\label{A16}
\end{equation}
with some $\gamma>0$, where $\tilde
\delta_k=\delta_k+C/\|v_k\|^2_{W^{1,2}(\Omega)}\to 0$. Now write
\begin{multline}
\int_{S_{\ve_k}}g(v_k,x/\ve_k)w_k{\rm
d}\sigma=\int_{S_{\ve_k}}(g(v_k,x/{\ve_k})-g(\bar v_k,x/{\ve_k}))
w_k{\rm d}\sigma\\
+\int_{S_{\ve_k}}g(\bar v_k,x/\ve_k)(w-\bar w_k){\rm
d}\sigma=I_1+I_2, \label{bp1}
\end{multline}
where we have used (\ref{7}). We have, by (\ref{5}) and
(\ref{rescaledpoincare}),
\begin{multline}
|I_1|\leq C\int_{S_{\ve_k}}|v_k-\bar v_k| |w_k|{\rm d}\sigma \leq
C{\ve_k}^{1/2}\Bigl(\int_{\Omega}|Dv_k|^2 {\rm
d}x\Bigr)^{1/2}\Bigl(\int_{S_{\ve_k}}|w_k|^{2} {\rm
d}x\Bigr)^{1/2}
\\
\leq C \|Dv_k\|_{L^2(\Omega)}(\|w_k\|_{L^{2}(\Omega)}
+\ve_k\|Dw_k\|_{L^{2}(\Omega)}) \label{bp2}
\end{multline}
Similarly, by (\ref{4}) and (\ref{rescaledtracemean}),
\begin{equation}
|I_2|\leq C\int_{S_{\ve_k}}|w_k-\bar w_k| (|\bar v_k|+1) {\rm
d}\sigma \leq C \|Dw_k\|_{L^2(\Omega)}(\|v_k\|_{L^{2}(\Omega)}+1).
\label{bp3}
\end{equation}
Thus
\begin{equation}
\gamma\|Dw_k\|_{L^2(\Omega)}^2+ \lambda_k
\|w_k\|^2_{L^2(\Omega_{\ve_k})} \leq
C(\|w_k\|_{L^2(\Omega)}+\ve_k)+\tilde\delta_k,
\label{aprioriestimate1}
\end{equation}
where we have used the fact that $\|w_k\|_{W^{1,2}(\Omega)}=1$.
Therefore $\|w_k\|^2_{L^2(\Omega_{\ve_k})}\to 0$.

Due to the compactness of the embedding $W^{1,2}(\Omega)\subset
L^2(\Omega)$, up to a subsequence, $w_k\to w$ strongly in
$L^2(\Omega)$. On the other hand, according to the structure of
perforated domains $\Omega_\ve$,
$$
\int_{\Omega_{\ve_k}}w_kv{\rm d} x\to |Y^*|\int_{\Omega}wv {\rm d}
x\ \text{for any} \ v\in L^{2}(\Omega).
$$
By taking $v=w$ we get $w=0$ (since
$\|w_k\|_{L^2(\Omega_{\ve_k})}\to 0$) so that
$\|w_k\|_{L^2(\Omega)}\to 0$ . Then (\ref{aprioriestimate1})
yields $\gamma\|Dw_k\|_{L^2(\Omega)}\to 0$ and consequently
$\|w_k\|_{W^{1,2}(\Omega)}\to 0$, that is a contradiction.\hfill
$\square$

\bigskip As a byproduct of the above proof we have by (\ref{bp1}),
(\ref{bp2}), (\ref{bp3}), for any $u,v\in W^{1,2}(\Omega)$
\begin{equation}
|\langle{\C G}_\ve(u),v\rangle_\ve| \leq
C(\|u\|_{W^{1,2}(\Omega)}\|v\|_{L^2(\Omega)}+
\|v\|_{W^{1,2}(\Omega)}(\|u\|_{L^2(\Omega)}+1)+
\ve\|u\|_{W^{1,2}(\Omega)}\|v\|_{W^{1,2}(\Omega)}),
\label{boundforG1}
\end{equation}
where $C$ is independent of $\ve$. In particular,
\begin{equation}
\|{\C G}_\ve(u)\|_{X^*_\ve}\leq C(\|u\|_{X_\ve}+1), \forall u\in
X_\ve. \label{boundforG}
\end{equation}
Then we have, possibly modifying $\kappa_2$ in (\ref{coercive}),
\begin{equation}
\text{(\ref{coercive}) holds true for all}\ u_\ve\in X_\ve,
\label{coercivity}
\end{equation}
when $\ve\leq\ve_0$, $\lambda\geq \lambda_0$.

\section{Limit transition in the surface term and proof of Proposition \ref{convergtraces}}
\label{section5}

\noindent {\bf 1}({\it Proof of Proposition} \ref{convergtraces}).
Let $\Omega^\prime$ be a subdomain of $\Omega$ such that
$\overline{\Omega^\prime}\subset\Omega$, and let us define the
linear functional $b_\ve$ on $W^{1,2}(\Omega)$ by
\begin{equation}
b_\ve w_\ve =\int_{S_\ve^\prime}q(x,x/\ve)(w_\ve-\bar w_\ve)\,
{\rm d}\sigma. \label{introperator}
\end{equation}
where
$S_\ve^\prime=\bigcup_{m:\,Y_\ve^{(m)}\,\cap\,\Omega^\prime\not=\emptyset}S_\ve\cap
Y_\ve^{(m)}$. Clearly, $S_\ve^\prime\subset S_\ve$.

\bigskip

\noindent {\bf Step 1}(weak convergence of $b_\ve$). Let us show
that
\begin{equation}
\|b_\ve\|\leq C\ \text{with}\  C \ \text{independent of}\ \ve,
\label{boundedness}
\end{equation}
\begin{equation}
b_\ve w\to \int_{\Omega^\prime} \int_{Y\cap S} q(x,y) Dw(x)\cdot
y\,{\rm d}\sigma_y{\rm d}x\ \text{\rm weakly, as}\ \ve\to 0.
\label{traces1}
\end{equation}
We have by (\ref{rescaledpoincare}),
\begin{multline*}
|b_\ve w_\ve|\leq C \int_{S_\ve^\prime}|w_\ve-\bar w_\ve|{\rm
d}\sigma\leq C\ve^{-1/2}\Bigl(\int_{S_\ve^\prime}|w_\ve-\bar
w_\ve|^2{\rm d}\sigma\Bigr)^{1/2} \leq
C\|w_\ve\|_{W^{1,2}(\Omega)}.
\end{multline*}
Now chose an arbitrary $w$  from the dense (in $W^{1,2}(\Omega)$)
set $C^2(\overline{\Omega})$. We have
\begin{multline*}
b_\ve w=\sum_{m} \int_{S_\ve^\prime\cap Y^{(m)}_\ve}q(x,x/\ve) (D
w(x_\ve^{(m)})\cdot (x-x_\ve^{(m)})+O(\ve^2))\,{\rm d}\sigma
\\
=\sum_{m} \int_{S_\ve^\prime\cap Y^{(m)}_\ve}q(x_\ve^{(m)},x/\ve)
D w(x_\ve^{(m)})\cdot (x-x_\ve^{(m)})\,{\rm
d}\sigma+O(\ve) \\
=\int_{\Omega^\prime} \int_{Y\cap S} q(x,y) D w(x)\cdot y\,{\rm
d}\sigma_y{\rm d}x+o(1).
\end{multline*}
where $x_\ve^{(m)}$ is the center of the cell $Y^{(m)}_\ve$. Thus
(\ref{boundedness}) and (\ref{traces1}) are proved.

\bigskip

\noindent {\bf Step 2}(Proof of (\ref{traces}) for $w_\ve$ with
${\rm supp}(w_\ve)\subset\overline{\Omega^\prime}$). Assume now
that
\begin{equation}
w_\ve=0\ \text{in}\ \Omega\setminus\Omega^\prime\ \text{(in
particular} \ w_\ve=0\ \text{on}\ \partial\Omega^\prime\text{)} .
\label{zerotrace}
\end{equation}
Given $\delta>0$, let $\{Q_\delta^{(\alpha)}\}$ be an open cover
of $\Omega$, ${\rm diam} Q_\delta^{(\alpha)}\leq \delta$, and let
$\{\varphi_\delta^{(\alpha)}\in C^\infty(\D{R}^N)\}$ be a
partition of unity such that $$ {\rm
supp}\varphi_\delta^{(\alpha)}\subset Q_\delta^{(\alpha)}, \
0\leq\varphi_\delta^{(\alpha)}\leq 1,\
\sum_{\alpha}\varphi_\delta^{(\alpha)}=1.$$
Then we have
\begin{multline}
b_\ve w_\ve =\sum_{\alpha} \int_{S_\ve^\prime} q(\hat
x_\delta^{(\alpha)},x/\ve)(w_\ve-\bar
w_\ve)\varphi_\delta^{(\alpha)} {\rm d}\sigma \\
+ \sum_{\alpha} \int_{S_\ve^\prime} ( q(x,x/\ve)- q(\hat
x_\delta^{(\alpha)},x/\ve))(w_\ve-\bar
w_\ve)\varphi_\delta^{(\alpha)} {\rm d}\sigma=I_1+I_2,
\label{razlozhenie}
\end{multline}
where $\hat x_\delta^{(\alpha)}\in Q_\delta^{(\alpha)}$. Thanks to
the Lipschitz continuity of  $q(x,y)$ in $x$,
\begin{equation}
|I_2|\leq C\delta \sum_{\alpha} \int_{S_\ve^\prime}|w-\bar w_\ve|
\varphi_\delta^{(\alpha)}{\rm d}\sigma=
C\delta\int_{S_\ve^\prime}|w-\bar w_\ve|{\rm d}\sigma\leq
C\delta\|Dw_\ve\|_{L^2(\Omega)}. \label{vtoroi}
\end{equation}

We write the first term $I_1$ as
\begin{multline}
I_1=\sum_{\alpha} \Bigl( \int_{S_\ve^\prime}q(\hat
x^{(\alpha)}_\delta,x/\ve)w_\ve \varphi_\delta^{(\alpha)}{\rm
d}\sigma -\int_{S_\ve^\prime}q(\hat x^{(\alpha)}_\delta,x/\ve)\bar
w_\ve \varphi_\delta^{(\alpha)}{\rm d}\sigma\Bigr)=
 \sum_\alpha (\tilde I_1^{(\alpha)}+\hat I_1^{(\alpha)}).
\label{legko}
\end{multline}
Note that
\begin{equation*}
\int_{S_\ve^\prime\cap Y_\ve^{(m)}}q(\hat
x^{(\alpha)}_\delta,x/\ve) \varphi_\delta^{(\alpha)}{\rm d}\sigma=
\ve^N \Bigl( \int_{S\cap Y}q(\hat x^{(\alpha)}_\delta,y)
D\varphi_\delta^{(\alpha)}(x^{(m)}_\ve)\cdot y{\rm
d}\sigma_y+O(\ve)\Bigr)
\end{equation*}
(as above $x_\ve^{(m)}$ denotes the center of the cell
$Y_\ve^{(m)}$). Since $\bar w_\ve\to W_0(x)$ strongly in
$L^2(\Omega)$, we obtain
\begin{multline*}
\hat I_1^{(\alpha)}\to-\int_{\Omega^\prime}\Bigl(W_0(x)
 \int_{S\cap
Y}q(\hat x^{(\alpha)}_\delta,y) D\varphi_\delta^{(\alpha)}(x)\cdot
y{\rm d}\sigma_y \Bigr){\rm d}x
\\=
\int_{\Omega^\prime}\Bigl(\varphi_\delta^{(\alpha)}(x)
 \int_{S\cap
Y}q(x^{(\alpha)}_\delta,y) D W_0(x)\cdot y{\rm d}\sigma_y
\Bigr){\rm d}x ,
\end{multline*}
where we have used the fact that $W_0=0$ in
$\Omega\setminus\Omega^\prime$. Thus,
\begin{equation*}
\sum_{\alpha}\hat I_1^{(\alpha)}\to \int_{S\cap
Y}\Bigl(\sum_{\alpha}\int_{\Omega^\prime}\varphi_\delta^{(\alpha)}(x)
 q(\hat x^{(\alpha)}_\delta,y) D W_0(x)\cdot y{\rm d}x
\Bigr){\rm d}\sigma_y,
\end{equation*}
therefore
\begin{equation}
\lim_{\delta\to 0}\lim_{\ve\to 0} \sum_{\alpha}\hat
I_1^{(\alpha)}=\int_{\Omega^\prime}\int_{S\cap Y}
 q(x,y) D W_0(x)\cdot y{\rm d}\sigma_y
 {\rm d}x.
\label{25}
\end{equation}
In order to pass to the limit in $\tilde I_1^{(\alpha)}$ as
$\ve\to 0$, consider the solution $\theta$ of the problem
\begin{equation}
\begin{cases}
\Delta \theta(y)=0,\ \text{\rm in}\ Y^*;\\
\frac{\partial \theta}{\partial\nu}=q(\hat x^{(\alpha)}_\delta,y)\ \text{\rm on}\ S\cap Y;\\
\theta\ \text{\rm is}\ Y^*-\text{\rm periodic}.
\end{cases}
\label{theta}
\end{equation}
Thanks to the property (c) of $q(x,y)$ there is a unique (up to an
additive constant) solution $\theta$ of (\ref{theta}) and
$\theta\in W^{1,2}(Y^*)$. Set $\zeta_\ve(x)=\theta(x/\ve)$, then
we have $\Delta \zeta_\ve=0$ in $\Omega_\ve$ and
$\ve\frac{\partial \zeta_\ve}{\partial\nu}=q(\hat
x^{(\alpha)}_\delta,x/\ve)$ on $S_\ve^\prime$, so that
\begin{multline*}
\int_{S_\ve^\prime}q(\hat
x^{(\alpha)}_\delta,x/\ve)w_\ve\varphi_\delta^{(\alpha)}{\rm
d}\sigma
=\ve\int_{S_\ve^\prime}w_\ve\varphi_\delta^{(\alpha)}\frac{\partial
\zeta_\ve}{\partial\nu}{\rm d}\sigma\\
=\ve\int_{\Omega_\ve\cap\Omega^\prime}D(w_\ve\varphi_\delta^{(\alpha)})\cdot
D \zeta_\ve\,{\rm d}x=
\int_{\Omega_\ve\cap\Omega^\prime}D(w_\ve\varphi_\delta^{(\alpha)})\cdot
(D\theta)(x/\ve)\,{\rm d}x.
\end{multline*}
(we have taken into account here that $w_\ve=0$ on $\partial
\Omega^\prime$). One easily checks that
$D(w_\ve\varphi^{(\alpha)}_\delta)(x)\to
D(W_0\varphi^{(\alpha)}_\delta)(x)+\varphi^{(\alpha)}_\delta D_y
W_1(x,y)$ two-scale, therefore
\begin{multline*}
\tilde I_1^{(\alpha)} \to \int_{\Omega^\prime}\Bigl(\int_{Y^*}
(D(W_0\varphi_\delta^{(\alpha)})+\varphi_\delta^{(\alpha)}D_yW_1(x,y))\cdot
(D\theta)(y) \,{\rm d}y\Bigr)
\,{\rm d}x\\
=\int_{\Omega^\prime}\varphi_\delta^{(\alpha)}\Bigl(\int_{S\cap
Y}W_1(x,y)q(\hat x^{(\alpha)}_\delta,y) \,{\rm d}\sigma_y\Bigr)
\,{\rm d}x,
\end{multline*}
where we have used (\ref{theta}). Thus, taking into account the
Lipschitz continuity of $q(x,y)$ in $x$, we get, passing to the
limit as $\delta\to 0$,
\begin{multline}
\lim_{\delta\to 0}\lim_{\ve\to 0} \sum_{\alpha} \tilde
I_1^{(\alpha)}= \sum_{\alpha}
\int_{\Omega^\prime}\varphi_\delta^{(\alpha)}\Bigl(\int_{S\cap
Y}W_1(x,y)q(x,y) \,{\rm d}\sigma_y\Bigr) \,{\rm d}x
\\
=\int_{\Omega^\prime}\int_{S\cap
Y}W_1(x,y)q(x,y) \,{\rm d}\sigma_y {\rm d}x,
\label{rezultat}
\end{multline}
and we finally obtain by (\ref{razlozhenie}) -
(\ref{25}), (\ref{rezultat}),
\begin{equation}
\int_{S_\ve^\prime}q(x,x/\ve)(w_\ve-\bar w_\ve){\rm d}\sigma\to
\int_{\Omega^\prime} \int_{Y\cap S}q(x,y)(DW_0\cdot
y+W_1(x,y)){\rm d}\sigma_y {\rm d}x. \label{traces11}
\end{equation}

\bigskip
\noindent {\bf Step 3}(general case). Let $(w_\ve)$ be now an
arbitrary sequence such that $w_\ve\to W_0$ weakly in
$W^{1,2}(\Omega)$, and $Dw_\ve\to DW_0(x)+D_yW_1(x,y)$ two-scale.
Write $w_\ve=(w_\ve-(W_0 +w_\ve^{(1)}))+ w^{(1)}_\ve+W_0$, where
$w_\ve^{(1)}$ is the unique solution of the problem
\begin{equation*}
\begin{cases}
\Delta w_\ve^{(1)}= 0\ \text{\rm in}\ \Omega^\prime\\
w_\ve^{(1)}=w_\ve-W_0\ \text{\rm on}\ \partial\Omega^\prime,
\end{cases}
\end{equation*}
extended in $\Omega\setminus\Omega^\prime$ by setting
$w_\ve^{(1)}=w_\ve-W_0$.  Since $w_\ve-W_0\to 0$ weakly in
$H^{1/2}(\partial\Omega^\prime)$, we have
\begin{equation}
w_\ve^{(1)}\to 0 \ \text{\rm strongly in}\ W^{1,2}(K) \ \text{for
any compact}\ K\subset \Omega^\prime, \label{strong}
\end{equation}
by standard elliptic estimates. This implies, in particular, that
$w_\ve^{(1)}\to 0$, $Dw_\ve^{(1)}\to 0$ two-scale. Moreover in
view of (\ref{rescaledpoincare}), for any compact subset $K$ of
$\Omega^\prime$,
\begin{multline}
|b_\ve w_\ve^{(1)}| \leq C\sum_{m:Y^{(m)}_\ve\cap K\not=\emptyset}
\int_{Y^{(m)}_\ve}|w_\ve^{(1)}-\bar w_\ve^{(1)}|{\rm d}\sigma+
C\sum_{m:Y^{(m)}_\ve\cap
K=\emptyset}\int_{Y^{(m)}_\ve\cap\Omega^\prime} |w_\ve^{(1)}-\bar
w_\ve^{(1)}|{\rm d}\sigma
\\
\leq C \Bigl(\int_{K_\delta}|D w_\ve^{(1)}|^2{\rm d}x\Bigr)^{1/2}+
C|\Omega^\prime_\delta\setminus K|^{1/2}
\Bigl(\int_{\Omega}|Dw_\ve^{(1)}|^2{\rm d}x\Bigr)^{1/2},
\label{30}
\end{multline}
when $\ve\leq \delta/N$ , where $C$ is independent of $\ve$ and
$\delta$, $K_\delta,\Omega^\prime_\delta$ are the
$\delta$-neighborhoods of $K$ and $\Omega^\prime$, respectively,
and $\delta>0$ is arbitrary. (The summation in (\ref{30}) is taken
over $m$ such that $Y_\ve^{(m)}\cap\Omega^\prime\not=\emptyset$.)
It follows from (\ref{strong}), (\ref{30}) that $b_\ve
w_\ve^{(1)}\to 0$ as $\ve\to 0$, while, according to the first and
second steps,
$$
b_\ve W_0\to \int_{\Omega^\prime} \int_{Y\cap S}q(x,y)DW_0\cdot
y{\rm d}\sigma_y {\rm d}x,
$$
and
$$
b_\ve (w_\ve-(W_0+ w_\ve^{(1)}))\to \int_{\Omega^\prime}
\int_{Y\cap S}q(x,y)W_1(x,y){\rm d}\sigma_y {\rm d}x.
$$
Thus (\ref{traces11}) is proved for any sequence $(w_\ve)$ such
that (\ref{convergwve}) holds.

\bigskip
\noindent {\bf Final step}. Set $\Omega^\prime=\{x\in\Omega; {\rm
dist}(x,\partial\Omega)>\delta\}$, where $\delta>0$. By using
(\ref{rescaledpoincare}) we have,
\begin{equation}
\int_{S_\ve\setminus S_\ve^\prime}|w_\ve-\bar w_\ve|{\rm
d}\sigma\leq C\frac{\delta^{1/2}}{\ve^{1/2}}
\Bigl(\int_{S_\ve\setminus S_\ve^\prime}|w_\ve-\bar w_\ve|^2{\rm
d}\sigma\Bigr)^{1/2}\leq
 C\delta^{1/2}
\|w_\ve\|_{W^{1,2}(\Omega)}, \label{vozlegranitsy}
\end{equation}
for sufficiently small $\ve$, where $C$ is independent of $\delta$
and $\ve$.  Therefore (\ref{vozlegranitsy}) combined with
(\ref{traces11}) yield (\ref{traces}) for any sequence $(w_\ve)$
such that (\ref{convergwve}) holds. \hfill$\square$

\bigskip

\noindent
{\bf 2}({\it Proof of} (\ref{middleterm})). We
approximate $U_0$ by functions $u^{(1)}_\delta\in
C^1(\overline{\Omega})$ ($\delta>0$) in the strong topology of
$L^2(\Omega)$, $\|U_0-u^{(1)}_\delta\|_{L^2(\Omega)}\leq \delta$.
By virtue of Lemma \ref{techlem1}, the strong-$L^2$ convergence of
$u_\ve$ to $U_0$ and Lemma \ref{techlem2} we then have
\begin{multline}
\limsup_{\ve\to 0} \biggl|
\int_{S_\ve}g(u_\ve,x/\ve)(u_\ve-v_\ve){\rm d}\sigma-
\int_{S_\ve}g(\bar
u_\delta^{(1)},x/\ve)(u_\ve-v_\ve-\bar u_\ve+\bar v_\ve){\rm d}\sigma \biggr.\\
\biggl.- \int_{S_\ve}g^\prime_u(\bar u_\delta^{(1)},x/\ve) (\bar
u_\delta^{(1)}-\bar v_\ve)(u_\ve-\bar u_\ve){\rm
d}\sigma\biggr|\leq C\delta. \label{pervoedeistvie}
\end{multline}
On the other hand, the regularity of $g(u,y)$ in $u$ (conditions
(\ref{4}), (\ref{5}), (\ref{6})) implies the pointwise bounds
$$|g(\bar
u_\delta^{(1)},x/\ve)-g(u_\delta^{(1)},x/\ve)| \leq C\ve\
\text{on}\ S_\ve ,
$$
$$
|g^\prime_u(\bar u_\delta^{(1)},x/\ve) (\bar u_\delta^{(1)}-\bar
v_\ve)- g^\prime_u(u_\delta^{(1)},x/\ve) ( u_\delta^{(1)}-V_0)|
\leq C\ve \ \text{on}\ S_\ve
$$ (recall that $v_\ve=V_0(x)+\ve V_1(x,x/\ve)$, and
$V_0$, $V_1$ are smooth functions), which, by using
(\ref{rescaledpoincare}), lead to
\begin{multline}
\limsup_{\ve\to 0} \biggl| \int_{S_\ve} (g(\bar
u_\delta^{(1)},x/\ve)-g(u_\delta^{(1)},x/\ve))
(u_\ve-v_\ve-\bar u_\ve+\bar v_\ve){\rm d}\sigma \biggr.\\
\biggl.+ \int_{S_\ve}( g^\prime_u(\bar u_\delta^{(1)},x/\ve) (\bar
u_\delta^{(1)}-\bar v_\ve)- g^\prime_u(u_\delta^{(1)},x/\ve) (
u_\delta^{(1)}-V_0))(u_\ve-\bar u_\ve){\rm d}\sigma\biggr|=0.
\label{vtoroedeistvie}
\end{multline}
Now, applying Proposition \ref{convergtraces} first with
$q(x,y)=g(u^{(1)}_\delta(x),y)$, $w_\ve=u_\ve-v_\ve$, then with
$q(x,y)=g(u^{(1)}_\delta(x),y)u^{(1)}_\delta(x)$, $w_\ve=u_\ve$,
and finally with $q(x,y)=g(u^{(1)}_\delta(x),y)V_0(x)$,
$w_\ve=u_\ve$, we get
\begin{multline}
\int_{S_\ve} \bigl(g(u_\delta^{(1)},x/\ve)(u_\ve-v_\ve-\bar
u_\ve+\bar v_\ve) + g^\prime_u(u_\delta^{(1)},x/\ve) (
u_\delta^{(1)}-V_0)(u_\ve-\bar u_\ve)\bigr){\rm d}\sigma\\
\to \int_\Omega\int_{S\cap Y} g(u_\delta^{(1)},y)(D(U_0-V_0)\cdot
y+
U_1(x,y)-V_1(x,y)){\rm d}\sigma_y{\rm d}x\\
+\int_\Omega\int_{S\cap Y}
g^\prime_u(u_\delta^{(1)},y)(u_\delta^{(1)}-V_0)(DU_0\cdot y+
U_1(x,y)){\rm d}\sigma_y{\rm d}x. \label{tretedeistvie}
\end{multline}
Assuming  $\delta\to 0$ in (\ref{pervoedeistvie}),
(\ref{vtoroedeistvie}), (\ref{tretedeistvie}) yields
(\ref{middleterm}). \hfill$\square$


\section{Homogenization of the parabolic problem (\ref{eq2})}
\label{section6}

In terms of the operators ${\C A}_\ve$ and ${\C G}_\ve$ problem
(\ref{eq2}) is as follows
\begin{equation}
\begin{cases}
\partial_t u_\ve(t)+{\C A}_\ve(u_\ve(t))-{\C
G}_\ve(u_\ve(t))=f(t),\  t>0 \\
u_\ve(0)=\tilde u.
\end{cases}
\label{parabolic}
\end{equation}
We study the asymptotic behavior of solutions $u^\ve$ of
(\ref{parabolic}) as $\ve\to 0$ adapting the notion of two-scale
convergence to functions depending on the time variable $t$ which
is treated as a parameter. Namely, following \cite{CP} we say that
\begin{equation}
\begin{array}{l}
 \text{the sequence}\ v_\ve=v_\ve(x,t)\ \text{which is
bounded in}\ L^2(\Omega\times [0,T]) \\ \text{two-scale}\
\text{converges to}\ V_0(x,y,t)\ \text{if} \\
\displaystyle \int_{0}^T\int_\Omega v_\ve\phi(x,x/\ve,t)\,{\rm
d}x{\rm d} t\to \int_{0}^T\int_{Y}\int_\Omega V_0\phi(x,y,t)
\,{\rm d}x{\rm d} y{\rm d} t,\\
\text{for any}\ Y-\text{periodic in}\ y\  \text{function}\
\phi(x,y,t)\in C^\infty(\Omega\times Y\times [0,T]).
\end{array}
\label{parametertwoscale}
\end{equation}
The basic properties of the convergence (\ref{parametertwoscale})
are similar to that of the standard two-scale convergence. Namely,
any bounded in $L^2(\Omega\times [0,T])$ sequence has a
subsequence converging in the sense of (\ref{parametertwoscale});
if $\|v_\ve\|_{L^2(0,T;W^{1,2}(\Omega))}\leq C$ then, up to
extracting a subsequence, $v_\ve$ and $Dv_\ve$ converge in the
sense of (\ref{parametertwoscale}) to $V_0$ and
$DV_0(x,t)+D_yV_1(x,y,t)$ correspondingly, where $V_0\in
L^2(0,T;W^{1,2}(\Omega))$, $V_1\in
L^2([0,T]\times\Omega;W^{1,2}_{per}(Y))$. Note, however, that
(\ref{parametertwoscale}) does not imply, in general, that
$v_\ve(\, \cdot\,,t)$ converges in two-scale sense for a.e. $t\in
[0,T]$, but rather
$$
\int_\alpha^\beta v_\ve {\rm d} t\to \int_\alpha^\beta V_0 {\rm d}
t\ \text{two scale for all}\ 0\leq\alpha<\beta\leq T.
$$


\bigskip

\noindent {\bf 1} ({\it Well-posedness of problem}
(\ref{parabolic})). Given $T>0$, let us show that problem
(\ref{parabolic}) has a unique solution on the time interval
$[0,T]$. To this end we first note that the operator ${\C
A}_\ve(u)-{\C G}_\ve(u)+\tilde\lambda u$ becomes monotone if one
chooses a suitable $\tilde\lambda>0$ (depending on $\ve$). Indeed,
 by using (\ref{5}) we get
\begin{multline}
\langle{\C G}_\ve(u)-{\C G}_\ve(v),u-v\rangle_{\ve}\leq C
\int_{S_\ve}| u-v|^2{\rm d}\sigma\\
\leq \kappa/2\|D(u-v)\|^2_{L^{2}(\Omega_\ve)}+
\Gamma_\ve\|u-v\|^2_{L^{2}(\Omega_\ve)},\ \forall u,v\in
W^{1,2}(\Omega_\ve). \label{podporka}
\end{multline}
where $\kappa$ is the constant appearing in (\ref{monoton}), and
$\Gamma_\ve$ is independent of $u_\ve$ and $v_\ve$ (the last
inequality in (\ref{podporka}) is due the compactness of the trace
operator $T_\ve:W^{1,2}(\Omega_\ve)\to L^2(S_\ve)$, $T_\ve
w=\text{trace of}\ w\ \text{on}\ S_\ve$ ). Then, setting $\tilde
\lambda=\Gamma_\ve+1$, by (\ref{monoton}) and (\ref{podporka}) one
easily verifies that
\begin{equation}
\text{the operator}\ u\mapsto
{\C A}_\ve(u)-{\C
G}_\ve(u)+\tilde\lambda u\ \text{is monotone}
\label{shiftedmonotonicity}
\end{equation}

By changing the unknown $v_\ve=e^{-\tilde\lambda t} u_\ve$  problem (\ref{parabolic})
is reduced to the evolution problem for
the equation $\partial_t v_\ve(t)+\tilde{\C A}_\ve(v_\ve(t),t)-\tilde{\C
G}_\ve(v_\ve(t),t)+\tilde\lambda v_\ve=e^{-\tilde\lambda t} f(t),\  t>0$ with
the initial condition $v_\ve(0)=\tilde u$, where
$\tilde{\C A}_\ve: v\mapsto e^{-\tilde\lambda t}{\C A}_\ve(e^{\tilde\lambda t}v)$  and
$\tilde{\C G}_\ve: v\mapsto e^{-\tilde\lambda t}{\C G}_\ve(e^{\tilde\lambda t}v)$.
By the standard theory of parabolic
problems for monotone operators
(see, e.g. \cite{S}) it follows from  (\ref{shiftedmonotonicity}), (\ref{monoton})
and (\ref{podporka})
that the latter problem has a unique solution on $[0,T]$ as far as
$f\in L^2([0,T];X_\ve^*)$ and $\tilde u\in L^2(\Omega)$.

\bigskip

\noindent {\bf 2} ({\it Uniform a-priori bounds}). Let us show
that for any $T>0$ the solution $u_\ve$ of (\ref{parabolic})
satisfies the following bounds for sufficiently small $\ve$,
\begin{equation}
\|\partial_t u_\ve\|^2_{L^2(0,T; X^*_\ve)},\|u_\ve\|^2_{L^2(0,T;
X_\ve)}\leq C ( \langle \tilde u,\tilde
u\rangle_\ve+\|f\|^2_{L^2(0,T; X^*_\ve)}+1),
\label{uniformaprioriparabolic1}
\end{equation}
with a constant $C$ independent of $\ve$. Let $\ve_0$, $\lambda_0$
be as in Theorem \ref{aprioriestimate}. From (\ref{parabolic}) we
have, for $\ve\leq\ve_0$
\begin{multline}
\langle u_\ve(t),u_\ve(t) \rangle_{\ve}+ 2\int_{0}^{t} \langle{\C
A}_\ve(u_\ve(\tau))+ {\C G}_\ve(u_\ve(\tau))+ \lambda_0
u_\ve(\tau),u_\ve(\tau)\rangle_{\ve}{\rm d}\tau\\
=\langle \tilde u,\tilde u\rangle_{\ve}+2\int_{0}^{t} \langle
f(\tau)+\lambda_0u_\ve(\tau),u_\ve(\tau) \rangle_{\ve}{\rm d}\tau.
\label{ravenstvo}
\end{multline}
Then (\ref{ravenstvo}) combined with (\ref{coercivity}) yields
\begin{multline}
\langle u_\ve(T^\prime),u_\ve(T^\prime)\rangle_{\ve}+
2\kappa_1\|u_\ve\|^2_{L^2(0,T^\prime; X_\ve)}\leq \langle \tilde
u,\tilde u\rangle_\ve +\|f\|_{L^2(0,T^\prime; X^*_\ve)}
\|u_\ve\|_{L^2(0,T^\prime;
X_\ve)}\\
+2T^\prime\kappa_2 +2\lambda_0 \int_{0}^{T^\prime} \langle
u_\ve(t),u_\ve(t)\rangle_{\ve}{\rm d}t, \ \forall \ 0\leq
T^\prime\leq T. \label{anotherbound}
\end{multline}
Therefore
\begin{equation}
\langle u_\ve(T^\prime),u_\ve(T^\prime) \rangle_{\ve}\leq
e^{2\lambda_0 T^\prime}(\langle \tilde u,\tilde u
\rangle_\ve
+\frac{1}{\kappa_1} \|f\|^2_{L^2(0,T^\prime; X^*_\ve)}
+2T^\prime\kappa_2), \label{anotherbound1}
\end{equation}
combined with  (\ref{anotherbound}) this implies the second bound
in (\ref{uniformaprioriparabolic1}); while $\|\partial_t
u_\ve\|_{L^2(0,T; X^*_\ve)}\leq \|{\C A}_\ve(u_\ve)\|_{L^2(0,T;
X^*_\ve)}+\|{\C G}_\ve(u_\ve)\|_{L^2(0,T;
X^*_\ve)}+\|f\|_{L^2(0,T; X^*_\ve)}$ and thus the first bound in
(\ref{uniformaprioriparabolic1}) is a consequence of the second
one and (\ref{boundforG}).

\bigskip

\noindent {\bf 3} ({\it Homogenization of problem}
(\ref{parabolic})). Let $u_\ve$ be continued in $x$ variable onto
$\Omega$ by using the extension operator $P_\ve$, then the
resulting function, still denoted $u_\ve$, satisfies
\begin{equation}
\|u_\ve(t)\|_{L^2(\Omega)}\leq C\ \text{for all}\ t\in[0,T], \
\text{and}\ \|u_\ve\|_{L^2(0,T;W^{1,2}(\Omega))}\leq C,
\label{uniformaprioriparabolicextended1}
\end{equation}
with a constant $C$ independent of $\ve$. This implies that, up to
extracting a subsequence,
\begin{equation}
u_\ve\to U_0(x,t)\ \text{two-scale (in the sense of
(\ref{parametertwoscale})) and weakly in}\ L^2(0,T;
W^{1,2}(\Omega)), \label{2scale1}
\end{equation}
\begin{equation}
D_xu_\ve\to D_xU_0(x,t)+D_yU_{1}(x,y,t)\ \text{two-scale (in the
sense of (\ref{parametertwoscale}))}, \label{2scale2}
\end{equation}
where $U_0\in L^2(0,T;W^{1,2}(\Omega))$, $U_1\in
L^2(0,T;L^2(\Omega;W^{1,2}_{per}(Y)))$. Besides, if we set $\hat
u_\ve=u_\ve$ when $x\in\Omega_\ve$ and $\hat u_\ve=0$ when $x\in
\Omega\setminus \Omega_\ve$, then (\ref{2scale1}) yields that
$\hat u_\ve\to |Y^*|U_0(x,t)$ weakly in $L^2(0,T;L^{2}(\Omega))$.

Let $X=W^{1,2}(\Omega)$ an let $X^*$ be its dual with respect to
the duality pairing
$$
\langle u,v\rangle=|Y^*|\int_\Omega uv {\rm d}x.
$$
Show that $U_0\in W^{1,2}(0,T;X^*)$, and $\hat u_\ve(t)\to
|Y^*|U_0(t)$ weakly in $L^2(\Omega)$ for all $0\leq t\leq T$. From
(\ref{2scale1}) we have, for any $\phi\in X$ and $\varphi\in
C^\infty_0([0,T])$,
\begin{equation}
 \int_0^T \langle\partial_t u_\ve, \phi\rangle_\ve\varphi(t){\rm d} t=
-\int_0^T \langle u_\ve, \phi\rangle_\ve\varphi^\prime (t){\rm d}
t\to-\int_0^T \langle U_0, \phi\rangle\varphi^\prime (t){\rm d} t.
\label{smooth1}
\end{equation}
On the other hand, by using (\ref{uniformaprioriparabolic1}), we
get
\begin{equation} \biggl|\int_0^T \langle\partial_t u_\ve,
\phi\rangle_\ve\varphi(t){\rm d} t\biggr|^2\leq C\int_0^T
\|\phi\|^2_{X_\ve}|\varphi(t)|^2{\rm d} t\leq
C\|\varphi\phi\|^2_{L^2(0,T;X)} . \label{smooth2}
\end{equation}
Then (\ref{smooth1}), (\ref{smooth2}) show that $U_0\in
W^{1,2}(0,T;X^*)$. According to (\ref{anotherbound1}), the norms
$\|\hat u_\ve(t)\|_{L^2(\Omega)}$ are uniformly in
$0<\ve\leq\ve_0$ and $t\in[0,T]$ bounded. Thus, to prove that
$\hat u_\ve(t)\to |Y^*|U_0(t)$ weakly in $L^2(\Omega)$ for every
$t\in [0,T]$ it suffices to show that
\begin{equation}
\langle u_\ve(t), \phi\rangle_\ve\to \langle U_0(t),\phi\rangle \
\text{for any} \ \phi\in X. \label{weakpointwise}
\end{equation}
By the first bound in (\ref{uniformaprioriparabolic1}) we have
$|\langle u_\ve(t)-u_\ve(t^\prime), \phi\rangle_\ve|\leq
C|t-t^\prime|^{1/2}\|\phi\|_X$, on the other hand
(\ref{weakpointwise}) holds in the sense of weak star convergence
in $L^\infty(0,T)$ since $\hat u_\ve\to |Y^*|U_0(x,t)$ weakly in
$L^2(0,T;L^{2}(\Omega))$. Thus (\ref{weakpointwise}) holds for any
$t\in [0,T]$, so that $\forall t\in [0,T]$ $\hat u_\ve(t)\to
|Y^*|U_0(t)$ weakly in $L^2(\Omega)$, in particular,
\begin{multline}
\liminf_{\ve\to 0}\langle u_\ve(T),u_\ve(T)\rangle_\ve
=\liminf_{\ve\to
0}\int_{\Omega_\ve}((u_\ve(T)-U_0(T))^2-U_0^2(T))\, {\rm d}x\\
+2 \lim_{\ve\to 0}\int_{\Omega}\hat u_\ve(T)U_0(T)\, {\rm d}x
=\liminf_{\ve\to 0}\int_{\Omega_\ve}(u_\ve(T)-U_0(T))^2\, {\rm d}x
+\langle U_0(T),U_0(T)\rangle
\\
\geq \langle U_0(T),U_0(T)\rangle,
\label{lowersemicont1}
\end{multline}
and, clearly,
\begin{multline}
\langle u_\ve(T),v_\ve\rangle_\ve \to \langle U_0(T),V_0\rangle, \
\text{for any sequence}\ v_\ve\to V_0 \ \text{strongly in}\
L^2(\Omega). \label{lowersemicont2}
\end{multline}

\begin{lem}
\label{lemstrongL2conver}
 If $(u_\ve)$ is such a (sub)sequence
of solutions of (\ref{parabolic}) that (\ref{2scale1}) holds, then
\begin{equation}
\|u_\ve-U_0\|_{L^2(\Omega\times [0,T])}\to 0 \ \text{as}\ \ve\to
0. \label{strongL2conver}
\end{equation}
\end{lem}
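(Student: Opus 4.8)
The plan is to prove (\ref{strongL2conver}) in three moves: first, promote the weak convergence of the interior functions $\hat u_\ve$ to strong convergence in a negative norm by an Aubin--Lions--Simon compactness argument; second, deduce from this that $\int_0^T\!\int_{\Omega_\ve}u_\ve^2\,\dd x\dd t\to|Y^*|\int_0^T\!\int_\Omega U_0^2\,\dd x\dd t$; and third, transfer the resulting strong convergence from $\Omega_\ve$ to all of $\Omega$ by a rescaled Poincar\'e inequality on the perforation cells.

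\textbf{Step 1.} Put $X=W^{1,2}(\Omega)$ and let $X^*$ be its dual, as in the text. I would first show that $\|\partial_t\hat u_\ve\|_{L^2(0,T;X^*)}\le C$ uniformly in $\ve$. Indeed, testing (\ref{parabolic}) against $\phi\in X$ and using the definition of $\hat u_\ve$ one expresses $\langle\partial_t\hat u_\ve(t),\phi\rangle$ through $\langle f(t)-{\C A}_\ve(u_\ve(t))+{\C G}_\ve(u_\ve(t)),\,\phi|_{\Omega_\ve}\rangle_\ve$; the $2$-growth bound (\ref{3}), the estimate (\ref{boundforG}) and $\|\phi|_{\Omega_\ve}\|_{X_\ve}\le\|\phi\|_X$ then give $|\langle\partial_t\hat u_\ve(t),\phi\rangle|\le C\bigl(\|u_\ve(t)\|_{W^{1,2}(\Omega)}+\|f(t)\|_{L^2(\Omega)}+1\bigr)\|\phi\|_X$, and integrating in $t$ with the a priori bound (\ref{uniformaprioriparabolicextended1}) yields the claim. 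Since $\hat u_\ve$ is also bounded in $L^2(0,T;L^2(\Omega))$ and the inclusion $L^2(\Omega)\hookrightarrow X^*$ is compact (being the adjoint of the compact embedding $W^{1,2}(\Omega)\hookrightarrow L^2(\Omega)$), the Aubin--Lions--Simon lemma makes $\{\hat u_\ve\}$ relatively compact in $L^2(0,T;X^*)$; as $\hat u_\ve\to|Y^*|U_0$ weakly in $L^2(0,T;L^2(\Omega))$, the only possible limit is $|Y^*|U_0$, so $\hat u_\ve\to|Y^*|U_0$ strongly in $L^2(0,T;X^*)$.

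\textbf{Step 2.} Next I would pair this with the weak convergence $u_\ve\rightharpoonup U_0$ in $L^2(0,T;X)$. A sequence converging strongly in $L^2(0,T;X^*)$, paired with one converging weakly in $L^2(0,T;X)$, passes to the limit, and since $\langle\hat u_\ve,u_\ve\rangle=|Y^*|\int_{\Omega_\ve}u_\ve^2$ while $\langle|Y^*|U_0,U_0\rangle=|Y^*|^2\int_\Omega U_0^2$, this gives $\int_0^T\!\int_{\Omega_\ve}u_\ve^2\to|Y^*|\int_0^T\!\int_\Omega U_0^2$. Expanding $\int_0^T\!\int_{\Omega_\ve}(u_\ve-U_0)^2$ and using, in addition, that $\int_0^T\!\int_{\Omega_\ve}u_\ve U_0=\int_0^T\!\int_\Omega\hat u_\ve U_0\to|Y^*|\int_0^T\!\int_\Omega U_0^2$ (weak $L^2$-convergence of $\hat u_\ve$) and that $\int_0^T\!\int_{\Omega_\ve}U_0^2=\int_0^T\!\int_\Omega\chi_\ve U_0^2\to|Y^*|\int_0^T\!\int_\Omega U_0^2$ (since $\chi_\ve\rightharpoonup|Y^*|$ weakly-$*$ in $L^\infty(\Omega)$), I obtain $\|u_\ve-U_0\|_{L^2(\Omega_\ve\times[0,T])}\to0$.

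\textbf{Step 3 and the main obstacle.} To pass from $\Omega_\ve$ to $\Omega$, I would rescale the elementary inequality $\int_G|w|^2\le C\bigl(\int_Y|Dw|^2+\int_{Y^*}|w|^2\bigr)$, valid for $w\in W^{1,2}(Y)$ (a consequence of the Poincar\'e--Wirtinger inequality on $Y$), apply it on each cell $Y_\ve^{(m)}$ to $w=u_\ve(t)-U_0(t)$, and sum over $m\in I_\ve$ to get $\int_{\Omega\setminus\Omega_\ve}|u_\ve(t)-U_0(t)|^2\le C\ve^2\int_\Omega|D(u_\ve(t)-U_0(t))|^2+C\int_{\Omega_\ve}|u_\ve(t)-U_0(t)|^2$; integrating in $t$ and using the uniform $L^2$-bound on $Du_\ve$ together with Step 2 concludes the proof. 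I expect Step 1 to be the crux: the domains $\Omega_\ve$ and the pivot spaces $X_\ve^*$ depend on $\ve$, so the key is to exploit the \emph{$\ve$-uniform} boundedness of ${\C A}_\ve$ and ${\C G}_\ve$ (supplied precisely by (\ref{3}) and (\ref{boundforG})) in order to place all the time derivatives $\partial_t\hat u_\ve$ into the single fixed space $L^2(0,T;X^*)$ before invoking compactness; once that is done the remaining steps are routine. (An energy / lower-semicontinuity argument based on (\ref{lowersemicont1})--(\ref{lowersemicont2}) is a possible alternative, but the compactness route seems cleaner and self-contained.)
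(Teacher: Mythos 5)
Your proposal is correct, but it takes a genuinely different route from the paper. The paper proves relative compactness of the extended solutions $(u_\ve)$ in $L^2(\Omega\times[0,T])$ directly: it expands $u_\ve(t)$ in the Neumann eigenfunctions of $\Omega_\ve$, uses the first bound in (\ref{uniformaprioriparabolic1}) to get H\"older-$1/2$ equicontinuity in time of the low Fourier modes (placing the truncated sums in compacts of $C^{1/2}([0,T];X)$), and controls the tail uniformly in $\ve$ by invoking the (quoted as well known) convergence $\omega_\ve^{(k)}\to\omega^{(k)}$ of the Neumann eigenvalues of the perforated domains. You instead work only with the a priori bounds already at hand: the uniform estimate on $\partial_t u_\ve$ in $L^2(0,T;X^*_\ve)$ (your Step 1 essentially re-derives the first bound in (\ref{uniformaprioriparabolic1}), so you could simply quote it, together with the identification $\hat u_\ve\rightharpoonup|Y^*|U_0$ in $L^2(0,T;L^2(\Omega))$ which the paper establishes just before the lemma), transferred to $\partial_t\hat u_\ve$ in the single fixed space $L^2(0,T;X^*)$; then Aubin--Lions--Simon compactness in $L^2(0,T;X^*)$ via the compact embedding $L^2(\Omega)\hookrightarrow X^*$; then the strong--weak pairing to get $\int_0^T\!\int_{\Omega_\ve}u_\ve^2\to|Y^*|\int_0^T\!\int_\Omega U_0^2$ and hence $\|u_\ve-U_0\|_{L^2(\Omega_\ve\times[0,T])}\to0$; and finally the rescaled cell-wise inequality $\int_G|w|^2\le C\bigl(\int_Y|Dw|^2+\int_{Y^*}|w|^2\bigr)$, summed over the cells, to carry the convergence into the holes. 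All of these steps check out (the weak-$*$ convergence $\chi_\ve\rightharpoonup|Y^*|$ and the cell inequality are standard, and the $X^*$-valued time derivative of $\hat u_\ve$ is legitimately identified through the equation tested with restrictions $\phi|_{\Omega_\ve}$). What your route buys is self-containedness: no spectral homogenization input is needed, only Rellich, Schauder and the classical compactness lemma, with the $\ve$-dependence of the domains reduced to an elementary Poincar\'e-type bound. What the paper's route buys is a slightly stronger conclusion (relative compactness of the whole extended sequence in $L^2(\Omega\times[0,T])$, with explicit approximating compacts) without introducing the negative space $X^*$, at the price of relying on eigenvalue convergence for the Neumann problem in perforated domains.
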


\begin{proof} By (\ref{2scale1}) it suffices to establish the
(relative) compactness of $(u_\ve)$ in $L^2(\Omega\times [0,T])$.
This is achieved by constructing a sequence of compacts $K_k$
($k=1,2,\dots$) in $L^2(\Omega\times [0,T])$ such that
$\lim_{k\to\infty}\limsup_{\ve\to 0} {\rm dist}_{L^2(\Omega\times
[0,T])}(u_\ve, K_k)=0$.

Let $0=\omega_\ve^{(0)}<\omega_\ve^{(1)}\leq\dots\leq
\omega_\ve^{(j)}\leq\dots$ be the spectrum of the Neumann
eigenvalue problem
\[
\begin{cases}
-\Delta \phi=\omega\phi\ \text{in}\ \Omega_\ve\\
\frac{\partial \phi}{\partial \nu}=0\ \text{on}\
\partial\Omega_\ve.
\end{cases}
\]
The eigenfunctions $\phi_\ve^{(j)}$ can be chosen to form an
orthogonal basis of $L^2(\Omega_\ve)$, then
\[
u_\ve(t)=\sum_{j=0}^\infty f_\ve^{(j)}(t)P_\ve\phi_\ve^{(j)},\
\text{where}\ f_\ve^{(j)}(t)=\langle
u_\ve(t),\phi_\ve^{(j)}\rangle_\ve.
\]
Moreover $\phi_\ve^{(j)}/(\omega_\ve^{(j)}+1)^{1/2}$ form an
orthonormal basis in $X_\ve(=W^{1,2}(\Omega_\ve)$, hence
\begin{equation}
\sum_{j=0}^\infty
(1+\omega_\ve^{(j)})\int_0^T|f_\ve^{(j)}(t)|^2{\rm d} t
=\|u_\ve\|^2_{L^2(0,T;X_\ve)}\leq \|u_\ve\|^2_{L^2(0,T;X)}\leq C.
\label{fourier}
\end{equation}

 It is well known that $\omega_\ve^{(k)}\to \omega^{(k)}$ as
$\ve\to 0$, where $0=\omega^{(0)}<\omega^{(1)}\leq\dots\leq
\omega^{(j)}\leq\dots$ is the discrete spectrum of a homogenized
problem. By the first bound in (\ref{uniformaprioriparabolic1}) we
have $|f_\ve^{(j)}(t)-f_\ve^{(j)}(t^\prime)|\leq
C|t-t^\prime|^{1/2}\|\phi_\ve^{(j)}\|_{X_\ve}=
C|t-t^\prime|^{1/2}(1+\omega_\ve^{(j)})^{1/2}$ for all $t,t^\prime
\in [0,T]$. It follows that, for every $k$ fixed, the sequence
$(u_\ve^{(k)}:=\sum_{j=0}^kf_\ve^{(j)}(t)P_\ve\phi_\ve^{(j)})$ is
in a bounded closed  subset $K_k$ of  $C^{1/2}([0,T];X)$. Clearly
$K_k$ is a compact set in $L^2(\Omega\times [0,T])$. On the other
hand, due to the properties of the extension operator $P_\ve$,
$$
\|u_\ve-u_\ve^{(k)}\|^2_{L^2(\Omega\times [0,T])}\leq C\int_0^T
\|u_\ve-u_\ve^{(k)}\|^2_{L^2(\Omega_\ve)}{\rm
d}t=C\sum_{j=k+1}^\infty\int_0^T|f_\ve^{(j)}(t)|^2{\rm d} t,
$$
therefore, in view of (\ref{fourier}), $\limsup_{\ve\to 0} {\rm
dist}_{L^2(\Omega\times [0,T])}(u_\ve, K_k)\leq \limsup_{\ve\to 0}
\|u_\ve-u_\ve^{(k)}\|_{L^2(\Omega\times [0,T])}\leq
 C/\omega^{(k+1)}\to
0$ as $k\to\infty$.
\end{proof}

Now, set $V_0(x,t)\in C^{\infty}(\overline{\Omega}\times [0,T])$,
$V_1(x,y,t)\in C^{\infty}(\overline{\Omega}\times
\overline{Y}\times [0,T])$ with $V_1(x,y,t)$ being $Y$-periodic in
$y$, set $v_\ve =V_0(x,t)+\ve V_{1}(x,x/\ve,t)$, and using the
test function $w_\ve =u_\ve-v_\ve$ in (\ref{parabolic}) we obtain
\begin{multline}
\frac{1}{2}\langle u_\ve(T),u_\ve(T)\rangle_{\ve}- \frac{1}{2}
\langle\tilde u,\tilde u\rangle_{\ve}-\langle
u_\ve(T),v_\ve(T)\rangle_{\ve}+\langle\tilde
u,v_\ve(0)\rangle_{\ve}
\\
+\int_{0}^{T} \langle u_\ve(t),\partial_t
v_\ve(t)\rangle_{\ve}{\rm d}t +\int_{0}^{T} \langle{\C
A}_\ve(u_\ve(t)),w_\ve(t)\rangle_{\ve}{\rm d}t-
\int_{0}^{T}\langle{\C G}_\ve(u_\ve(t)),w_\ve(t)\rangle_{\ve}{\rm d}t \\
=\int_{0}^{T} \langle f(t),w_\ve(t)\rangle_{\ve}{\rm d}t.
\label{ravenstvoparab}
\end{multline}
By using (\ref{2scale1}) and (\ref{lowersemicont1}),
(\ref{lowersemicont2}), we can take $\liminf_{\ve\to 0}$ for
various terms in (\ref{ravenstvoparab}) to get
\begin{multline}
\liminf_{\ve\to 0}\biggl( \frac{1}{2}\langle
u_\ve(T),u_\ve(T)\rangle_{\ve}- \frac{1}{2} \langle\tilde u,\tilde
u\rangle_{\ve}-\langle
u_\ve(T),v_\ve(T)\rangle_{\ve}+\langle\tilde
u,v_\ve(0)\rangle_{\ve}\biggr.\\
\biggl.+\int_{0}^{T} (\langle u_\ve(t),\partial_t
v_\ve(t)\rangle_{\ve}-\langle f(t),w_\ve(t)\rangle_{\ve}){\rm
d}t\biggr)
\\
\geq \frac{1}{2} \langle U_0(T),U_0(T)\rangle - \frac{1}{2}
\langle\tilde u,\tilde u\rangle -\langle U_0(T),V_0(T)\rangle
+\langle\tilde u,V_0(0)\rangle\\
+\int_{0}^{T} (\langle U_0(t),\partial_t V_0(t)\rangle- \langle
f(t),U_0(t)-V_0(t)\rangle){\rm d}t. \label{firstsemifinal}
\end{multline}
By (\ref{2scale2}) we also have
\begin{multline}
\lim_{\ve\to 0} \int_{0}^{T} \langle{\C
A}_\ve(v_\ve(t)),w_\ve(t)\rangle_{\ve}{\rm d}t \\
= \int_{0}^{T}\int_\Omega \int_{Y^*} a(D_xV_0+D_yV_1,y)\cdot
(D_xU_0+D_yU_1-D_xV_0-D_yV_1){\rm d}y{\rm d}x{\rm d} t.
\label{secondsemifinal}
\end{multline}
Let us show that
\begin{equation}
\int_0^T\langle \C{G}_\ve(u_\ve),u_\ve-v_\ve \rangle_\ve{\rm d}
t\to \int_0^T M(U_0,U_1,V_0,V_1){\rm d} t \ \text{as}\ \ve\to 0,
\label{thusconvergenceint}
\end{equation}
where $M(U_0,U_1,V_0,V_1)$ is given by
(\ref{middletermrepresentation}) (or, equivalently, by the r.h.s.
of (\ref{middleterm})). The proof of (\ref{thusconvergenceint})
follows closely the arguments in the end of Sec. \ref{section5}
(proof of (\ref{middleterm})). In place of Proposition
\ref{convergtraces} we make use now of


\begin{prop}
\label{convergtraces1} Assume that $q(t,x,y)\in C([0,T]\times
\Omega;L^\infty(S))$ satisfies, {\rm (a)}
$|q(t,x,y)-q(t^\prime,x^{\prime},y)|\leq
C(|x-x^\prime|+|t-t^\prime|)$ with $C>0$ independent of
$x,x^\prime \in\Omega$, $t,t^\prime \in[0,T]$
 and $y\in S$;
{\rm (b)} $q(t,x,y)$ is $Y$-periodic in $y\in S$;

\noindent {\rm (c)} $\displaystyle\int_{Y\cap S}q(t,x,y){\rm
d}\sigma_y=0$ for all $x\in\Omega$, $t\in[0,T]$.

 Then, given a sequence $w_\ve\in
L^2(0,T;W^{1,2}(\Omega))$ such that $w_\ve\to W_0$,
$D_xw_\ve(x,t)\to D_xW_0(x,t)+D_yW_1(x,y,t)$ two scale (in the
sense of (\ref{parametertwoscale}))  as $\ve\to 0$, we have
\begin{equation}
\int_0^T\int_{S_\ve}q(t,x,x/\ve)(w_\ve-\bar w_\ve){\rm
d}\sigma{\rm d} t\to \int_0^T\int_\Omega \int_{Y\cap
S}q(t,x,y)(D_xW_0\cdot y+W_1){\rm d}\sigma_y {\rm d}x{\rm d} t.
\label{traces1}
\end{equation}
\end{prop}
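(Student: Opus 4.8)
The plan is to repeat, almost verbatim, the three-step proof of Proposition \ref{convergtraces} given in Section \ref{section5}, carrying the time variable $t$ along as an extra slow variable under one additional integral $\int_0^T{\rm d}t$. The only genuinely new difficulty is that, as noted right after (\ref{parametertwoscale}), that convergence does \emph{not} guarantee two-scale convergence of $w_\ve(\cdot,t)$ for a.e.\ fixed $t$; hence one cannot apply Proposition \ref{convergtraces} slicewise and integrate, and every limit passage must be carried out directly in the joint $(x,t)$ integrals.

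First I would reduce, exactly as in the ``Final step'' of Section \ref{section5}, to the case where $S_\ve$ is replaced by $S_\ve'=\bigcup\{S_\ve\cap Y_\ve^{(m)}:Y_\ve^{(m)}\cap\Omega'\neq\emptyset\}$ for the subdomain $\Omega'=\{x\in\Omega:{\rm dist}(x,\partial\Omega)>\delta\}$: indeed, by (\ref{rescaledpoincare}) and the Cauchy--Schwarz inequality in $t$,
$$
\int_0^T\!\!\int_{S_\ve\setminus S_\ve'}|w_\ve-\bar w_\ve|\,{\rm d}\sigma\,{\rm d}t\leq C\,\delta^{1/2}\|w_\ve\|_{L^2(0,T;W^{1,2}(\Omega))}.
$$
Next I would study the linear functional $B_\ve w=\int_0^T\int_{S_\ve'}q(t,x,x/\ve)(w-\bar w)\,{\rm d}\sigma\,{\rm d}t$ on $L^2(0,T;W^{1,2}(\Omega))$: its uniform boundedness $|B_\ve w|\leq C\|w\|_{L^2(0,T;W^{1,2}(\Omega))}$ follows from (\ref{rescaledpoincare}) applied for a.e.\ $t$ together with Cauchy--Schwarz in $t$, and for $w\in C^2(\overline{[0,T]\times\Omega})$ the same cell-by-cell Taylor expansion as in Step 1 of Section \ref{section5}, now keeping the $t$-integral, gives $B_\ve w\to\int_0^T\int_{\Omega'}\int_{S\cap Y}q(t,x,y)\,D_xw(x,t)\cdot y\,{\rm d}\sigma_y\,{\rm d}x\,{\rm d}t$; hence $B_\ve$ converges weakly on all of $L^2(0,T;W^{1,2}(\Omega))$.

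For $w_\ve$ vanishing off $\Omega'$ I would mimic Step 2 of Section \ref{section5}: fix a smooth partition of unity $\{\varphi_\delta^{(\alpha)}(x)\}$ subordinate to a $\delta$-fine open cover $\{Q_\delta^{(\alpha)}\}$ of $\Omega$ and a partition $\{[t_i,t_{i+1})\}$ of $[0,T]$ of mesh $\leq\delta$, and freeze the coefficient by replacing $q(t,x,y)$ with $q(t_i,\hat x_\delta^{(\alpha)},y)$ on $[t_i,t_{i+1})\times Q_\delta^{(\alpha)}$, the resulting $O(\delta)$ error being absorbed via (\ref{rescaledpoincare}) and hypothesis (a). For each frozen coefficient, hypothesis (c) makes the Neumann cell problem (\ref{theta}) (with boundary data $q(t_i,\hat x_\delta^{(\alpha)},y)$) solvable; setting $\zeta_\ve(x)=\theta(x/\ve)$ and integrating by parts in $x$ on each time slice (using $w_\ve|_{\partial\Omega'}=0$ and $\Delta\zeta_\ve=0$) rewrites the ``$w_\ve$''-part of each localized term as $\int_{t_i}^{t_{i+1}}\int_{\Omega_\ve\cap\Omega'}D_x(w_\ve\varphi_\delta^{(\alpha)})\cdot(D_y\theta)(x/\ve)\,{\rm d}x\,{\rm d}t$, in which the limit $\ve\to 0$ is taken using the hypothesized convergence $D_xw_\ve\to D_xW_0+D_yW_1$ in the sense of (\ref{parametertwoscale}) (cf.\ (\ref{2scale2})); the ``$\bar w_\ve$''-part is handled through the weak $L^2(\Omega\times(0,T))$ convergence $\bar w_\ve\rightharpoonup W_0$ (which does follow from $w_\ve\to W_0$ in the sense of (\ref{parametertwoscale})) together with the zero-mean property (c). Letting $\ve\to0$ with $\delta$ fixed, summing over $i$ and $\alpha$, and then letting $\delta\to 0$ reproduces the right-hand side of (\ref{traces1}) over $\Omega'$.

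Finally, for a general $w_\ve$ I would use the harmonic-extension decomposition of Step 3 of Section \ref{section5}: write $w_\ve=(w_\ve-W_0-w_\ve^{(1)})+w_\ve^{(1)}+W_0$, where $w_\ve^{(1)}(\cdot,t)$ is the harmonic extension into $\Omega'$ of $w_\ve(\cdot,t)-W_0(\cdot,t)$, extended by $w_\ve-W_0$ outside $\Omega'$. Since $w_\ve-W_0\rightharpoonup 0$ weakly in $L^2(0,T;W^{1,2}(\Omega))$, interior elliptic estimates applied for a.e.\ $t$ and integrated in $t$ give $w_\ve^{(1)}\to 0$ strongly in $L^2(0,T;W^{1,2}(K))$ for every $K\Subset\Omega'$, and the estimate (\ref{30}), now with an extra $\int_0^T{\rm d}t$, yields $B_\ve w_\ve^{(1)}\to 0$; the term $W_0$ is covered by the weak convergence of $B_\ve$ established above, while $w_\ve-W_0-w_\ve^{(1)}$ (supported in $\overline{\Omega'}$, tending to $0$ two-scale, with $x$-gradient two-scale converging to $D_yW_1$) is covered by the compactly supported case. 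Together with the boundary-layer bound of the second paragraph, this gives (\ref{traces1}). As stressed above, the one non-mechanical point throughout is that all the cell-problem and harmonic-extension limit passages must be organized so as to use only the joint $(x,t)$ convergences at hand — hence the Lipschitz-in-$t$ localization and the joint weak $L^2(\Omega\times(0,T))$ convergence of $\bar w_\ve$, in place of the slicewise two-scale convergence available in the stationary case.
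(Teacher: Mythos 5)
Your reduction to the interior subdomain, the study of the functional $B_\ve$, and the localization/cell-problem argument of your Step 2 can all be carried out as you indicate; the proof breaks down, however, at your Step 3, and this is a genuine gap rather than a detail. You claim that, since $w_\ve-W_0\rightharpoonup 0$ weakly in $L^2(0,T;W^{1,2}(\Omega))$, ``interior elliptic estimates applied for a.e.\ $t$ and integrated in $t$'' give $w_\ve^{(1)}\to 0$ strongly in $L^2(0,T;W^{1,2}(K))$ for $K\Subset\Omega'$. This is false in the time-dependent setting: the slicewise harmonic extension is a fixed bounded linear operator of the boundary trace, and weak convergence in $L^2(0,T;\cdot)$ is not upgraded to strong convergence by it, because there is no compactness in the time variable (no bound on $\partial_t w_\ve$ is assumed, so no Aubin--Lions type mechanism is available). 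Concretely, take $w_\ve(x,t)=W_0(x,t)+\phi(x)\sin(t/\sqrt{\ve})$ with a fixed smooth nonconstant $\phi$: one checks (integrating by parts in $t$) that $w_\ve\to W_0$ and $D_xw_\ve\to D_xW_0$ in the sense of (\ref{parametertwoscale}) with $W_1=0$, yet $w_\ve^{(1)}(x,t)=h(x)\sin(t/\sqrt{\ve})$, where $h$ is the harmonic extension of $\phi|_{\partial\Omega'}$, so $\int_0^T\|Dw_\ve^{(1)}(t)\|_{L^2(K)}{\rm d}t\to \frac{2T}{\pi}\|Dh\|_{L^2(K)}\neq 0$. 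Hence your time-integrated version of (\ref{30}) does not yield $B_\ve w_\ve^{(1)}\to 0$ by the argument you give; to salvage that piece one would have to exploit only weak convergence (e.g.\ a cell-wise expansion of $w_\ve^{(1)}-\bar w_\ve^{(1)}$ using interior regularity of harmonic functions, which reduces $B_\ve w_\ve^{(1)}$ to a pairing of $Dw_\ve^{(1)}$ with a fixed bounded function plus a boundary-layer term), which is not what you wrote.

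The paper's proof sidesteps this difficulty entirely and is much shorter: partition $[0,T]$ into intervals $\Delta_j^{(n)}$ of length $T/n$, freeze $q$ at $t_j^{(n)}$ on each interval (the Lipschitz continuity in $t$ together with (\ref{rescaledpoincare}) gives an $O(1/n)$ remainder), and then apply the already proved stationary Proposition \ref{convergtraces} to the time-averaged functions $W_\ve=\int_{\Delta_j^{(n)}}w_\ve\,{\rm d}t$. The point your plan overlooks is stated right after (\ref{parametertwoscale}): although the parameter two-scale convergence gives nothing slicewise (as you correctly note), it does give $\int_\alpha^\beta w_\ve\,{\rm d}t\to\int_\alpha^\beta W_0\,{\rm d}t$, and correspondingly for the gradients, in the ordinary stationary two-scale sense, so the averaged $W_\ve$ is a legitimate input for Proposition \ref{convergtraces}; letting first $\ve\to 0$ and then $n\to\infty$ yields (\ref{traces1}). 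So ``slicewise'' is indeed forbidden, but ``time-averaged over subintervals'' is allowed, and this averaging is precisely the device that makes the stationary Step 3 (where weak $W^{1,2}(\Omega)$ convergence does imply strong local convergence of the harmonic extensions) applicable, avoiding the step at which your direct approach fails.
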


\begin{proof} Set
$0=t_0^{(n)}<\dots<t_j^{(n)}=Tj/n<\dots<t_n^{(n)=T}$,
$\Delta_j^{(n)}=(t_{j-1}^{(n)},t_{j}^{(n)})$, then by using
(\ref{rescaledpoincare}) and the Lipschitz continuity of
$q(t,x,y)$ in $t$ we obtain
\begin{multline}
\int_0^T\int_{S_\ve}q(t,x,x/\ve)(w_\ve-\bar w_\ve){\rm
d}\sigma{\rm d} t=\sum_{j=1}^n\int_{\Delta_j^{(n)}}
\int_{S_\ve}q(t,x,x/\ve)(w_\ve-\bar w_\ve){\rm d}\sigma{\rm d}
t
\\
= \sum_{j=1}^n\int_{S_\ve}
q(t_j^{(n)},x,x/\ve)\int_{\Delta_j^{(n)}}(w_\ve-\bar w_\ve){\rm d}
t{\rm d}\sigma +r^{(n)}_\ve, \label{riemann}
\end{multline}
with
\begin{equation}
|r^{(n)}_\ve|\leq \frac{C}{n} \int_0^T\int_{S_\ve} |w_\ve-\bar
w_\ve|{\rm d}\sigma{\rm d} t \leq
\frac{C}{n}\int_0^T\|w_\ve\|_{W^{1,2}(\Omega)}{\rm d} t.
\label{riemannremainder}
\end{equation}
Setting $W_\ve=\int_{\Delta_j^{(n)}}w_\ve{\rm d} t$ and applying
Proposition \ref{convergtraces}, we get
\begin{equation}
\label{riemannfinal}
 \lim_{\ve\to 0}\int_{S_\ve} q(t_j^{(n)},x,x/\ve)
(W_\ve-\bar W_\ve){\rm d} t{\rm d}\sigma=
\int_{\Delta_j^{(n)}}\int_\Omega \int_{Y\cap
S}q(t_j^{(n)},x,y)(D_xW_0\cdot y+W_1){\rm d}\sigma_y {\rm d}x{\rm
d} t.
\end{equation}
If we pass to the limit (along a subsequence) as $\ve\to 0$ in
(\ref{riemann})  and send $n$ to $\infty$ in the resulting relation, then by
(\ref{riemannremainder}) and (\ref{riemannfinal}) we obtain
(\ref{traces1}).
\end{proof}

\noindent {\it Proof of (\ref{thusconvergenceint}) (continued).}
By virtue of Lemma \ref{techlem1} and
(\ref{uniformaprioriparabolicextended1}) we have
\begin{multline*}
\int_0^T\langle \C{G}_\ve(u_\ve),u_\ve-v_\ve \rangle_\ve{\rm d} t=
\int_0^T \int_{S_\ve}g(\bar u_\ve,x/\ve)(u_\ve-v_\ve-\bar
u_\ve+\bar v_\ve){\rm d}\sigma{\rm d} t
\\
+ \int_0^T\int_{S_\ve}g^\prime_u(\bar u_\ve,x/\ve) (\bar
u_\ve-\bar v_\ve) (u_\ve-\bar u_\ve){\rm d}\sigma{\rm d} t
+O(\ve^{2/(N+2)}),
\end{multline*}
then, assuming that $u^{(1)}_\delta\in C^1(\overline{\Omega}\times
[0,T])$ is such that $\|U_0-u^{(1)}_\delta\|_{L^2(\Omega\times
[0,T])}\leq \delta$, we get, by using Lemma \ref{techlem2}, Lemma
\ref{lemstrongL2conver} (convergence of $u_\ve$ to $U_0$ in
$L^2(\Omega\times[0,T])$), continuity properties of $g(u,y)$ and
$g^\prime_u(u,y)$ in $u$ (conditions (\ref{4}), (\ref{5}),
(\ref{6})),  (\ref{rescaledpoincare}) and the second bound in
(\ref{uniformaprioriparabolicextended1}),
\begin{multline}
\lim_{\ve\to 0}\int_0^T\langle \C{G}_\ve(u_\ve),u_\ve-v_\ve
\rangle_\ve{\rm d} t= \lim_{\ve\to 0} \int_0^T\int_{S_\ve}\bigl(
g( u_\delta^{(1)},x/\ve)(u_\ve-v_\ve-\bar u_\ve+\bar v_\ve)
\bigr.\\
\bigl. + g^\prime_u(u_\delta^{(1)},x/\ve) (u_\delta^{(1)}-
V_0)(u_\ve-\bar u_\ve)\bigr) {\rm d}\sigma {\rm d}t  +O(\delta),
\label{pervoedeistvie111}
\end{multline}
provided that the limits exist. By using Proposition
\ref{convergtraces1} we identify the limits in the r.h.s. of
(\ref{pervoedeistvie111}) and then obtain
(\ref{thusconvergenceint}) by passing to the limit $\delta\to
0$.\hfill$\square$

\bigskip

Now, thanks to the monotonicity of the operator $\C{A}_\ve(u)$ we
can take $\liminf_{\ve\to 0}$ in (\ref{ravenstvoparab}) to obtain
by virtue of (\ref{firstsemifinal}), (\ref{secondsemifinal}),
(\ref{thusconvergenceint}) that
\begin{multline}
\int_{0}^{T} (\langle \partial_t U_0(t), U_0(t)- V_0(t)\rangle -
\langle f(t),U_0(t)-V_0(t)\rangle){\rm d}t\\
+\int_{0}^{T}\int_\Omega \int_{Y^*} a(D_xV_0+D_yV_1,y)\cdot
(D_xU_0+D_yU_1-D_xV_0-D_yV_1){\rm d}y{\rm d}x{\rm d} t\\
-\int_0^T M(U_0,U_1,V_0,V_1){\rm d} t\leq 0.
\label{varinequalityparab}
\end{multline}
This inequality is shown for any $V_0(x,t)\in
C^{\infty}(\overline{\Omega}\times [0,T])$ and any $V_1(x,y,t)\in
C^{\infty}(\overline{\Omega}\times \overline{Y}\times [0,T])$
($Y$-periodic in $y$), by an approximation argument it still holds
for any $V_0\in L^2(0,T;W^{1,2}(\Omega))$, $V_1\in
L^2(0,T;L^2(\Omega;W^{1,2}_{per}(Y)))$. Therefore we can set
$V_0=U_0$, $V_1=U_1\pm\delta\phi(x,t)w(y)$, where $w\in
W^{1,2}_{per}(Y)$, $\phi\in C^\infty(\overline{\Omega}\times
[0,T])$ and $\delta>0$ are arbitrary, divide
(\ref{varinequalityparab}) by $\delta$ and pass to the limit as
$\delta\to 0$ to get,
\begin{equation}
\int_0^T\int_\Omega\biggl(
 \int_{Y^*}  a(D_xU_0+D_yU_1,y)\cdot
D_yw{\rm d}y-\int_{S\cap Y} g(U_0,y)w{\rm d}\sigma_y \biggr)
\varphi(x,t) {\rm d}x{\rm d}t=0.
\end{equation}
This means, that $U_1$ solves (\ref{celleq}) with $u=U_0$ and
$\xi=D_x U_0$ for almost all $(x,t)\in\Omega\times [0,T]$. Now set
$V_0=U_0\pm\delta\Phi(x,t)$, $V_1=U_1$ , where $\Phi\in
C^\infty(\overline{\Omega}\times [0,T])$ and $\delta>0$ are
arbitrary, divide (\ref{varinequalityparab}) by $\delta$ and pass
to the limit as $\delta\to 0$. As a result we obtain
\begin{multline}
|Y^*|\int_{0}^{T} \int_\Omega \partial_tU_0(x,\tau) \Phi(x,\tau)
{\rm d}x{\rm d}\tau\\
+\int_{0}^{T}\int_\Omega (a^*(D_xU_0,U_0)\cdot
D_x\Phi-b^*(D_xU_0,U_0)\Phi-{\rm div}_x (g^*(U_0)\Phi)){\rm d}x
 {\rm d}\tau
\\
= |Y^*|\int_{0}^{T}\int_\Omega f(x,t)\Phi(x,\tau) {\rm d}x{\rm
d}\tau, \label{varequalityparab1}
\end{multline}
this yields (\ref{homogenizedeq2}).\hfill$\square$

\section{Properties of the homogenized problem}
\label{section7}

Define the operators ${\C A}^*, {\C B}^*, {\C T}^*: X\to X^*$ by
${\C B}^*(u)=b^*(Du,u)$,
$$
\langle{\C A}^*(u),v \rangle=\int_\Omega  a^*(Du,u)\cdot Dv {\rm
d}x,\ \forall v\in X,
$$
$$
\langle{\C T}^*(u),v\rangle
=\int_{\partial \Omega} g^*(u)\cdot
\nu\, v {\rm d}\sigma=\int_{\Omega} {\rm div}(g^*(u) v) {\rm d}x,
 \ \forall v\in X.
$$
Then, in terms of the operator $\C{F}^*(u)=\C{A}^*(u)-\C
{B}^*(u)-{\C T}^*(u)$, problems (\ref{homogenizedeq1}) and
(\ref{homogenizedeq2}) read
\begin{equation}
\C{F}^*(u)+\lambda u=f, \label{statopequation}
\end{equation}
\begin{equation}
\begin{cases}
\partial_t u+\C{F}^*(u)=f,\ t>0\\
u=\tilde u,\ \text{when}\ t=0.
\end{cases} \label{cauchyopequation}
\end{equation}
According to Theorem \ref{th1} there is a solution (obtained as
the limit of solutions of (\ref{eq1})) of (\ref{statopequation})
for every $f\in L^2(\Omega)$; similarly, by Theorem \ref{th2}
problem (\ref{cauchyopequation}) has a solution on the time
interval $[0,T]$ when  $f\in L^2(\Omega\times[0,T])$ and $\tilde
u\in L^2(\Omega)$. The solvability of problems
(\ref{statopequation}) and (\ref{cauchyopequation}) can be proved
for more general $f$, namely, we can assume merely $f\in X^*$ and
$f\in L^2(0,T;X^*)$ in (\ref{statopequation}) and
(\ref{cauchyopequation}), respectively. However we will focus
on the uniqueness results.

\bigskip

\noindent {\bf 1}({\it Properties of $a^*$ and $b^*$}). First we
show

\begin{lem}
\label{coefficients}
 The functions $a^*$ and $b^*$ given by (\ref{astar}), (\ref{bstar}) are
continuous. Moreover, there are constants $\gamma,\alpha,r>0$ and
$C$
such that
\begin{equation}
\label{boundsastar1} a^*(\xi,u)\cdot \xi\geq
\gamma|\xi|^2-C(|u|^2+1) \ \text{and}\ |a^*(\xi,u)|\leq
C(|\xi|+|u|+1),
\end{equation}
\begin{equation}
\label{boundsastar2} (a^*(\xi,u)-a^*(\zeta,v))\cdot(\xi-\zeta)\geq
\alpha |\xi-\zeta|^2-r(u-v)^2,
\end{equation}
$|b^*(\xi,u)|\leq C(|\xi|+|u|+1)$ and
\begin{multline}
\label{boundsbstar}  (b^*(\xi,u)-b^*(\zeta,v))(v-u)\leq
\frac{1}{4}(a^*(\xi,u)-a^*(\zeta,v))\cdot(\xi-\zeta))\\
+C\bigl(|u-v|^2+|u-v|^2(|\xi|+|u|+1)/(1+|u-v|)\bigr).
\end{multline}
\end{lem}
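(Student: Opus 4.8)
The plan is to read off all of these properties from the well-posedness and elementary estimates of the cell problem (\ref{celleq}). First I would fix the normalisation $\int_{Y^*}w\,{\rm d}y=0$ and observe that (\ref{celleq}) is the monotone operator equation: find $w$ in $\C{V}:=\{v\in W^{1,2}_{per}(Y^*):\int_{Y^*}v\,{\rm d}y=0\}$ such that $\int_{Y^*}a(\xi+D_yw,y)\cdot D_y\phi\,{\rm d}y=\int_{S\cap Y}g(u,y)\phi\,{\rm d}\sigma_y$ for all $\phi\in\C{V}$. By (i)--(iii) the underlying operator is continuous and strictly monotone, and — using (\ref{7}), (\ref{3}), (\ref{4}) and the Poincar\'e inequality (\ref{poincare_per}) — coercive on $\C{V}$, so that $w=w(\,\cdot\,;\xi,u)$ exists and is unique. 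Testing with $w$ and invoking (\ref{3}), (\ref{4}), (\ref{7}), (\ref{poincare_per}) yields the a priori bound $\|w\|_{W^{1,2}(Y^*)}\leq C(|\xi|+|u|+1)$, which gives $|a^*(\xi,u)|\leq C(|\xi|+|u|+1)$ by (\ref{3}) and, since $|g^\prime_u|\leq C_7$ by (\ref{5}) and $\|w\|_{L^2(S\cap Y)}\leq C\|D_yw\|_{L^2(Y^*)}$ by (\ref{poincare_per}), also $|b^*(\xi,u)|\leq C(|\xi|+|u|+1)$. Note that $a^*$ and $b^*$ do not depend on the normalisation of $w$, the latter because $\int_{S\cap Y}g^\prime_u(u,y)\,{\rm d}\sigma_y=\frac{{\rm d}}{{\rm d}u}\int_{S\cap Y}g(u,y)\,{\rm d}\sigma_y=0$ by (\ref{7}). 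Continuity of $a^*$ and $b^*$ I would deduce from continuity of $(\xi,u)\mapsto w(\,\cdot\,;\xi,u)$: a subsequence of $w_n:=w(\,\cdot\,;\xi_n,u_n)$ (bounded by the a priori estimate) converges weakly in $W^{1,2}(Y^*)$, Minty's monotonicity trick together with uniqueness identifies the limit as $w(\,\cdot\,;\xi_\infty,u_\infty)$, and testing the difference of the cell equations with $w_n-w_\infty$ (using (\ref{5})) upgrades this to strong convergence of $D_yw_n$; then $a(\xi_n+D_yw_n,\cdot)\to a(\xi_\infty+D_yw_\infty,\cdot)$ in $L^1(Y^*)$ by (i), (\ref{3}) and Vitali's theorem, while for $b^*$ one also uses $\|g^\prime_u(u_n,\cdot)-g^\prime_u(u_\infty,\cdot)\|_{L^\infty(S)}\to0$, which follows from (\ref{6}).

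Write $w_\xi:=w(\,\cdot\,;\xi,u)$, $w_\zeta:=w(\,\cdot\,;\zeta,v)$ and $\psi:=w_\xi-w_\zeta$. Subtracting the weak forms of the two cell problems tested with $\psi\in\C{V}$ and using $(\xi-\zeta)+D_y\psi=(\xi+D_yw_\xi)-(\zeta+D_yw_\zeta)$ gives the key identity
\begin{multline*}
(a^*(\xi,u)-a^*(\zeta,v))\cdot(\xi-\zeta)
=\int_{Y^*}(a(\xi+D_yw_\xi,y)-a(\zeta+D_yw_\zeta,y))\cdot((\xi-\zeta)+D_y\psi)\,{\rm d}y\\
-\int_{S\cap Y}(g(u,y)-g(v,y))\psi\,{\rm d}\sigma_y.
\end{multline*}
By (\ref{monoton}) the first integral is at least $\kappa\|(\xi-\zeta)+D_y\psi\|_{L^2(Y^*)}^2$, and by (\ref{5}), (\ref{7}), (\ref{poincare_per}) the surface term is bounded in modulus by $C|u-v|\,\|D_y\psi\|_{L^2(Y^*)}$. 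Combining these with $\|D_y\psi\|\leq\|(\xi-\zeta)+D_y\psi\|+|Y^*|^{1/2}|\xi-\zeta|$, Young's inequality, and the purely geometric inequality $\|\B{e}+D_y\phi\|_{L^2(Y^*)}^2\geq c_0|\B{e}|^2$ — valid for all $\B{e}\in\D{R}^N$, $\phi\in W^{1,2}_{per}(Y^*)$, since $Y^*$ is connected — I obtain (\ref{boundsastar2}) with $\alpha=\kappa c_0/4$. The coercivity of $a^*$ in (\ref{boundsastar1}) then follows by taking $\zeta=v=0$ in (\ref{boundsastar2}) and using $|a^*(0,0)|\leq C$.

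The main obstacle is (\ref{boundsbstar}). I would split $b^*(\xi,u)-b^*(\zeta,v)=S_A+S_B$ with $S_A=\int_{S\cap Y}g^\prime_u(v,y)\psi\,{\rm d}\sigma_y$, $S_B=\int_{S\cap Y}(g^\prime_u(u,y)-g^\prime_u(v,y))w_\xi\,{\rm d}\sigma_y$. For $S_B$, (\ref{6}) gives $|g^\prime_u(u,y)-g^\prime_u(v,y)|\leq C_8|u-v|(1+|u|+|v|)^{-1}$, so with $\|w_\xi\|_{L^2(S\cap Y)}\leq C(|\xi|+|u|+1)$ and $1+|u|+|v|\geq1+|u-v|$ one gets $|(v-u)S_B|\leq C|u-v|^2(|\xi|+|u|+1)/(1+|u-v|)$. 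For $S_A$, Cauchy--Schwarz together with $|g^\prime_u|\leq C_7$, $\int_{S\cap Y}g^\prime_u(v,y)\,{\rm d}\sigma_y=0$ and (\ref{poincare_per}) yields $|(v-u)S_A|\leq C|u-v|\,\|D_y\psi\|_{L^2(Y^*)}$. The decisive auxiliary bound is
\[
\|D_y\psi\|_{L^2(Y^*)}^2\leq C\,(a^*(\xi,u)-a^*(\zeta,v))\cdot(\xi-\zeta)+C|u-v|^2,
\]
which comes out of the same identity: it gives $\kappa\|(\xi-\zeta)+D_y\psi\|^2\leq(a^*(\xi,u)-a^*(\zeta,v))\cdot(\xi-\zeta)+C|u-v|\,\|D_y\psi\|$, whence $\|D_y\psi\|^2$ is controlled after using $\|D_y\psi\|^2\leq2\|(\xi-\zeta)+D_y\psi\|^2+2|Y^*||\xi-\zeta|^2$, Young's inequality, and (\ref{boundsastar2}) to estimate $|\xi-\zeta|^2$. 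Plugging this into the estimate for $(v-u)S_A$ and applying Young's inequality with the constant matched to the one in the auxiliary bound produces $(v-u)S_A\leq\frac14(a^*(\xi,u)-a^*(\zeta,v))\cdot(\xi-\zeta)+C|u-v|^2$; adding the estimate for $(v-u)S_B$ gives (\ref{boundsbstar}). The reason the whole argument is organised around the weak formulations, rather than pointwise manipulations of $a$, is that $a(\xi,y)$ is only assumed continuous — not Lipschitz — in $\xi$, so there is no direct bound on $\|D_y(w_\xi-w_\zeta)\|$ in terms of $|\xi-\zeta|$; the quantity $\|D_y\psi\|^2$ has to be absorbed into $(a^*(\xi,u)-a^*(\zeta,v))\cdot(\xi-\zeta)$ itself, which is what dictates the coefficient $\frac14$ and the particular shape of the right-hand side of (\ref{boundsbstar}).
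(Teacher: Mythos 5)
Your proposal is correct and takes essentially the same route as the paper: the identity obtained by subtracting the two cell problems tested with $\hat w=w(\cdot;\xi,u)-w(\cdot;\zeta,v)$, the Poincar\'e estimate of the surface term, and the ellipticity inequality (\ref{bounneumannhomogenization}) give (\ref{boundsastar2}) together with the control of $\|D_y\hat w\|^2_{L^2(Y^*)}$ by $(a^*(\xi,u)-a^*(\zeta,v))\cdot(\xi-\zeta)+C|u-v|^2$ (which the paper keeps as the extra term $\beta\int_{Y^*}|D\hat w|^2$ in Lemma \ref{celllem}(c)), and your splitting of $b^*(\xi,u)-b^*(\zeta,v)$ handled via (\ref{5})--(\ref{7}) coincides with (\ref{boundBstar}). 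The remaining differences (deducing the coercivity in (\ref{boundsastar1}) from (\ref{boundsastar2}) at $\zeta=v=0$, and the Minty-type continuity argument) are only cosmetic reorganizations of the paper's argument.
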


The proof of this Lemma is based on the study of properties of
solutions $w(y;\xi,u)$ of problem (\ref{celleq}). We will make use
of the following well-known results,
\begin{equation}
\int_{S\cap Y}\Bigl|w-\frac{1}{|Y^*|}\int_{Y^*} w{\rm
d}x\Bigr|^2\,{\rm d}\sigma\leq C \int_{Y^*}|Dw|^2\, {\rm d}x,
\label{poincare_per}
\end{equation}
\begin{equation}
 \int_{Y^*}|D_y w+\xi|^2\,{\rm d} y\geq \rho |\xi|^2,\ \rho >0,
\label{bounneumannhomogenization}
\end{equation}
for all $\xi\in \D{R}^N$, $w\in W^{1,2}_{per}(Y^*)$, where $C$ and
$\rho$ are independent of $w$ and $\xi$.



\begin{lem} \label{celllem} For any $\xi\in \D{R}^N$, $u\in\D{R}$ there is
a unique
(modulo an additive constant) solution $w(y;\xi,u)$ of problem
(\ref{celleq}) and we have
\begin{itemize}
\item[{\rm (a)}] $\displaystyle \int_{Y^*}|D_y w(y;\xi,u)|^2{\rm
d} y\leq C(|\xi|^2+|u|^2+1)$, \item[{\rm (b)}]
$a^*(\xi,u)\cdot\xi\geq \gamma|\xi|^2-C(|u|\,|\xi|+|u|^2+1)$ (with
$\gamma>0$), \item[{\rm(c)}] there are $\alpha,\, \beta
>0$ and $r$ such that, for any $\xi,\, \zeta\in\D{R}^N$ and $u,\,
v\in \D{R}$
$$ (a^*(\xi,u)-a^*(\zeta,v))\cdot(\xi-\zeta)\geq \alpha
|\xi-\zeta|^2-r(u-v)^2+\beta\int_{Y^*} |D\hat w|^2{\rm d} y,
$$
where $\hat w=w(y;\xi,u)-w(y;\zeta,v)$, \item[{\rm(d)}]
$w(y;\zeta,v)\to w(y;\xi,u)$ strongly in
$W^{1,2}_{per}(Y^*)\setminus \D{R}$ when $\zeta\to\xi, \ v\to u$.
\end{itemize}
\end{lem}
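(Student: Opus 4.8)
The plan is to run everything off the weak formulation of the cell problem (\ref{celleq}): $w\in W^{1,2}_{per}(Y^*)$ (determined up to an additive constant, normalised to have zero mean over $Y^*$) with
$$
\int_{Y^*} a(\xi+D_y w,y)\cdot D_y\varphi\,{\rm d}y=\int_{S\cap Y}g(u,y)\varphi\,{\rm d}\sigma_y,\qquad\forall\,\varphi\in W^{1,2}_{per}(Y^*).
$$
Taking $\varphi\equiv1$ shows the compatibility condition for this Neumann-type problem is precisely the centering assumption (\ref{7}). By (\ref{monoton}) and (\ref{3}) the map $w\mapsto-{\rm div}\,a(\xi+D_y w,y)$ is strictly monotone, coercive and continuous on the quotient $W^{1,2}_{per}(Y^*)/\D{R}$ — coercivity uses the Poincar\'e inequality (\ref{poincare_per}) together with (\ref{bounneumannhomogenization}) — so the Browder--Minty theorem gives existence and uniqueness modulo constants.

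For (a) I would test with $\varphi=w$, write $\int a(\xi+D_y w)\cdot D_y w=\int a(\xi+D_y w)\cdot(\xi+D_y w)-\int a(\xi+D_y w)\cdot\xi$, estimate the first term from below by (\ref{3}) and Young, the second from above by (\ref{3}) and Young, and bound the right-hand side as $\bigl|\int_{S\cap Y}g(u,y)(w-\tfrac1{|Y^*|}\int_{Y^*}w)\,{\rm d}\sigma_y\bigr|\le C(|u|+1)\|D_y w\|_{L^2(Y^*)}$ using (\ref{4}), (\ref{7}), (\ref{poincare_per}); a final Young absorption of $\|D_y w\|$ yields (a). For (b) I would compute $a^*(\xi,u)\cdot\xi=\int_{Y^*}a(\xi+D_y w,y)\cdot\xi\,{\rm d}y$, again split $\xi=(\xi+D_y w)-D_y w$, apply (\ref{3}) and (\ref{bounneumannhomogenization}) to $\int a(\xi+D_y w)\cdot(\xi+D_y w)$, and reuse the boundary bound together with (a) for $\int a(\xi+D_y w)\cdot D_y w=\int_{S\cap Y}g(u,y)w\,{\rm d}\sigma_y$; one more Young removes the superfluous linear-in-$|\xi|$ term.

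The heart of (c): with $w_1=w(\cdot;\xi,u)$, $w_2=w(\cdot;\zeta,v)$, $\hat w=w_1-w_2$, test the difference of the two weak formulations with $\hat w$ and use $\xi-\zeta=(\xi+D_y w_1)-(\zeta+D_y w_2)-D_y\hat w$ to get
$$
(a^*(\xi,u)-a^*(\zeta,v))\cdot(\xi-\zeta)=\int_{Y^*}(a(\xi{+}D_y w_1,y)-a(\zeta{+}D_y w_2,y))\cdot((\xi{+}D_y w_1)-(\zeta{+}D_y w_2))\,{\rm d}y-\int_{S\cap Y}(g(u,y)-g(v,y))\hat w\,{\rm d}\sigma_y .
$$
The bulk term is $\ge\kappa\int_{Y^*}|\xi-\zeta+D_y\hat w|^2\,{\rm d}y$ by (\ref{monoton}); I would split this convexly as $\kappa\theta(\cdot)+\kappa(1-\theta)(\cdot)$, estimating the $\theta$-part by (\ref{bounneumannhomogenization}) to produce $\alpha|\xi-\zeta|^2$ and the $(1-\theta)$-part via $|\xi-\zeta+D_y\hat w|^2\ge\tfrac12|D_y\hat w|^2-|\xi-\zeta|^2$, choosing $\theta\in(\tfrac{|Y^*|}{\rho+|Y^*|},1)$ so the net $|\xi-\zeta|^2$ coefficient stays positive; this yields the $\beta\int_{Y^*}|D\hat w|^2$ term. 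The boundary term is $\le C|u-v|\,\|D_y\hat w\|_{L^2(Y^*)}$ by (\ref{5}), (\ref{7}), (\ref{poincare_per}), and Young against the $\beta$-term leaves $-r(u-v)^2$, giving (c).

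For (d) I would argue along subsequences: by (a) the $w(\cdot;\zeta_n,v_n)$ are bounded in $W^{1,2}_{per}(Y^*)$ for $(\zeta_n,v_n)$ near $(\xi,u)$, so up to a subsequence $w_n\rightharpoonup w_*$ in $W^{1,2}_{per}(Y^*)$, strongly in $L^2(Y^*)$ and — by compactness of the trace — in $L^2(S\cap Y)$, while $a(\zeta_n+D_y w_n,\cdot)\rightharpoonup\chi$ in $L^2(Y^*)$ by (\ref{3}). Passing to the limit identifies $\int_{Y^*}\chi\cdot D_y\varphi=\int_{S\cap Y}g(u,y)\varphi\,{\rm d}\sigma_y$, and Minty's monotonicity trick — expanding $\int_{Y^*}(a(\zeta_n+D_y w_n,y)-a(\zeta_n+D_y\psi,y))\cdot(D_y w_n-D_y\psi)\,{\rm d}y\ge0$, using $\int a(\zeta_n+D_y w_n)\cdot D_y w_n=\int_{S\cap Y}g(v_n,y)w_n\to\int_{S\cap Y}g(u,y)w_*$ and the strong $L^2$-convergence $a(\zeta_n+D_y\psi,\cdot)\to a(\xi+D_y\psi,\cdot)$ from (i), (\ref{3}) and dominated convergence — shows $w_*$ solves (\ref{celleq}), hence $w_*=w(\cdot;\xi,u)$ by uniqueness. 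Strong convergence then follows from $\kappa\|D_y(w_n-w_*)\|^2\le\int_{Y^*}(a(\zeta_n+D_y w_n,y)-a(\zeta_n+D_y w_*,y))\cdot D_y(w_n-w_*)\,{\rm d}y$, whose right-hand side $\to0$ by the same ingredients; as every subsequence has a sub-subsequence with the same limit, the whole family converges. Continuity of $a^*$ and $b^*$ from (\ref{astar}), (\ref{bstar}) is then immediate (dominated convergence, plus continuity of $g'_u$ for $b^*$). I expect the passage to the limit in the nonlinear flux $a(\zeta_n+D_y w_n,y)$ in (d) — the Minty argument — to be the main technical point; the rest is energy estimates and the quotient-space Poincar\'e inequality.
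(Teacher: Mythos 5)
Your argument is correct, and for the existence/uniqueness part and items (a)--(c) it is essentially the paper's proof: the same energy identity obtained by testing with $w$ (the paper's (\ref{pochastyam})), the same use of (\ref{4}), (\ref{5}), (\ref{7}) with the quotient Poincar\'e inequality (\ref{poincare_per}) to handle the surface term, and for (c) the same identity as (\ref{momotonL_1a}) followed by the same convex splitting of $\kappa\int_{Y^*}|\xi-\zeta+D_y\hat w|^2\,{\rm d}y$ between (\ref{bounneumannhomogenization}) and the elementary inequality producing the $\beta\int_{Y^*}|D\hat w|^2$ term (your parameter $\theta$ plays the role of the paper's $1-\delta$), with Young absorbing the boundary term into $-r(u-v)^2$. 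The only genuine divergence is in (d): you run a full compactness/Minty scheme (weak limits, trace compactness, identification of the nonlinear flux, then strong convergence via monotonicity), which is correct but heavier than needed, since for fixed smooth test functions the continuity of $\xi\mapsto a(\xi,\cdot)$ in $L^\infty(Y)$ must be invoked locally uniformly as you do. The paper instead reads (d) off directly from (a) and (c): by (c), $\beta\int_{Y^*}|D\hat w|^2\,{\rm d}y\le (a^*(\xi,u)-a^*(\zeta,v))\cdot(\xi-\zeta)+r(u-v)^2$, and the growth bound $|a^*|\le C(|\xi|+|u|+1)$ coming from (\ref{3}) and (a) makes the right-hand side vanish as $\zeta\to\xi$, $v\to u$, giving strong convergence in one line. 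Your route has the mild advantage of not relying on the quantitative form of (c) and would survive weaker monotonicity hypotheses, but under the paper's assumptions the short argument suffices; your closing remarks on continuity of $a^*$, $b^*$ belong to Lemma \ref{coefficients} rather than to this statement, though they are consistent with it.
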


\begin{proof} The existence of a unique solution of (\ref{celleq})
in $W^{1,2}_{per}(Y^*)\setminus \D{R}$ easily follows from
assumptions (i)-(iii) and (vi) on the functions $a$ and $g$. To
show (a) we derive from (\ref{celleq}) by integrating by parts
\begin{equation}
\int_{Y^*} a(\xi+Dw,y)\cdot (\xi +Dw) {\rm d} y =\int_{S\cap Y}
g(u,y)w{\rm d}\sigma +\int_{Y^*} a(\xi+Dw,y)\cdot \xi {\rm d} y
\label{pochastyam}
\end{equation}
By applying the Poincar\'e inequality (\ref{poincare_per}) and
taking into account (\ref{7}), (\ref{4})   we obtain that for any
$k>0$,
\begin{multline}
\int_{Y^*} a(\xi+Dw,y)\cdot (\xi +Dw) {\rm d} y\leq
C(|u|+1)\|Dw\|_{L^2(Y^*)}+C|\xi|\,\|\xi+Dw\|_{L^2(Y^*)}
\\
\leq C(|u|+1)(\|\xi+Dw\|_{L^2(Y^*)}+|\xi|) +C|\xi|
\,\|\xi+Dw\|_{L^2(Y^*)}
\\
\leq k((|u|+1)^2+\frac{C}{k}(|\xi|^2+\|\xi+Dw\|^2_{L^2(Y^*)}),
\label{finbound}
\end{multline}
where $C$ is independent of $k$, $u$ and $\xi$. If we choose $k$
in (\ref{finbound}) large enough and use (\ref{3}) we get
$$
\int_{Y^*} |\xi+Dw|^2{\rm d} y\leq C (|u|^2+|\xi|^2+1),
$$
that in turn implies (a).

By using (\ref{bounneumannhomogenization}) on the l.h.s. of
(\ref{pochastyam}) and (\ref{poincare_per}) in conjunction with
(\ref{4}), (\ref{7}) in the first term of the r.h.s., we easily
derive (b).

In order to show (c) we  use (\ref{celleq}) to get by integrating
by parts
\begin{multline}
(a^*(\xi,u)-a^*(\zeta,v))\cdot(\xi-\zeta)=
 \int_{S\cap Y} (g(v,y)-g(u,y))\hat w {\rm d} \sigma
 \\
 +
 \int_{Y^*}
 (a(\xi+D_y w(y;\xi,u))-a(\zeta+D_y w(y;\zeta,v)))
 \cdot
 (\xi-\zeta +D_y \hat w){\rm d} y.
 \label{momotonL_1a}
 \end{multline}
Taking into account (\ref{5}), (\ref{7}) and applying
(\ref{poincare_per}) we can estimate the first term $I_1$ on the
r.h.s. of (\ref{momotonL_1a}) as
\begin{equation}
|I_1|\leq k|u-v|^2+\frac{C}{k}\int_{Y^*} |D\hat w|^2{\rm d} y, \
\text{for any} \ r>0, \label{promezhdeistvie}
\end{equation}
where $C$ is independent of $k$, $\xi$, $\zeta$, $u$, $v$. In view
of (\ref{monoton}) and  (\ref{bounneumannhomogenization}) we have
the following lower bound for the second term $I_2$ in
(\ref{momotonL_1a})
$$
I_2\geq (1-\delta)\kappa \rho|\xi-\zeta|^2+\delta\kappa\int_{Y^*}
|\xi-\zeta+D_y \hat w|^2{\rm d} y
$$
with $0<\delta<1$ to be chosen later. On the other hand, by the
elementary inequality $a^2\leq 2(a+b)^2+2b^2$,
$$
\int_{Y^*}|D_y \hat w|^2{\rm d} y\leq 2 \int_{Y^*}|\xi-\zeta+D_y
\hat w|^2{\rm d} y+2|\xi-\zeta|^2,
$$
thus
$$
I_2\geq \kappa(\rho-\delta(\rho+1))
|\xi-\zeta|^2+\frac{\delta\kappa}{2}\int_{Y^*} |D_y \hat w|^2{\rm
d} y.
$$
Choose $0<\delta<1$ so that $\rho-\delta(\ae+1)>0$ and set $k=4 C
/(\delta\kappa)$ (where $C$ is the constant appearing in
(\ref{promezhdeistvie})), we thus obtain (b) with
$\alpha=\kappa(\rho-\delta(\rho+1))>0$,
$\beta=(\delta\kappa)/4>0$.

Finally, statement (d) is a direct consequence of (a) and (c).
\end{proof}

\noindent {\it Proof of Lemma} \ref{coefficients}. According to
Lemma \ref{celllem} it suffices only to show (\ref{boundsbstar}).
Set $\hat w=w(y;\xi,u)-w(y;\zeta,v)$, we have by using
(\ref{poincare_per}) and assumptions (i), (iii), (iv) on $g$,
\begin{multline}
\label{boundBstar} (b^*(\xi,u)-b^*(\zeta,v))(v-u)= (v-u)
\int_{S\cap Y}
g^\prime_u(v,y)\hat w {\rm d} \sigma_y \\
+ (v-u) \int_{S\cap Y} (g^\prime_u(u,y)-g^\prime_u(v,y))w(y;\xi,u)
{\rm d} \sigma_y
\\
\leq C|u-v|\|D\hat w\|_{L^2(Y^*)} +
C|u-v|^2\|Dw(\,\cdot\,;\xi,u)\|_{L^2(Y^*)}/(1+|u|+|v|).
\end{multline}
Then statements (a) and (c) of  Lemma \ref{celllem} yield
(\ref{boundsbstar}).\hfill $\square$

\begin{rem}
\label{aremarka} In the case when the function $g(u,y)$ is linear
in $u$, bound (\ref{boundsbstar}) simplifies to the following one,
\begin{equation*}
 (b^*(\xi,u)-b^*(\zeta,v))(v-u)\leq
\frac{1}{4}(a^*(\xi,u)-a^*(\zeta,v))\cdot(\xi-\zeta)) +C|u-v|^2.
\end{equation*}
\end{rem}

Let us consider next the particular case when $a(\xi,y)$ is linear
in $\xi$, i.e.  $a$ is given by $a(\xi,y)=A(y)\xi$ with $A\in
L^\infty(Y;\D{R}^{N\times N})$, $A(y)\xi\cdot\xi\geq \kappa
|\xi|^2$ ($\kappa>0$), $\forall \xi\in\D{R}^N,\, y\in Y$. Then we
can write the solution of (\ref{celleq}) as the sum
$w(y;\xi,u)=w^{(1)}(y;\xi)+\tilde w(y;u)$ with $w^{(1)}$ solving
(\ref{celleq}) and $\tilde w$ being a unique (up to an additive
constant) solution of
\begin{equation}
\begin{cases}
{\rm div}\, (A(y)D_y \tilde w)=0\ \text{in}\ Y^*\\
A(y)D_y \tilde w\cdot\nu=g(u,y)\ \text{on}\ S\cap Y\\
\tilde w\ \text{is $Y$-periodic}.
\end{cases}
\label{celleq1tilde}
\end{equation}
Note that $w^{(1)}(y;\xi)$ depends linearly on $\xi$, also we have

$\|\tilde w(y;u)\|_{W^{1,2}(Y^*)\setminus \D{R}}\leq C(|u|+1)$,
$\|\tilde w(y;u)-\tilde w(y;v\|_{W^{1,2}(Y^*)\setminus \D{R}}\leq
C|u-v|$,

$\|\tilde w^\prime_u(y;u)-\tilde
w^\prime_u(y;v)\|_{W^{1,2)}(Y^*)\setminus \D{R}} \leq
C|u-v|/(1+|u|+|v|)$,

\noindent where $C$ is independent of $u$, $v$. The proof of these
bounds is analogous to that of (\ref{Theta1}) - (\ref{Theta3}).
Thus we have
\begin{multline}
\label{aislinear}
b^*(\xi,u)=\frac{\partial}{\partial u}
\int_{Y^*} A(y)D_y\tilde w(y;u) \cdot D_y w^{(1)}(y;\xi) {\rm d} y
\\
+ \int_{Y^*} A(y)D_y\tilde w^\prime_u(y;u) \cdot D_y \tilde w(y;u)
{\rm d} y = H^{\prime}(u)\cdot \xi+h(u)
\end{multline}
with $H$, $h$ such that $|H(u)-H(v)|\leq C|u-v|$, $|h(u)-h(v)|\leq
C|u-v|$.

\bigskip

\noindent {\bf 2}({\it Uniqueness results for problem}
(\ref{statopequation})). In the particular cases when the
dimension of the space $N\leq 3$ or $a(\xi,y)$ is linear in $\xi$
or $g(u,y)$ is linear in $u$ we show that problem
(\ref{statopequation}) cannot have two distinct solutions for
sufficiently large $\lambda$.

The following inequality will be used to estimate the expressions
involving traces on $\partial \Omega$. For every $\delta>0$ there
is $\Lambda_\delta$ such that
\begin{equation}
\label{ineqfortraces} \int_{\partial\Omega} |w|^2 {\rm d}\sigma
\leq\delta
\|Dw\|^2_{L^2(\Omega)}+\Lambda_\delta\|w\|_{L^2(\Omega)},\ \forall
w\in W^{1,2}(\Omega).
\end{equation}
This inequality is a consequence of the compactness of the trace
operator $T_{\partial\Omega}: W^{1,2}(\Omega)\to
L^2(\partial\Omega)$, $T_{\partial\Omega}u=\text{trace of}\ u\
\text{on}\
\partial\Omega$. Thanks to the Lipschitz continuity of $g(u,y)$ in the variable $u$,
inequality (\ref{ineqfortraces}) implies that
\begin{equation}
\label{boundTstar}
 |\langle \C{T}^*(u)-\C{T}^*(v), u-v
\rangle|\leq \frac{\alpha}{4}\|u-v\|_{X}^2 +C
\|u-v\|^2_{L^2(\Omega)},
\end{equation}
where $\alpha>0$ is the same as in (\ref{boundsastar2}).

Let $u$, $v$ be solutions of (\ref{statopequation}).

\smallskip

\noindent {\bf Case I} ({\it $g(u,y)$ is linear in $u$}). By using
Lemma \ref{coefficients}, Remark \ref{aremarka} and
(\ref{boundTstar}) we get
\begin{equation}
\label{finamonoton} \langle
\C{F}^*(u)-\C{F}^*(v)+\lambda(u-v),u-v\rangle\geq
\frac{\alpha}{4}\|u-v\|^2_{X}+(\lambda
-\hat\lambda_0)\|u-v\|^2_{L^2(\Omega)},
\end{equation}
with $\hat\lambda_0$ independent of $\lambda$. It follows that
$u=v$ if $\lambda\geq\hat\lambda_0$.

\noindent {\bf Case II} ({\it $a(\xi,y)$ is linear in $\xi$}). We
have, according to (\ref{aislinear}),
\begin{multline*}
\langle {B}^*(u)-\C {B}^*(v)),v-u\rangle= |Y^*|\int_\Omega
(u-v)({\rm div} (H(u)-H(v))+h(u)-h(v)) {\rm d}x
\\
=|Y^*|\int_\Omega (D(v-u)\cdot(H(u)-H(v))+(u-v)(h(u)-h(v))) {\rm
d}x
\\+|Y^*|\int_{\partial \Omega}(u-v)(H(u)-H(v))\cdot\nu\,{\rm d}\sigma
\\
\leq \frac{\alpha}{4}\|u-v\|^2_{X}+ C\|u-v\|^2_{L^2(\Omega)},
\end{multline*}
where we have used (\ref{ineqfortraces}). This inequality and
Lemma \ref{coefficients} yield (\ref{finamonoton}) (with possibly
another constant $\hat \lambda_0$).

\noindent {\bf Case III} ({\it The space dimension $N$ is two or
three}). It is well known that for these space dimensions
$X(=W^{1,2}(\Omega))$ is compactly embedded into $L^4(\Omega)$,
moreover $\|w\|_{L^4(\Omega)}^2\leq
C\delta\|w\|_{X}^2+C\delta^{-N/(4-N)}\|w\|_{L^2(\Omega)}^2$ for
all $w\in X$ and $\delta>0$, where $C$ is independent of
$\delta>0$ and $w$ (see, e.g., \cite{L}). By using this
inequality, Lemma \ref{coefficients} and (\ref{boundTstar}) we
easily show that
\begin{multline}
\langle \C{F}^*(u)-\C
{F}^*(v),u-v\rangle\geq \frac{\alpha}{4}\|u-v\|^2_{X} \\
-C(\delta
\|u-v\|^2_{X}+\delta^{-N/(4-N)}\|u-v\|^2_{L^2(\Omega)})(\|u\|_{X}+1),\
\forall \delta>0.
\label{H2monoton}
\end{multline}
On the other hand Lemma \ref{coefficients} and the very definition
of $\C{T}^*(u)$ imply that for every $w\in X$ $\langle
\C{A}^*(w),w\rangle\geq
\gamma\|w\|_{X}^2-C(\|w\|^2_{L^2(\Omega)}+1)$, $|\langle
\C{B}^*(w),w\rangle|\leq
C(\|w\|_{X}+\|w\|_{L^2(\Omega)}+1)\|w\|_{L^2(\Omega)}$ and
$|\langle \C{T}^*(w),w\rangle|\leq C\|w\|_{X}\|w\|_{L^2(\Omega)}$.
Therefore there is $\tilde\lambda_0$ such that $\langle
{F}^*(u),u\rangle\geq\frac{\gamma}{2}\|u\|^2_{X}-\tilde\lambda_0
\langle u, u\rangle$, hence, for $\lambda\geq \tilde\lambda_0$ we
have the a-priori bound $\|u\|_X\leq C(\|f\|_{X^*}+1)$ with $C$
independent of $u$, $f$ and $\lambda\geq\tilde\lambda_0$. Thus,
$u$ and $v$ being solutions of (\ref{statopequation}), estimate
(\ref{H2monoton}) yields
$$
\frac{\alpha}{4}\|u-v\|^2_{X}+\lambda \|u-v\|^2_{L^2(\Omega)} \leq
C(\|f\|_{X^*}+1)(\delta
\|u-v\|^2_{X}+\delta^{-N/(4-N)}\|u-v\|^2_{L^2(\Omega)}),
$$
and by setting $\delta=\alpha/(8 C((\|f\|_{X^*}+2))$ we get $u=v$
as far as $\lambda\geq
\hat\lambda_0$($=\max\{\tilde\lambda_0,C(\|f\|_{X^*}+1)\delta^{-N/(4-N)}\}$).
($\hat\lambda_0$ can be chosen independent of $f$ if $N=2$.)

\bigskip

\noindent {\bf 2}({\it Uniqueness results for problem}
(\ref{cauchyopequation})). Given $T>0$, we show that problem
(\ref{cauchyopequation}) cannot have two distinct solutions $u$,
$v$ on the time interval $[0,T]$ if  $a(\xi,y)$ is linear in $\xi$
or $g(u,y)$ is linear in $u$. Indeed, $w=u-v$ satisfies $
\partial_t
\langle w(t),w(t)\rangle+ 2\langle \C{F}^*(u(t))-\C
{F}^*(v(t)),u(t)-v(t)\rangle=0$, $0<t<T$, and $w(0)=0$, while
(\ref{finamonoton}) yields $-2\langle \C{F}^*(u(t))-\C
{F}^*(v(t)),u(t)-v(t)\rangle\leq C\langle w(t),w(t)\rangle$,
$0<t<T$, therefore $e^{-Ct}\|w(t)\|^2_{L^2(\Omega)}\leq 0$ so that
$w\equiv 0$.

In the case when space dimension is two we also have the
uniqueness result. Note that we have at least one solution $u\in
L^2(0,T;X)$ of (\ref{cauchyopequation}). Then, if $v$ is another
solution we set $w=u-v$, $R(t)=\langle w(t),w(t)\rangle$, and
derive by using (\ref{H2monoton}) with $\delta=\alpha/(8
C((\|u\|_{X}+1))$,
$$
R^\prime(t)-CR(t)(\|u(t)\|_X+1)^2\leq 0,\ 0<t<T, \ \text{and}\
R(0)=0.
$$
This implies that $R(t){\rm
exp}\{-C\int_0^t(\|u(\tau)\|_X+1)^2\,{\rm d}\tau\}\leq 0$ and
therefore $R\equiv 0$, i.e. $u=v$.

{\bf Acknowledgments.} The work of V.Rybalko is partially
supported by the Grant of NASU for Young Scientists. This work was
originated and partially done when V.Rybalko enjoyed the
hospitality of the Narvik University College whose support is gratefully acknowledged.

\bigskip


\end{document}